\newcommand{\op}{\operatorname}
\newcommand{\Ext}{\operatorname{Ext}}
\numberwithin{equation}{section}
\newcommand{\stb}{,\ldots,}
\newcommand{\Ann}{\operatorname{Ann}}
\newcommand{\coker}{\operatorname{coker}}
\newcommand{\iso}{\cong}
\newcommand{\im}{\operatorname{Im}}
\newcommand{\bra}{\langle}
\newcommand{\ket}{\rangle}
\newcommand{\Sq}{\operatorname{Sq}}
\newcommand{\RP}{\mathbb{R}P}
\newcommand{\Gr}{\operatorname{Gr}}
\newcommand{\OGr}{\widetilde{\operatorname{Gr}}}
\newcommand{\al}{\alpha}
\newcommand{\be}{\beta}
\newcommand{\de}{\delta}
\newcommand{\la}{\lambda}
\newcommand{\om}{\omega}
\newcommand{\se}{\subseteq}
\newcommand{\R}{\mathbb{R}}
\newcommand{\F}{\mathbb{F}}
\newtheorem{fact}{Fact}[section]
\newtheorem{lemma}[fact]{Lemma}
\newtheorem{theorem}[fact]{Theorem}
\newtheorem{defi}[fact]{Definition}
\newtheorem{exa}[fact]{Example}
\newtheorem{cla}[fact]{Claim}
\newtheorem*{sol}{\it Solution}
\newtheorem{proposition}[fact]{Proposition}
\newtheorem{corollary}[fact]{Corollary}
\newtheorem{conjecture}[fact]{Conjecture}
\newenvironment{definition}{\begin{defi} \rm}{\end{defi}}
\newenvironment{example}{\begin{exa} \rm}{\end{exa}}
\theoremstyle{remark}
\newenvironment{remark}
{\par\pushQED{\qed}\nremark \small}
{\popQED\endnremark}
\newcommand{\crk}{\mathrm{crk}}
\begin{document}

\title[Mod 2 cohomology of oriented Grassmannians]{The mod 2 cohomology rings of oriented Grassmannians via Koszul complexes}

\address{\'Akos K.\ Matszangosz, HUN-REN Alfr\'ed R\'enyi Institute of Mathematics, Re\'altanoda utca 13-15, 1053 Budapest, Hungary}
\email{matszangosz.akos@gmail.com}

\address{Matthias Wendt, Fachgruppe Mathematik und Informatik, Bergische Universit\"at Wuppertal, Gaussstrasse 20, 42119 Wuppertal, Germany}
\email{m.wendt.c@gmail.com}

\thanks{\'A. K. M. is supported by the Hungarian National Research, Development and Innovation Office, NKFIH K 138828.}

\subjclass[2010]{57T15, 55R25, 14M15}
\keywords{oriented Grassmannian, Stiefel-Whitney classes, characteristic rank, cohomology, Koszul complex}

\begin{abstract}
  We study the structure of mod 2 cohomology rings of oriented Grassmannians $\OGr_k(n)$ of oriented $k$-planes in $\R^n$. Our main focus is on the structure of the cohomology ring ${\rm H}^*(\OGr_k(n);\F_2)$ as a module over the characteristic subring $C$, which is the subring generated by the Stiefel--Whitney classes $w_2\stb w_k$. We identify this module structure using Koszul complexes, which involves the syzygies between the relations defining $C$. We give an infinite family of such syzygies, which results in a new upper bound on the characteristic rank of $\OGr_k(2^t)$, and formulate a conjecture on the exact value of the characteristic rank of $\OGr_k(n)$. For the case $k=3$, we use the Koszul complex to compute a presentation of the cohomology ring $H={\rm H}^*(\OGr_3(n);\F_2)$ for $2^{t-1}<n\leq 2^t-4$, complementing existing descriptions in the $n=2^t-3,...,2^t$ cases. 
  More precisely, as a $C$-module, $H$ splits as a direct sum of the characteristic subring $C$ and the anomalous module $H/C$, and we compute a complete presentation of $H/C$ as a $C$-module from the Koszul complex. We also discuss various issues that arise for the cases $k>3$, supported by computer calculation.
\end{abstract}

\author{\'Akos K.\ Matszangosz and Matthias Wendt}
\maketitle

\setcounter{tocdepth}{1}
\tableofcontents

\section{Introduction}

The cohomology of real Grassmannians is by now fairly well understood, both with mod~2 and with integral coefficients, by ways of Schubert calculus. It might then be very surprising that something as innocuous as taking a double cover can produce something as little understood as the oriented Grassmannians. While the mod~2 and rational Betti numbers are known, the mod~2 cohomology ring structure as well as the integral cohomology are still fairly mysterious. In this paper, we study the mod~2 cohomology ring of the oriented Grassmannians $\OGr_3(n)$ of oriented 3-planes in an oriented real $n$-dimensional vector space, using Koszul complexes. 

\subsection{Question and known results}
To give an idea of the context and known results pertaining to the cohomology ring structure for the oriented Grassmannians, we give a brief overview before formulating our results in the next section.

Before even getting to the ring structure, recall that the mod~2 Betti numbers for the oriented Grassmannians are known by work of Ozawa \cite{Ozawa2021}, using Morse theory.

Turning to the ring structure, there are some general approaches that can be used for homogeneous spaces, e.g. the Eilenberg--Moore spectral sequence for the fibration $G/H\to {\rm B}H\to {\rm B}G$, combined with information on the cohomology rings of classifying spaces. Many results have been achieved with this technique, see e.g. the papers of Borel \cite{Borel1953_mod2}, Baum \cite{Baum1968} and Franz \cite{Franz2021}. Such techniques work very well for the additive structure. For the multiplicative structure, there are extension problems, but in \cite{Franz2021}  these extension problems are solved under the assumption that 2 is invertible in the coefficients. The case of mod~2 coefficients seems to be the most difficult, as examples are known where the passage from the $E_\infty$-page to the actual cohomology involves nontrivial extensions.

This means that some additional information is necessary to investigate the ring structure in the oriented Grassmannian case. In \cite{KorbasRusin2016:cohomology}, Korba\v s and Rusin determined the ring structure for $\OGr_2(n)$ using the Gysin sequence combined with information on the characteristic rank and the image of the pullback along the double cover from \cite{KorbasRusin2016:rank}. 

Beyond the case $\OGr_2(n)$, no complete information on cohomology rings is available. In the cases $\OGr_k(n)$, $k=3$ or $4$, information on the characteristic rank and the image of the pullback has been obtained using Gr\"obner bases in \cite{Rusin2017}, \cite{PetrovicPrvulovicRadovanovic} (for $k=3$) and \cite{Korbas2015}, \cite{PrvulovicRadovanovic2019} (for $k=4$). Computations of cohomology rings have been made in some cases: Basu and Chakraborty \cite{BasuChakraborty2020} partially computed the cohomology ring structures for $\OGr_k(n)$ with $k=3$ and $n=2^{t-3},...,2^t$, and remaining ambiguities in the relations were recently resolved by Colovi\'c--Prvulovi\'c \cite{ColovicPrvulovic2023_2t} and Jovanovi\'c--Prvulovi\'c \cite{JovanovicPrvulovic2023}. Our focus in this paper is the description of the remaining cases for $k=3$, when $2^{t-1}<n\leq 2^t-4$.

For a few more known computations, Jovanovi\'c also recently determined integral cohomology for some $\OGr_3(n)$ cases in \cite{Jovanovic2022}, and Rusin computed some mod~2 cohomology rings for $\OGr_4(n)$ in~\cite{Rusin2019}. 

\subsection{The general setup}
After the overview of the literature, we now turn to describe a simple framework for computing the mod~2 cohomology of $\OGr_k(n)$. Before we can formulate our results, we need to set up some background and notation, which is discussed in more detail in Section~\ref{sec:general} below. The first thing to note is that the Gysin sequence for the double cover $\OGr_k(n)\to \Gr_k(n)$ produces an exact sequence of $C$-modules
\begin{equation}\label{eq:introgysin}
	\xymatrix@R-2pc{
		0\ar[r]&C\ar[r]&{\rm H}^*(\OGr_k(n),\mathbb{F}_2)\ar[r]^-{\de}&K\ar[r]&0.
	}	
\end{equation}
where $C$ and $K$ are the cokernel and kernel of the map $w_1\colon {\rm H}^*(\Gr_k(n),\mathbb{F}_2)\to {\rm H}^*(\Gr_k(n),\mathbb{F}_2)$ given by multiplication with the first Stiefel--Whitney class of the tautological subbundle. 
This immediately poses three questions: 
\begin{itemize}
	\item[a)]describe $C$ using generators and relations,
	\item[b)]describe $K$ as a $C$-module using generators and relations, and
	\item[c)]determine the extension class in $\Ext_C(K,C)$ given by the exact sequence \eqref{eq:introgysin}.
\end{itemize}

For step a), the ring $C$ has a well-known \cite{Fukaya} explicit description as follows:
\begin{equation}\label{eq:Cintro}
	C=\F_2[w_2\stb w_k]/(q_{n-k+1}\stb q_n),	
\end{equation}
with $w_i=w_i(S)$ and $q_i=w_i(\ominus S)$ where $\ominus S$ is the formal inverse of the tautological bundle $S\to {\rm BSO}(k)$. The $q_i$ can be expressed in terms of $w_i$ via a Giambelli type formula \eqref{eq:giambelli}, \eqref{eq:qjfromQj} or a recursion, see \ref{eq:recursion}. A lot of information on the structure of $C$ as a commutative ring can be obtained via Gr\"obner basis methods used in many papers, e.g. \cite{Fukaya,PetrovicPrvulovicRadovanovic,PrvulovicRadovanovic2019,ColovicPrvulovic2023_2t}.
 
The next step b) -- and the one this paper is really focused on -- is to compute a presentation for $K$ as a $C$-module. To determine $C$ and $K$, we note that they appear as 0th and 1st Koszul homology groups for the ideal $I=(q_{n-k+1},\dots,q_n)$ over the ring $W_2=\mathbb{F}_2[w_2,\dots,w_k]$. The generators of $K$ are directly related to the syzyzgies between generators of the ideal $I$ in $W_2$, see the discussion in Section~\ref{sec:koszul-homology}. 

After determining the extension class in step c), most of the information relevant for the mod~2 cohomology ring is available, namely all products where one factor is in $C$. We settle this description for $k=3$, in the cases that are not covered in the literature. 
\subsection{The case $k=3$}

 Next, we summarize our computations of the mod~2 cohomology rings of $\OGr_3(n)$. As detailed above, the cases $n=2^t-3,\dots,2^t$ have been mostly settled in the literature, and we will focus on the remaining cases $2^{t-1}<n\leq 2^t-4$ in the present paper. 
 
The reason for the distinction between the two cases $n=2^t-3,\dots,2^t$ and $2^{t-1}<n\leq 2^t-4$ is that in the former case, the one dealt with in the literature, the $C$-module $K$ is free of rank one, so the extension automatically splits, answering both questions b) and c). In the cases $2^{t-1}<n\leq 2^t-4$ that we are dealing with here, $K$ is no longer free, and it is generated by two elements $a_n$ and $d_n$. 
The degrees of the generators were already determined in \cite{BasuChakraborty2020}, but the relations in the presentation of $K$ have not been determined before. Our key computation in Section \ref{sec:K-presentation} is based on a detailed investigation of the Koszul complex for the ideal $I=(q_{n-2},q_{n-1},q_n)$ over the ring $W_2=\F_2[w_2,w_3]$. The main results of the paper give a presentation for $K$ as a $C$-module, answering question b).
Having that, it turns out that the above exact sequence splits as an extension of $C$-modules for degree reasons, see Proposition~\ref{prop:ext-vanishing-k3}, which settles the extension problem. In conclusion, we obtain the following description of the cohomology ring of $\OGr_3(n)$ for $2^{t-1}<n\leq 2^t-4$.

\break
\begin{theorem}\label{thm:main}
  Let $2^{t-1}<n\leq 2^t-4$, let $C$ be defined as in \eqref{eq:Cintro} and set $i=2^t-3-n$ and $j=n-2^{t-1}+1$. Then we have the isomorphism of $C$-modules
  \begin{equation}
    {\rm H}^*(\OGr_3(n);\F_2)\iso C\bra 1,a_n,d_n\ket/(q_ia_n+r_{j-1}d_n,
    q_{i+1}a_n+w_3r_{j-2}d_n, 
    w_3q_{i-1}a_n+r_jd_n),
  \end{equation}
  where $\deg a_n=3n-2^t-1$ and $\deg d_n=2^t-4$. Here $q_i$ are polynomials in $\F_2[w_2,w_3]$ defined by the recursion $q_{i}=w_2q_{i-2}+w_3q_{i-3}$ with $q_0=1$, $q_{<0}=0$, and $r_j$ are polynomials in $\mathbb{F}_2[w_2,w_3]$ defined by the recursion
  \[
  r_{j+1}=w_2r_j+w_3^2r_{j-2}.
  \]
  with $r_0=1$, $r_{<0}=0$. Closed-form expressions for $q_j$ and $r_j$ can be found in \eqref{eq:multinomial} and \eqref{eq:rj_closed}, respectively. The remaining ring structure of ${\rm H}^*(\OGr_3(n),\F_2)$ is determined by
  \begin{equation}\label{eq:squares}
    a_n^2=a_nd_n=d_n^2=0
  \end{equation}
\end{theorem}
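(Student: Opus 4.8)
The plan is to read the ring off the Gysin sequence \eqref{eq:introgysin} together with the identification of $C$ and $K$ as the zeroth and first Koszul homology of the ideal $I=(q_{n-2},q_{n-1},q_n)\subseteq W_2=\F_2[w_2,w_3]$. Once (i) a $C$-module presentation of $K$ is known, (ii) the extension class in $\Ext^1_C(K,C)$ is shown to vanish, and (iii) the three products $a_n^2,a_nd_n,d_n^2$ are shown to vanish, the theorem follows: (i)+(ii) give the $C$-module isomorphism $\mathrm H^*(\OGr_3(n);\F_2)\cong C\oplus K$, which with the presentation of $K$ yields the displayed generators-and-relations description, and (iii) together with that module structure determines every remaining product, since every class is a $C$-combination of $1,a_n,d_n$ and cross-terms only involve $a_n^2,a_nd_n,d_n^2$.

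The core is step (i). First I would note that $I$ is $\mathfrak m$-primary (its zero locus is the origin, checked directly from $q_{n-2},q_{n-1},q_n$) and minimally generated by three elements, so that the first syzygy module $Z_1=\ker\partial_1\subseteq W_2^3$ is free of rank two by the Hilbert--Burch theorem, and $\operatorname{depth}I=2$ forces the higher Koszul homology $\mathrm H_2=\mathrm H_3$ to vanish. Hence $K=\mathrm H_1$ is the quotient of the rank-two free module $Z_1$ by the image $\mathrm B_1$ of $\partial_2$, which is generated by the three Koszul boundaries. The crux is to write down explicit free generators of $Z_1$: I expect $g_1=(r_j,\,w_3r_{j-2},\,r_{j-1})$ and $g_2=(w_3q_{i-1},\,q_{i+1},\,q_i)$ to be syzygies, with $g_1\mapsto a_n$, $g_2\mapsto d_n$, and I would verify this via the identities
\[
w_3q_ir_{j-2}+q_{i+1}r_{j-1}=q_{n-2},\quad q_ir_j+w_3q_{i-1}r_{j-1}=q_{n-1},\quad q_{i+1}r_j+w_3^2q_{i-1}r_{j-2}=q_n,
\]
valid precisely because $i+j=2^{t-1}-2$ with $2^{t-1}$ a power of $2$; equivalently $[\,g_1\mid g_2\,]$ is a Hilbert--Burch matrix for $I$ whose maximal minors are $q_{n-2},q_{n-1},q_n$. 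A degree count then gives $\deg a_n=3n-2^t-1$ and $\deg d_n=2^t-4$, agreeing with \cite{BasuChakraborty2020}. Finally, the three Koszul boundaries $q_{n-2}f_2+q_{n-1}f_1$, $q_{n-2}f_3+q_nf_1$, $q_{n-1}f_3+q_nf_2$, rewritten through the same identities as $q_ig_1+r_{j-1}g_2$, $q_{i+1}g_1+w_3r_{j-2}g_2$, $w_3q_{i-1}g_1+r_jg_2$, give exactly the three relations in the statement; the closed forms for $q_j$ and $r_j$ come from the generating functions $1/(1+w_2z^2+w_3z^3)$ and $1/(1+w_2z+w_3^2z^3)$.

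For step (ii), I would compute $\Ext^1_C(K,C)$ from the presentation of $K$ obtained in (i) and check that its degree-zero part — the one containing the extension class — vanishes, because the degrees of $a_n,d_n$ and of the three relations, together with the top degree of the finite-dimensional algebra $C$, leave no room for a nontrivial extension; this is Proposition~\ref{prop:ext-vanishing-k3}. For step (iii), the products $a_n^2,a_nd_n,d_n^2$ sit in degrees $6n-2^{t+1}-2$, $3n-5$, $2^{t+1}-8$; when such a degree exceeds $\dim\OGr_3(n)=3n-9$ the product vanishes for dimension reasons, and in the remaining range one multiplies the three relations by $a_n$ and by $d_n$ and combines this with Poincar\'e duality on $\OGr_3(n)$ and the known socle of $C$ to show that the only potentially nonzero value is in fact zero.

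The hard part is step (i): pinning down the correct syzygies $g_1,g_2$ — which is what actually defines $a_n$ and $d_n$ at the chain level — and establishing the polynomial identities above. These are genuinely arithmetic statements about the families $q_\bullet$ and $r_\bullet$ that fail without the hypothesis that $2^{t-1}$ is a power of $2$, and proving them (by induction on $n$, with the generating-function description as a guide, and connecting to the infinite family of syzygies of Section~\ref{sec:general}) is where the bulk of Section~\ref{sec:K-presentation} will go; the passage from these identities to the presentation of $K$, and thence to the ring, is then essentially bookkeeping.
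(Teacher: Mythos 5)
Your steps (i) and (ii) are correct and match the paper's proof in substance. The three identities you display are exactly Lemma~\ref{lem:linear-equations}, proved there by induction on $n$ starting from $q_{2^{t-1}-3}=0$; your $g_1,g_2$ are the paper's basis vectors $u,v$ from Proposition~\ref{prop:k3-basis}, and rewriting the three Koszul boundaries in this basis is Proposition~\ref{prop:K-relations}. The one genuine methodological difference is that you deduce that $(g_1,g_2)$ is a free basis of $\ker d_1$ from the Hilbert--Burch theorem (the maximal minors of $[g_1\,|\,g_2]$ are $q_{n-2},q_{n-1},q_n$, which generate a grade-$2$ ideal by Corollary~\ref{cor:radical}), whereas the paper establishes freeness and spanning by a direct linear-algebra argument using the same determinant identity and radical computation; your route is legitimate and somewhat cleaner. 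The Ext-vanishing in (ii) is the paper's Proposition~\ref{prop:ext-vanishing-k3}, argued the same way.

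Step (iii) has a genuine gap for $a_n^2$ and $d_n^2$ (the vanishing of $a_nd_n$ for dimension reasons is fine). Neither square vanishes for dimension reasons throughout the range: $\deg d_n^2=2^{t+1}-8$ lies below $3n-9$ when $n$ is near $2^t-4$, and likewise for $a_n^2$ when $n$ is near $2^{t-1}$ and $t\geq 5$. Multiplying the three relations by $a_n$ or $d_n$ only shows the squares are annihilated by certain explicit positive-degree elements, which does not force them to vanish (every positive-degree class in this finite-dimensional algebra is $\mathfrak{m}$-torsion), and the appeal to Poincar\'e duality and the socle of $C$ is not an argument. The paper instead proves $A_n^2=D_n^2=0$ already in ${\rm H}^*(\Gr_3(n);\F_2)$ by induction on $n$, using the recursions $D_{n-1}=q_iQ_{n-1}+D_n$ and $A_{n+1}=w_3A_n+r_jQ_n$ (Lemmas~\ref{lem:dn-recursion} and \ref{lem:an-recursion}), with base cases at $n=2^t-3$ and $n=2^{t-1}$ imported from \cite{JovanovicPrvulovic2023} and \cite{ColovicPrvulovic2023_2t}; it then transfers back to $a_n,d_n$ via stability of Steenrod squares ($\de(a_n^2)=A_n^2$) together with a degree count showing the squares cannot land in $C$ (Lemma~\ref{lemma:reduction}). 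Those external base cases are essential input --- they are precisely the ambiguities left open by \cite{BasuChakraborty2020} and resolved only recently with cohomology operations --- and your sketch supplies no substitute for them.
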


The proof proceeds in the following steps:
\begin{itemize}
	\item[(I)] In Proposition~\ref{prop:k3-basis} we show that the kernel of the differential in the Koszul complex is a free $W_2$-module on two elements:
	\[\ker\left( d_1\colon W_2^{\oplus 3}\to W_2\right)=W_2\bra u_{3n-2^t},v_{2^t-3}\ket, \]
	\item[(II)] we compute the relations in the presentation of $K$ as a $C$-module (using the Koszul differential $d_2$) in Proposition~\ref{prop:K-relations},
	\item[(III)] we show that we have a splitting of $C$-modules ${\rm H}^*(\OGr_3(n),\mathbb{F}_2)=C\oplus K$ in  Proposition~\ref{prop:ext-vanishing-k3}, 
	\item[(IV)] and we compute the remaining products \eqref{eq:squares} in Proposition~\ref{prop:products}.
\end{itemize}

\subsection{The general method, and the case $k=4$}

The approach used for $k=3$ can also help computations and get some mileage in the cases $k>3$. It should be noted, however, that already the situation for $k=4$ differs in a significant number of aspects from the $k=3$ case, which really appears to be unusually smooth. Part of the difficulty of computing the cohomology rings ${\rm H}^*(\OGr_k(n);\F_2)$ for $k>3$ is related to the difficulty of computing a presentation of the first Koszul homology group as a $C$-module. We briefly outline the general method and mention some aspects that fail for $k>3$ here. For a more detailed discussion, see Section~\ref{sec:general} for the general method and Section~\ref{sec:beyondk3} for a description of the phenomena in the $k>3$ cases. 
\subsubsection{The Koszul complex}
The short exact sequence \eqref{eq:introgysin} derived from the Gysin sequence as well as the description of $C$ and $K$ as the zeroth and first homology of the Koszul complex works in complete generality. From this description, the generators of $K$ are directly related to the syzyzgies between generators of the ideal $I=(q_{n-k+1},\dots,q_n)$ in $W_2=\mathbb{F}_2[w_1,\dots,w_k]$, see the discussion in Section~\ref{sec:koszul-homology}.

We find some such syzygies in Theorem \ref{thm:twopower_syzygies}; namely, we prove that for $n=2^t$, the following relation holds between $(q_{n-k+1}\stb q_n)$:
\begin{theorem}
  \label{thm:fundamental-intro}
  For $n=2^t$, for $0<k<2^t$:
  \begin{equation}\label{eq:intro_relation}
    \sum_{i \text{ even}}q_{n-i}w_{i}=\sum_{i>1 \text{ odd}} q_{n-i}w_{i}=0.
  \end{equation}
\end{theorem}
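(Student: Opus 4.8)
\medskip

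\noindent\textbf{Proof strategy.}
The plan is to deduce \eqref{eq:intro_relation} from a short generating-function identity that exploits the Frobenius endomorphism in characteristic~$2$. Work in $W_2=\F_2[w_2\stb w_k]$ with the conventions $w_0=1$, $w_1=0$, and $w_i=0$ for $i>k$, and introduce the bookkeeping series
\[
  W(s)=\sum_{i\ge 0}w_is^i=W_{\mathrm{ev}}(s)+W_{\mathrm{odd}}(s),\qquad
  Q(s)=\frac{1}{W(s)}=\sum_{j\ge 0}q_js^j,
\]
where $W_{\mathrm{ev}}(s)=\sum_{i\text{ even}}w_is^i$ and $W_{\mathrm{odd}}(s)=\sum_{i\text{ odd}}w_is^i$; by the defining recursion for the dual classes, the $q_j$ here agree with those in the theorem. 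Set $S_N:=[s^N]\bigl(W_{\mathrm{ev}}(s)Q(s)\bigr)=\sum_{i\text{ even}}w_iq_{N-i}$, the coefficient of $s^N$. The first sum in \eqref{eq:intro_relation} is exactly $S_n$, and since $w_1=0$ the identity $\sum_{i\ge 0}w_iq_{n-i}=0$ (valid for $n\ge 1$ because $W(s)Q(s)=1$) identifies the second sum in \eqref{eq:intro_relation} with $S_n$ as well. So it suffices to prove $S_{2^t}=0$.

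The key point is the self-referential identity $S_{2m}=S_m^2$, valid for all $m\ge 0$. To see it, clear the denominator of $W_{\mathrm{ev}}(s)/W(s)$ by multiplying through by $W(s)$ and split off the even part of the resulting numerator $W_{\mathrm{ev}}(s)W(s)=W_{\mathrm{ev}}(s)^2+W_{\mathrm{ev}}(s)W_{\mathrm{odd}}(s)$:
\[
  W_{\mathrm{ev}}(s)Q(s)=\frac{W_{\mathrm{ev}}(s)\,W(s)}{W(s)^2}
  =\frac{W_{\mathrm{ev}}(s)^2}{W(s)^2}+\frac{W_{\mathrm{ev}}(s)\,W_{\mathrm{odd}}(s)}{W(s)^2}
  =\bigl(W_{\mathrm{ev}}(s)Q(s)\bigr)^2+\frac{W_{\mathrm{ev}}(s)\,W_{\mathrm{odd}}(s)}{W(s)^2}.
\]
Over $\F_2$ the square of any power series has only even-degree terms, so both $\bigl(W_{\mathrm{ev}}(s)Q(s)\bigr)^2$ and $1/W(s)^2=Q(s)^2$ involve only even powers of $s$, whereas $W_{\mathrm{ev}}(s)W_{\mathrm{odd}}(s)$ involves only odd powers; hence the last summand above contributes only odd powers of $s$. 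Comparing coefficients of $s^{2m}$ gives $S_{2m}=[s^{2m}]\bigl(W_{\mathrm{ev}}(s)Q(s)\bigr)^2=S_m^2$.

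A $t$-fold iteration of $S_{2m}=S_m^2$ now gives $S_{2^t}=S_1^{2^t}$, and $S_1=w_0q_1=q_1=0$ because $W(s)\equiv 1\pmod{s^2}$ forces $Q(s)\equiv 1\pmod{s^2}$. Therefore $S_{2^t}=0$, and with it both equalities in \eqref{eq:intro_relation}. (This computation gives the identity for every $k\ge 1$; the hypothesis $k<2^t$ is what makes the displayed sums supported on the generators $q_{n-k+1}\stb q_n$ of $I$, i.e.\ genuine syzygies rather than formal consequences of the recursion.)

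The one step that requires an idea is recognizing the identity $S_{2m}=S_m^2$ --- equivalently, the observation that clearing the denominator of $W_{\mathrm{ev}}/W$ replaces it by the perfect square $W^2$, which therefore has only even-degree terms (as does $Q^2=W^{-2}$), and leaves a numerator $W_{\mathrm{ev}}^2+W_{\mathrm{ev}}W_{\mathrm{odd}}$ whose even part $W_{\mathrm{ev}}^2$ is again a square. Once this is in hand the rest is a purely formal descent reducing everything to the vanishing of $q_1$, which is automatic in the oriented case $w_1=0$; I would not expect the bookkeeping of which sum is literally a syzygy among $q_{n-k+1}\stb q_n$ (a matter of the parity of $k$, harmless since the two sums agree over $\F_2$) to cause any trouble.
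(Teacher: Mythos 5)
Your proof is correct, and it takes a genuinely different route from the paper. The paper proves this identity combinatorially: it expands each $q_j$ via the multinomial formula \eqref{eq:multinomial} and reduces the statement to an identity of multinomial coefficients mod~$2$ (Proposition~\ref{prop:multinomial_sums}), which is then established through Lucas' theorem and a careful analysis of carries and disjointness of base-$2$ expansions (Lemmas~\ref{lem:multinomial} and \ref{lem:consecutive_multinomial}). Your argument replaces all of that with the single generating-function identity $S_{2m}=S_m^2$, obtained by writing $W_{\mathrm{ev}}Q=W_{\mathrm{ev}}W/W^2$ and using that squares of power series over $\F_2$ are supported in even degrees while $W_{\mathrm{ev}}W_{\mathrm{odd}}$ is supported in odd degrees; the descent $S_{2^t}=S_1^{2^t}=q_1^{2^t}=0$ then finishes the proof. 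I checked the key steps (the reduction of the odd sum to the even sum via the recursion, the parity bookkeeping in the three summands, and the Frobenius coefficient extraction $[s^{2m}]F^2=S_m^2$), and they all hold; the identity also verifies numerically for small $k$. Your approach is shorter and more conceptual, and it gives the extra structural information $S_{2^tm}=S_m^{2^t}$ for free, which would also be the natural starting point for the converse direction proved in Theorem~\ref{thm:twopower_syzygies} (there one still has to show $S_m\neq 0$ for odd $m>1$, which is where the paper's monomial argument would re-enter). The statement as given only asserts the forward direction, so your proof is complete for it.
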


The relation \eqref{eq:intro_relation} is a generalization of a result of Fukaya and Korba\v s about the vanishing of $q_{2^t-3}$ in the cases $k=3,4$, see \cite{Korbas2015}, \cite{Fukaya}, which is crucial in understanding the characteristic rank in these cases. The above result provides a $C$-module generator of the anomalous module $K$, and thus provides a new upper bound on the characteristic rank of $\OGr_k(2^t)$ for general $k$ and $t$, see Theorem \ref{thm:charrank}.

We also develop a general technique of ``ascending'' and ``descending'' such relations between the $q_j$ in Section~\ref{sec:syzygies}, which allows to obtain such relations for $\OGr_k(n)$ from similar relations for $\OGr_k(n-1)$ or $\OGr_k(n+1)$. In the case $k=3$, all syzygies (and thus all generators of $K$) are obtained from the vanishing $q_{2^t-3}=0$ via ascending and descending relations. This is already not true for $k=4$, as we demonstrate in Section \ref{sec:beyondk3}. However, the technique of ascending and descending relations provides syzygies for arbitrary $n$. Combining the fundamental relation in Theorem~\ref{thm:fundamental-intro} with the technique of ascending and descending relations, we formulate a general conjecture on the characteristic rank in Conjecture~\ref{new-amazing-conjecture} which is supported by computer experiments for small $k$ and $n$, see the discussion in Section~\ref{sec:beyondk3}.

An intermediate step in the computation of $K$ as a $C$-module --- viewed as the first homology of the Koszul complex --- is the computation of the kernel of the differential $d_1\colon \mathcal{K}_1\to\mathcal{K}_0$, cf.~Definition~\ref{def:koszul}. In the case $k=3$, this kernel is free of rank 2, generated exactly by the ascended and descended relations. This fails for $k=4$: more generators are necessary to generate the kernel, which is also no longer free. Computational experiments suggest that the kernel (as a $W_2$-module) always has a free resolution of length $k-2$. Nevertheless, it seems possible to obtain presentations for the kernel in the $k=4$ case, and once this is done, the relations in a presentation of $K$ as $C$-module can be extracted from the differential $d_2\colon\mathcal{K}_2\to \mathcal{K}_1$ much as in the present paper.
\subsubsection{The extension class}
Once a presentation of $K$ as a $C$-module has been obtained, we can ask how to determine the class of the extension $0\to C\to {\rm H}^*(\OGr_k(n),\mathbb{F}_2)\to K\to 0$ as an element in ${\rm Ext}^1_C(K,C)$. In the $k=3$ case, the Ext-group vanishes for degree reasons, see the discussion in Section~\ref{sec:ext-class}. However, computer algebra experiments show that the Ext-group is not generally trivial already for some $k=4$ cases. This non-vanishing of the Ext-group means that determining the $C$-module structure on ${\rm H}^*(\OGr_k(n),\mathbb{F}_2)$ is potentially much more complicated for $k>3$, a level of difficulty that seems to have not been noticed before. Nevertheless, one could imagine that the explicit description of the Koszul complex and kernel as in the previous step will help in computing the Ext-group more conceptually (at least in the $k=4$ case). This computation, and the question how to determine the class of the extension for ${\rm H}^*(\OGr_k(n),\mathbb{F}_2)$ in this Ext-group (which is equivalent to computing a presentation of the cohomology as a $C$-module), will be the focus of future research.

\subsubsection{Remaining ambiguities}
Once we understand ${\rm H}^*(\OGr_k(n),\mathbb{F}_2)$ as a $C$-module, most of the product structure is described. All that remains is to compute products between (lifts of) generators of $K$ as elements in ${\rm H}^*$. In the case $k=3$, there are only three products that need to be determined, one of which is trivial for degree reasons. The squares are shown to be zero via a similar induction as the one used for the relations, cf.~Propositions~\ref{prop:descendedsquare} and \ref{prop:ascendedsquare}. For $k>3$, one can well imagine to use integral cohomology information or cohomology operations to remove the remaining ambiguities as was done in recent papers \cite{ColovicPrvulovic2023_2t,JovanovicPrvulovic2023}. There are also some $k=4,5,6$ cases where the Ext-group vanishes and the remaining ambiguities can be removed easily, which we discuss in Section~\ref{sec:beyondk3}.

\subsection{Structure of the paper} We begin in Section~\ref{sec:general} with a review of possible techniques that have been employed to compute the cohomology of the oriented Grassmannians. We also emphasize the questions of extensions and module structures over the characteristic subring. Section~\ref{sec:characteristic-subring} provides information on the characteristic subring and its properties. In Section~\ref{sec:syzygies} we give a syzygy between the Stiefel--Whitney polynomials defining the characteristic subring and a new inductive procedure to obtain further relations. Then Section~\ref{sec:koszul-homology} recalls Koszul complexes and provides the concrete identification of the anomalous module $K$ as first Koszul homology. This allows to compute an explicit presentation of $K$ as a module over the characteristic subring in Section~\ref{sec:K-presentation}. A general discussion of Ext-groups and the vanishing result for the $k=3$ case is provided in Section~\ref{sec:ext-class}, and the remaining products of anomalous generators are investigated in Section~\ref{sec:ambiguities}. We also include an extensive discussion of the differences in the $k>3$ cases in Section~\ref{sec:beyondk3}.

\section{General method and notations}
\label{sec:general}

In this section we describe our general approach as well as introduce some notation and terminology. We also recall some other possible approaches and show how Poincar\'e duality completely settles the $C$-module question in cases when $K$ is generated by a single element.

\subsection{The Gysin sequence and setup of notation}

We denote by $\Gr_k(n)$ the real Grassmannian of $k$-planes in $\R^n$ and by $\OGr_k(n)$ the Grassmannian of oriented $k$-planes in $\R^n$. At the core of the upcoming algebraic computations is the Gysin sequence associated to the degree 2 covering $\pi\colon \OGr_k(n)\to\Gr_k(n)$: 
\[\xymatrix{	\ar[r]&
	{\rm H}^{i-1}(\Gr_k(n);\F_2)\ar[r]^{w_1}& 
	{\rm H}^i(\Gr_k(n);\F_2) \ar[r]^{\pi^*}& 
	{\rm H}^i(\OGr_k(n);\F_2) \ar[r]^\de& 
	{\rm H}^{i}(\Gr_k(n);\F_2)\ar[r]&
}
\]
In particular, the cohomology of $\OGr_k(n)$ sits in the short exact sequence
\begin{equation}\label{eq:w1ses}
	\xymatrix{	
		0\ar[r]&
		\coker w_1 \ar[r]^-{\pi^*}& 
		{\rm H}^*(\OGr_k(n);\F_2) \ar[r]^-\de& 
		\ker w_1\ar[r]&0
	}
\end{equation}
where each map is a homomorphism of graded $\coker w_1$-modules and $\de$ is a map of degree 0. Since they will appear so often in the text, we introduce the shorthand notation
\begin{equation}\label{CKH}
	C:=\coker w_1,\qquad H:={\rm H}^*(\OGr_k(n);\F_2),\qquad K:= \ker w_1,
\end{equation}
where $k$ and $n$ are usually fixed beforehand and clear from context. When $k$ and $n$ are not fixed, we use the notation $C_k(n)$ and $K_k(n)$.

For $k$ fixed, we will denote $W_1=\F_2[w_1\stb w_k]$ and $W_2=\F_2[w_2\stb w_k]$ (in most of the paper $k=3$). Then ${\rm H}^*(\Gr_k(n);\F_2)$ has a presentation as $W_1/(Q_{n-k+1}\stb Q_n)$ where $Q_i$ are some quotient Stiefel--Whitney classes (which can be expressed as Giambelli determinants) recalled in Section~\ref{sec:characteristic-subring}, and
\[C={\rm H}^*(\Gr_k(n);\F_2)/(w_1)=W_2/(q_{n-k+1}\stb q_n),\]
where $q_j=Q_j|_{w_1=0}$. Before proceeding further, let us fix some practical terminology.
\begin{definition}
	The subring $\pi^*C\se {\rm H}^*(\OGr_k(n);\F_2)$ is called the \emph{characteristic subring}, since it is the subring generated by the Stiefel--Whitney classes of the tautological bundle $S\to \OGr_k(n)$. In the terminology of \cite{KorbasRusin2016:rank}, a class $x\in H$ is \emph{anomalous}, if $x\not\in \im \pi^*$, equivalently, if its image under the projection $\delta\colon H\to K$ is nonzero. In abuse of terminology, we will therefore also call $K$ the \emph{anomalous module}.
\end{definition}

\begin{remark}
	We will be careful to distinguish between anomalous classes in $H$ and their image in $K$ in cases where it matters (such as for questions of the ring structure, since $\delta$ is only a $C$-module homomorphism). Namely, for $k=3$ and $2^{t-1}<n<2^t-3$, we will see in Proposition \ref{prop:K-relations} that $K_3(n)$ is generated by two elements $A_n$ and $D_n$; note that these denote elements of ${\rm H}^*(\Gr_3(n);\F_2)$. We will use $a_n$ and $d_n$ to denote lifts of these elements to ${\rm H}^*(\OGr_3(n);\F_2)$, i.e.\ these are elements satisfying $\de(a_i)=A_i$ and $\de(d_i)=D_i$.
\end{remark}

In connection with the question of computing the ring structure for $H$, we can ask what the $C$-module structure of $H$ is. Since \eqref{eq:w1ses} is a short exact sequence of graded $C$-modules, the $C$-module structure determines all products where at least one factor is in the image of the characteristic subring $C$. Knowing the $C$-module structure of $H$, all that remains to determine the ring structure of $H$ is the computation of products of anomalous classes in $H$. In fact, it suffices to compute products of classes which map to $C$-module generators of $K$ -- in the cases $k\leq 3$, there are at most two such generators.

To determine the $C$-module structure on the cohomology $H$ using the exact sequence \eqref{eq:w1ses}, one is faced with the following questions:
\begin{itemize}
	\item[a)]describe $C$ using generators and relations,
	\item[b)]describe $K$ as a $C$-module using generators and relations, and
	\item[c)]determine the extension class in $\Ext_C(K,C)$ given by the exact sequence \eqref{eq:introgysin}.
\end{itemize}

Question a) has been much addressed in the literature, often in terms of Gr\"obner bases. For the case $k=3$, we will answer question b) in Section~\ref{sec:K-presentation}. Question c) also turns out to have a simple answer for $k=3$ as the Ext-group actually vanishes in this case, cf.~Section~\ref{sec:ext-class}. However, we will see in Section~\ref{sec:beyondk3} that the Ext-group is in general non-trivial for $k\geq 4$, making question~c) significantly more difficult to answer in general.

\subsection{Comparison to spectral sequence methods}

While for the present, we will focus on using the Gysin sequence as a way to determine the cohomology of $\OGr_k(n)$, we briefly discuss other approaches that have been considered in the literature.

First, we note that complete information on the additive structure, i.e., information on the mod~2 Betti numbers, is available, cf.~\cite{Ozawa2021}. All the ways we know how to compute the mod~2 Betti numbers eventually boil down to understanding the multiplication with $w_1$ on ${\rm H}^*(\Gr_k(n),\mathbb{F}_2)$ in terms of Young diagram combinatorics. 

One way to understand the multiplicative structure is by use of suitable spectral sequences. To compute the mod $p$ cohomology of a general homogeneous space $G/H$, one can use the Eilenberg--Moore spectral sequence
\[
E_2^{*,*}={\rm Tor}_{{\rm H}^*({\rm B}H,\mathbb{F}_p)}^{*,*}({\rm H}^*({\rm B}G,\mathbb{F}_p);\mathbb{F}_p)\Rightarrow {\rm H}^*(G/H,\mathbb{F}_p)
\]
associated to the fiber sequence $G/H\to {\rm B}H\to{\rm B}G$ combined with the computation of cohomology of classifying spaces of Lie groups. This is the approach taken e.g.\ by Baum~\cite{Baum1968} and Franz~\cite{Franz2021}. 

However, there are still problems with the cohomology ring structure. While the spectral sequence has a multiplicative structure, this only implies that there is a filtration on ${\rm H}^*(G/H,\mathbb{F}_p)$ whose associated graded is computed by the $E_\infty$-page. As shown in \cite{Baum1968} and \cite{Franz2021}, these extension problems can be solved (and the extensions are split) in many cases, but for the particular case of $p=2$ there seem to be no general methods to solve the extension problems at this point. In the context of the spectral sequences, what we do in this paper is compute the multiplicative structure of the $E_\infty$-page (that's the $C$-module structure of $K$) and solve the extension problem (that's the vanishing of the Ext-group) for the specific case of $\OGr_3(n)$.

Alternatively, one could also use the Serre spectral sequence associated to the degree 2 covering $\OGr_k(n)\to \Gr_k(n)$ to compute the cohomology of $\OGr_k(n)$. However, in this case, the Serre spectral sequence in fact degenerates to the Gysin sequence. Similarly, in the case of the Eilenberg--Moore spectral sequence, the filtration on cohomology only has one nontrivial step. This means that in the case $\OGr_k(n)$, the spectral sequences available don't have more information than the Gysin sequence, and solving the extension problem requires different methods anyway. For this reason, we will work with the Gysin sequence throughout.

\subsection{Poincar\'e duality}
In some simpler cases, we can also exploit the Poincar\'e duality structure to establish relations between $C$ and $K$.

\begin{proposition}\label{prop:pd}
  Poincar\'e duality on $\Gr_k(n)$ induces a perfect pairing between $K=\ker w_1$ and $C=\coker w_1$. In particular, if $c_d$, $k_d$ denote the Betti numbers of $C$ and $K$ in degree $d$, respectively, then $c_i=k_{N-i}$, where $N$ is the dimension of $\Gr_k(n)$.
\end{proposition}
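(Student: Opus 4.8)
The plan is to derive the statement from mod~2 Poincar\'e duality on the closed manifold $\Gr_k(n)$ together with the elementary observation that multiplication by $w_1$ is self-adjoint for the Poincar\'e pairing. Write $V={\rm H}^*(\Gr_k(n);\F_2)$, a finite-dimensional graded $\F_2$-vector space, let $N=\dim\Gr_k(n)=k(n-k)$, and let $T\colon V\to V$ be the degree $+1$ endomorphism given by multiplication by $w_1=w_1(S)$. Mod~2 Poincar\'e duality gives a perfect pairing $\langle -,-\rangle\colon V\times V\to\F_2$, $\langle x,y\rangle=\langle x\smile y,[\Gr_k(n)]\rangle$, restricting to a perfect pairing ${\rm H}^i\times{\rm H}^{N-i}\to\F_2$ for each $i$ and vanishing on ${\rm H}^i\times{\rm H}^j$ unless $i+j=N$. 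The first point to record is self-adjointness: for all $x,y\in V$ one has $\langle Tx,y\rangle=\langle w_1x\smile y,[\Gr_k(n)]\rangle=\langle x\smile w_1y,[\Gr_k(n)]\rangle=\langle x,Ty\rangle$, using (trivial, over $\F_2$) graded-commutativity of the cup product. Note that no degree shift intervenes here precisely because $T$ raises degree on both sides of the pairing, which is concentrated in complementary degrees.

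Next I would prove the purely linear-algebraic fact that for a self-adjoint endomorphism $T$ of a finite-dimensional space $V$ with a perfect pairing, $\im T=(\ker T)^{\perp}$, where $(\ker T)^{\perp}=\{v\in V:\langle v,w\rangle=0\text{ for all }w\in\ker T\}$. The inclusion $\im T\subseteq(\ker T)^{\perp}$ is immediate from self-adjointness, since $\langle Tz,w\rangle=\langle z,Tw\rangle=0$ for $w\in\ker T$; equality then follows by comparing dimensions, as $\dim\im T=\dim V-\dim\ker T$ by rank-nullity while $\dim(\ker T)^{\perp}=\dim V-\dim\ker T$ because the pairing is perfect. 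Consequently the pairing induces an isomorphism $C=V/\im T=V/(\ker T)^{\perp}\xrightarrow{\ \sim\ }(\ker T)^{*}=K^{*}$; in particular the statement that $\im T\subseteq(\ker T)^{\perp}$ is exactly what guarantees that the Poincar\'e pairing descends to a well-defined pairing $C\times K\to\F_2$.

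Finally I would unwind the grading. Since the Poincar\'e pairing pairs ${\rm H}^i$ with ${\rm H}^{N-i}$, the degree-$i$ part of $(\ker T)^{\perp}$ is the annihilator inside ${\rm H}^i$ of $K_{N-i}:=\ker(w_1)\cap{\rm H}^{N-i}$, so $C_i={\rm H}^i\big/\operatorname{Ann}(K_{N-i})$ is canonically dual to $K_{N-i}$ via the perfect pairing ${\rm H}^i\times{\rm H}^{N-i}\to\F_2$. Therefore $c_i=\dim_{\F_2}C_i=\dim_{\F_2}K_{N-i}=k_{N-i}$, as claimed. The argument is essentially routine once set up this way; the only things needing care are the bookkeeping of degrees (confirming the absence of a shift, as above) and checking that the pairing genuinely descends to $C\times K$, which is subsumed in the inclusion $\im T\subseteq(\ker T)^{\perp}$. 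I do not anticipate a serious obstacle.
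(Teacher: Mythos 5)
Your proof is correct and follows essentially the same route as the paper: both identify $(\ker w_1)^{\perp}$ with $\im w_1$ via the self-adjointness of cup product with $w_1$ plus a dimension count, so that the Poincar\'e pairing descends to a perfect pairing $C\times K\to\F_2$. Your write-up just makes the self-adjointness step and the degree bookkeeping more explicit than the paper does.
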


\begin{proof}
  We denote by
  \[
  b(x,y)\colon {\rm H}^*(\Gr_k(n),\mathbb{F}_2)\otimes {\rm H}^*(\Gr_k(n),\mathbb{F}_2)\to \F_2\colon (x,y)\mapsto \pi_!(x\cup y)
  \]
  the perfect pairing of Poincar\'e duality. Restrict $b(x,y)$ to $\ker w_1\otimes {\rm H}^*(\Gr_k(n),\mathbb{F}_2)$. The orthogonal complement of $\ker w_1$ under $b$ is $(w_1)$: by definition, $(w_1)\subset(\ker w_1)^\perp$, but then we have $\dim_{\mathbb{F}_2}\ker(w_1)+\dim_{\mathbb{F}_2}(w_1)=\dim_{\mathbb{F}_2}{\rm H}^*(\Gr_k(n),\mathbb{F}_2)$, forcing equality. Thus $b$ descends to a perfect pairing $\ker w_1\otimes \coker w_1\to \F_2$.
\end{proof}

\begin{remark}
  This is a purely algebraic and a more general statement; we just need a Poincar\'e duality algebra and an element in it.
\end{remark}

\begin{lemma}
  \label{lem:powerful}
  Let $A$ be a Poincar\'e duality $\F_2$-algebra, with a generator $\om_A$ of the top degree. Let $I\subseteq A$ be an ideal and denote by $C:=A/I$ the quotient algebra, and by $K:=\op{Ann}_AI$ the annihilator of $I$ . Assume that $C$ is also a Poincar\'e duality algebra with top degree generator $\om_C$. If $\om_C\cdot x=\om_A$ for some $x\in K$, then the morphism $\mu_x\colon C\to K$ given by multiplication with $x$ is injective.
\end{lemma}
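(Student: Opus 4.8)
The statement is essentially a formal consequence of the Poincaré duality structure, so the plan is to argue by contraposition: suppose $\mu_x$ is not injective, i.e. there is some nonzero $c \in C$ with $xc = 0$ in $A$ (note $xc \in K \subseteq A$, so this makes sense either in $K$ or in $A$). We want to derive a contradiction with the hypothesis $\om_C \cdot x = \om_A$. The key tool is that both $A$ and $C$ are Poincaré duality $\F_2$-algebras: in $A$, the pairing $(a,b) \mapsto \langle ab, \om_A^\vee\rangle$ (identifying the top degree with $\F_2$ via $\om_A$) is perfect, and similarly for $C$ with $\om_C$.

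First I would lift $c$ to an element $\tilde c \in A$ representing the nonzero class in $C = A/I$. Since $c \neq 0$ in the Poincaré duality algebra $C$, there is some $y \in A$ (or rather its class $\bar y \in C$) with $c \bar y = \om_C$ in $C$, i.e. $\tilde c \tilde y \equiv \om_C \pmod{I}$ in $A$ — here I am implicitly choosing a lift of $\om_C$ to $A$, which requires a little care since $\om_C$ lives in a specific degree and $I$ may meet that degree; I should instead phrase this as: the image of $\tilde c$ in $C$ being nonzero means there exists $\bar y \in C$ with $\tilde c \bar y = \om_C$. Now multiply the relation $xc = 0$ (in $A$) by $\tilde y$: since $x \in K = \Ann_A I$, the product $x \tilde c$ depends only on the class of $\tilde c$ modulo $I$, so $x \tilde c \tilde y$ depends only on $\tilde c \tilde y \bmod I = \om_C$. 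More precisely, $x$ kills $I$, so the multiplication-by-$x$ map factors as $A \twoheadrightarrow C \to A$; call this factored map $\mu_x$ again. Then $\mu_x(c) = x c = 0$ by assumption, but also $\mu_x(\om_C) = \mu_x(\tilde c \bar y) = \mu_x(c \bar y)$, and since $\mu_x$ is a $C$-module map (multiplication by $x$ is $A$-linear hence $C$-linear on the quotient) we get $\mu_x(c \bar y) = \bar y \cdot \mu_x(c) = \bar y \cdot 0 = 0$. Therefore $x \cdot \om_C = 0$ in $A$, contradicting $x \cdot \om_C = \om_A \neq 0$.

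So the argument reduces to two clean observations: (i) multiplication by $x \in \Ann_A I$ descends to a $C$-linear map $\mu_x \colon C \to K \subseteq A$, which is immediate from $xI = 0$; and (ii) any nonzero element $c$ of the Poincaré duality algebra $C$ can be multiplied up to the fundamental class $\om_C$, which is exactly the nondegeneracy of the pairing on $C$. Combining with $\mu_x(\om_C) = x\om_C = \om_A \neq 0$ and $C$-linearity forces $\mu_x(c) \neq 0$ for every nonzero $c$, which is injectivity.

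The step I expect to need the most care is the bookkeeping around lifts and the precise meaning of the equation $\om_C \cdot x = \om_A$: one must be clear that $x$ is an element of $A$ lying in $K$, that $\om_C \cdot x$ means the product in $A$ of (a lift of) $\om_C$ with $x$, and that this is well-defined independently of the lift precisely because $x \in \Ann_A I$. Once that is pinned down, the only real input is nondegeneracy of the Poincaré pairing on $C$, and the rest is $C$-linearity of $\mu_x$. There is no serious obstacle; the proposition is a soft statement whose content is entirely in setting up the two Poincaré duality structures compatibly, which the preceding Proposition~\ref{prop:pd} and the surrounding discussion already do in the geometric case of interest.
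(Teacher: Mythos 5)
Your proof is correct, and it is the natural argument: multiplication by $x\in\Ann_A I$ kills $I$ and lands in $\Ann_A I$, hence descends to a $C$-linear map $\mu_x\colon C\to K$, and nondegeneracy of the Poincar\'e pairing on $C$ lets you multiply any nonzero $c$ up to $\om_C$, whence $\mu_x(c)=0$ would force $x\cdot\om_C=0$, contradicting $x\cdot\om_C=\om_A\neq 0$. The paper actually states Lemma~\ref{lem:powerful} without proof, so there is nothing to compare against; your write-up supplies exactly the two observations (well-definedness/$C$-linearity of $\mu_x$ and duality in $C$) that the omitted proof would need.
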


\begin{remark}
  This is a surprisingly powerful lemma: in the cases where $K$ is a free $C$-module (generated by one element), we can apply it. In the cases when $K$ is not free, $C$ is not a Poincar\'e duality algebra - however, surprisingly, the condition $\om_C\cdot x=\om_A$ is still sometimes satisfied.
\end{remark}

\begin{corollary}
  \label{cor:powerful}
In the situation $\OGr_k(n)$, if the anomalous module $K$ is generated by one element, then $K$ is free of rank one. Consequently, ${\rm H}^*(\OGr_k(n),\mathbb{F}_2)\cong C\oplus K\cong C^{\oplus 2}$ as $C$-modules (with the second isomorphism ignoring the grading). 
\end{corollary}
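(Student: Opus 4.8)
The plan is to deduce this corollary from Lemma~\ref{lem:powerful} together with Proposition~\ref{prop:pd} and the elementary observation that the top-degree cohomology of $\Gr_k(n)$ factors appropriately. First I would recall that $A := {\rm H}^*(\Gr_k(n);\F_2)$ is a Poincar\'e duality algebra with top-degree generator $\om_A$ (this is the fundamental class of $\Gr_k(n)$), and that in the notation of Lemma~\ref{lem:powerful} we are taking $I = (w_1)$, so that $C = A/(w_1) = \coker w_1$ and $K = \Ann_A(w_1) = \ker w_1$ — these are exactly the $C$ and $K$ of \eqref{CKH}. By the well-known description of $C$ (equation \eqref{eq:Cintro}, via \cite{Fukaya}), the characteristic subring $C$ is itself a complete intersection and hence a Poincar\'e duality algebra; let $\om_C$ denote its top-degree generator.

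The key step is to verify the hypothesis $\om_C \cdot x = \om_A$ of Lemma~\ref{lem:powerful} for a suitable $x \in K$. Here is where the assumption that $K$ is generated by one element enters. By Proposition~\ref{prop:pd}, the pairing $K \otimes C \to \F_2$ induced by Poincar\'e duality on $\Gr_k(n)$ is perfect, so $c_i = k_{N-i}$ with $N = \dim\Gr_k(n)$. In particular the top nonzero degree of $K$ equals $N$ minus the bottom degree of $C$, which is $0$ (as $1 \in C$), so $K$ is nonzero precisely in degree $N$ at the top; dually the top degree of $C$ is $N$ minus the bottom degree of $K$. If $K$ is generated by a single element $x$ in its lowest degree $e$, then $K = C\cdot x$ and the graded dimension count from Proposition~\ref{prop:pd} forces $\dim_{\F_2} C = \dim_{\F_2} K$, i.e.\ the surjection $\mu_x\colon C \to K$, $c \mapsto cx$, is between $\F_2$-vector spaces of the same (finite) total dimension; in the top degree, $C$ has a one-dimensional socle spanned by $\om_C$, and $\om_C \cdot x$ lands in ${\rm H}^N = {\rm H}^{\dim\Gr_k(n)}$, the top degree of $A$, hence $\om_C \cdot x$ is either $0$ or $\om_A$. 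It cannot be $0$: if it were, then $\om_C$ would lie in the kernel of $\mu_x$, contradicting that $\mu_x$ is a surjection of equidimensional graded spaces (a surjection of finite-dimensional graded vector spaces of equal dimension is an isomorphism). Thus $\om_C \cdot x = \om_A$, and Lemma~\ref{lem:powerful} applies: $\mu_x$ is injective, hence an isomorphism $C \xrightarrow{\sim} K$ of $C$-modules, so $K$ is free of rank one.

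For the last assertion, the short exact sequence \eqref{eq:w1ses} now reads $0 \to C \to H \to K \to 0$ with $K \cong C$ free, hence projective, so the sequence splits as $C$-modules: $H \cong C \oplus K$. Since $K \cong C$, this gives $H \cong C^{\oplus 2}$ once we forget the grading (the grading is shifted: the second copy of $C$ sits in degrees $\geq e$).

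\textbf{Main obstacle.} The delicate point is pinning down $\om_C \cdot x = \om_A$ rather than merely $\om_C\cdot x \in \{0, \om_A\}$. The clean way to see $\om_C\cdot x \neq 0$ is the dimension-count argument above (surjectivity of $\mu_x$ plus equal dimensions forces injectivity, in particular on the socle); one must be a little careful that "generated by one element" is being used in lowest degree and that the Betti-number identity $c_i = k_{N-i}$ of Proposition~\ref{prop:pd} indeed gives $\sum_i c_i = \sum_i k_i$. An alternative, perhaps more conceptual, route avoiding the dimension count: argue directly that for $x$ a generator of $K$, multiplication $K \otimes C \to \F_2$ being perfect means $x$ pairs nontrivially with $\om_C$ under $b$, i.e.\ $b(x, \om_C) = \pi_!(x \cup \om_C) \neq 0$, which is exactly the statement $x \cdot \om_C = \om_A$ in the top degree of $A$ — so in fact the hypothesis of Lemma~\ref{lem:powerful} is automatic from the perfectness in Proposition~\ref{prop:pd} the moment one checks that a generator of $K$ must pair nontrivially with something, and $\om_C$ is the only degree available. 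This makes the corollary essentially immediate from the two preceding results.
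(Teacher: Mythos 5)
Your core argument is correct and is exactly the paper's: cyclicity of $K$ gives a surjection $\mu_x\colon C\to K$, the perfect pairing of Proposition~\ref{prop:pd} gives $\sum_i c_i=\sum_i k_i$ (both finite), and a surjection of finite-dimensional $\F_2$-vector spaces of equal total dimension is an isomorphism; freeness of $K$ then splits \eqref{eq:w1ses}. The paper's own proof of Corollary~\ref{cor:powerful} is precisely this dimension count and does not actually route through Lemma~\ref{lem:powerful}.

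However, one assertion in your write-up is false and should be struck: $C=\F_2[w_2,\dots,w_k]/(q_{n-k+1},\dots,q_n)$ is \emph{not} a complete intersection (it is presented by $k$ relations in $k-1$ variables --- the existence of syzygies among the $q_i$ is the whole subject of the paper), and it is not in general a Poincar\'e duality algebra; the remark following Lemma~\ref{lem:powerful} states explicitly that $C$ fails to be a PD algebra whenever $K$ is not free. In the cyclic case, PD-ness of $C$ is a \emph{consequence} of the corollary (via $K\cong C$ up to shift together with Proposition~\ref{prop:pd}), not an available input. Fortunately this unjustified claim only enters the redundant portion of your argument: you first prove that $\mu_x$ is an isomorphism by the dimension count, then use that to verify $\om_C\cdot x=\om_A$, and then invoke Lemma~\ref{lem:powerful} to re-derive the injectivity you already have. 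Deleting the entire detour through Lemma~\ref{lem:powerful} (and with it the unsupported hypothesis that $C$ is a Poincar\'e duality algebra) leaves a complete and correct proof. Your ``alternative, more conceptual route'' at the end suffers from the same issue, since Lemma~\ref{lem:powerful} takes PD-ness of $C$ as a hypothesis and therefore cannot be applied before that is established.
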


\begin{proof}
  We can consider the total dimension of $K$ as $\mathbb{F}_2$-vector space. Since by assumption $K$ is cyclic, this is at most the total dimension of $C$ as $\mathbb{F}_2$-vector space. The perfect pairing from Proposition~\ref{prop:pd} then implies that $K$ needs to be free: any relation divided out would reduce the total dimension. 
  From this, we deduce that the extension \eqref{eq:w1ses} splits and the second claim follows.
\end{proof}

\begin{remark}
  This covers many situations considered in the literature, such as the cases $n=2^t-3,\dots,2^t$ for $\OGr_3(n)$ in \cite{BasuChakraborty2020,ColovicPrvulovic2023_2t,JovanovicPrvulovic2023}, or the cases ${\rm Gr}_4(n)$ for $n=8,9$ in \cite{Rusin2019}. We also find several further situations in Section~\ref{sec:beyondk3} such as $\OGr_5(16)$, $\OGr_5(32)$ and $\OGr_4(n)$ for $n=13,\dots,17,29,\dots,33$. One could conjecture that this generalizes to $\OGr_5(2^t)$ and $\OGr_4(n)$ with $2^t-3\leq n\leq 2^t+1$.
\end{remark}

\section{The characteristic subring and its properties} 
\label{sec:characteristic-subring}

In this short section, we will recall some information on the characteristic subring $C=\coker(w_1)$ as a ring and its presentation in terms of Giambelli determinants $q_i$. 

\subsection{Presentation of the characteristic subring}
Recall that the cohomology of the (unoriented) real Grassmannian $\Gr_k(n)$ has a presentation as
\[
{\rm H}^*(\Gr_k(n);\F_2)=\F_2[w_1\stb w_k]/(Q_{n-k+1}\stb Q_n),
\]
where the $Q_j$ are uniquely determined by the Whitney sum formula:
\[
1+w_1+\ldots +w_k=\frac{1}{1+Q_1+Q_2+\ldots }.
\]
More explicitly, the $Q_j$ can be written as the following Giambelli determinant:
\begin{equation}\label{eq:giambelli}
Q_j=\det
\left(\begin{array}{ccccc}
	w_1 & w_2 & \ldots & w_{j-1} & w_j \\
	1 & w_1 & w_2 & \ldots & w_{j-1} \\
	\vdots & \ddots & \ddots & \ddots & \ldots \\
	0 & \ldots & 1 & w_1 & w_2 \\
	0 & \ldots & 0 & 1 & w_1
\end{array}\right)	
\end{equation}

For the cokernel $C=\coker(w_1)$, being the quotient by $w_1$, we get a similar presentation \[C=\F_2[w_2\stb w_k]/(q_{n-k+1}\stb q_n),\] where now the $q_j$ are determined by
\begin{equation}\label{eq:qj}
	1+w_2+\ldots +w_k=\frac{1}{1+q_1+q_2+\ldots }.
\end{equation}
Alternatively, we obtain $q_j$ explicitly by setting $w_1=0$ in the above Giambelli determinant: 
\begin{equation}\label{eq:qjfromQj}
	q_j=Q_j|_{w_1=0}.
\end{equation}
This implies that $q_j$ satisfy the recursive formula
\begin{equation}\label{eq:recursion}
	q_j=\sum_{l=2}^k w_lq_{j-l}
\end{equation}
Via a standard computation, \cite[(2.8)]{ColovicPrvulovic2023_2t}, we can also write even more explicitly
\begin{equation}\label{eq:multinomial}
  q_j=\sum_{j=2a_2+\ldots + ka_k} \binom{|a|}{a}w^a,
\end{equation}
where $a=(a_2\stb a_k)$, $w^a=\prod_{i=2}^k w_i^{a_i}$, $|a|=\sum_{i=2}^k a_i$ and $\binom{|a|}{a}$ is the multinomial coefficient corresponding to $a$ mod~2.

\subsection{Ring-theoretic properties of $C$}

In this short section, we establish some ring-theoretic properties of $C$ that will be needed later in the description of Koszul homology.

\begin{proposition}
  \label{prop:dim0}
The commutative $\mathbb{F}_2$-algebra $C$ is a local ring of Krull dimension $0$. 
\end{proposition}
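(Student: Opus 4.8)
The plan is to show that $C = \F_2[w_2, \ldots, w_k]/(q_{n-k+1}, \ldots, q_n)$ is Artinian (hence zero-dimensional) and then that it is local. For the dimension statement, the key observation is that $C$ is a finitely generated $\F_2$-algebra which is finite-dimensional as an $\F_2$-vector space: indeed, $C$ is a quotient of ${\rm H}^*(\Gr_k(n);\F_2)$, which is a finite-dimensional graded $\F_2$-algebra since $\Gr_k(n)$ is a compact manifold. A commutative ring that is finite-dimensional over a field has Krull dimension $0$ (every prime ideal is maximal, since the quotient domain by any prime is again a finite-dimensional $\F_2$-algebra, hence a field by the standard argument that a finite-dimensional domain over a field is a field). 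Alternatively, and perhaps more in the spirit of the Koszul-complex framework, one could argue directly: the radical of the ideal $(q_{n-k+1}, \ldots, q_n)$ in $\F_2[w_2, \ldots, w_k]$ contains a power of each $w_i$ (since $q_{n-k+1}, \ldots, q_n$ form a system of parameters — this is the standard fact that ${\rm H}^*(\Gr_k(n))$ is a complete intersection), so $C$ is a quotient of a finite-dimensional ring.

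For locality, the cleanest route is to use the grading: $C = \bigoplus_{d \geq 0} C^d$ is a connected graded $\F_2$-algebra with $C^0 = \F_2$, and it is finite-dimensional by the previous paragraph. The augmentation ideal $\mathfrak{m} = \bigoplus_{d > 0} C^d$ is then a maximal ideal, and since $C$ is finite-dimensional, $\mathfrak{m}$ is nilpotent: every element of $\mathfrak{m}$ has positive degree, so a sufficiently high power lands in degrees exceeding the top degree of $C$, which is zero. A ring with a nilpotent maximal ideal is local, with that ideal as its unique maximal ideal (any maximal ideal must contain the nilradical, which contains $\mathfrak{m}$, forcing equality). This simultaneously reproves zero-dimensionality.

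I would organize the proof as: (1) note $C$ is a connected graded $\F_2$-algebra, finite-dimensional over $\F_2$ because it is a quotient of ${\rm H}^*(\Gr_k(n);\F_2)$ (citing the compactness of the Grassmannian or the complete-intersection presentation); (2) conclude the augmentation ideal $\mathfrak{m}$ is nilpotent; (3) deduce $C$ is local with maximal ideal $\mathfrak{m}$ and Krull dimension $0$. I do not anticipate a genuine obstacle here — the only point requiring a little care is justifying finite-dimensionality cleanly, i.e.\ making sure to invoke the known presentation of ${\rm H}^*(\Gr_k(n);\F_2)$ or the fact that the $q_j$ cut out a zero-dimensional ideal, rather than taking it for granted.
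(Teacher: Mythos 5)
Your proposal is correct and takes essentially the same approach as the paper: both arguments rest on the nilpotence of the augmentation ideal $(w_2,\dots,w_k)$ in the finite-dimensional connected graded algebra $C$, deducing that $C$ is local with that ideal as unique maximal (and unique prime) ideal, hence of Krull dimension $0$. You are merely more explicit about why the positive-degree elements are nilpotent (finite-dimensionality of $C$ as a quotient of ${\rm H}^*(\Gr_k(n);\F_2)$), a point the paper treats as clear, and your alternative complete-intersection remark matches the paper's second paragraph.
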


\begin{proof}
  It is clear that $(w_2,\dots,w_k)$ is an ideal consisting only of nilpotent elements. The complement consists of polynomials in Stiefel--Whitney classes which have constant coefficient 1. But because of the nilpotence of $w_2,\dots,w_k$, such polynomials are invertible. This means that $C$ is a local ring with maximal ideal $(w_2,\dots,w_k)$.

  Since the maximal ideal consists only of nilpotent elements, we also immediately get the claim about Krull dimension: the ideal $(w_2,\dots,w_k)$ is the nilradical and thus the unique prime ideal. For another way to see the claim about Krull dimension, note that $C$ is a quotient of the cohomology ring ${\rm H}^*(\Gr_k(n),\mathbb{F}_2)$ of the corresponding ordinary Grassmannian. The latter is the quotient of $\mathbb{F}_2[w_1,\dots,w_k]$ by the regular sequence $(q_{n-k+1},\dots,q_n)$. In particular, it is a complete intersection ring of Krull dimension $0$, since it is cut out by $k$ equations in $k$-dimensional affine space. Since $C$ is a quotient, it must also have Krull dimension $0$. 
\end{proof}

\begin{corollary}
  \label{cor:radical}
  For any $k,n$ we have the following equality of ideals in $W_2=\mathbb{F}_2[w_2,\dots,w_k]$:
  \[
  \sqrt{(q_{n-k+1},\dots,q_n)}=(w_2,\dots,w_k)
  \]
\end{corollary}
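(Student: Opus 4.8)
The plan is to deduce this directly from Proposition~\ref{prop:dim0}; in fact the corollary is essentially a restatement of the content of that proof in terms of the ideal in $W_2$ rather than the quotient ring $C$. The two ingredients are: the radical of an ideal equals the intersection of the primes containing it, and primes of $W_2=\F_2[w_2\stb w_k]$ containing $(q_{n-k+1}\stb q_n)$ correspond to primes of the quotient $C=W_2/(q_{n-k+1}\stb q_n)$.

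First I would verify the inclusion $\sqrt{(q_{n-k+1}\stb q_n)}\se(w_2\stb w_k)$. By the recursion~\eqref{eq:recursion} (or the closed form~\eqref{eq:multinomial}), every $q_j$ with $j\geq 1$ has vanishing constant term, hence lies in $(w_2\stb w_k)$. Therefore $(q_{n-k+1}\stb q_n)\se(w_2\stb w_k)$, and since $(w_2\stb w_k)$ is a maximal (hence prime, hence radical) ideal of $W_2$, taking radicals of both sides gives the claimed inclusion.

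For the reverse inclusion I would invoke the nilpotence statement established in the proof of Proposition~\ref{prop:dim0}: the image of each $w_i$ in $C$ is nilpotent, so a suitable power of $w_i$ lies in $(q_{n-k+1}\stb q_n)$, i.e.\ $w_i\in\sqrt{(q_{n-k+1}\stb q_n)}$ for $i=2\stb k$. This gives $(w_2\stb w_k)\se\sqrt{(q_{n-k+1}\stb q_n)}$, and combining the two inclusions proves the equality. Equivalently, one can argue in one stroke: Proposition~\ref{prop:dim0} says $C$ is local of Krull dimension $0$ with maximal ideal the image of $(w_2\stb w_k)$, hence $(w_2\stb w_k)$ is the unique prime of $W_2$ containing $(q_{n-k+1}\stb q_n)$ and is therefore its radical. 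There is no genuine obstacle; the only point requiring a little care is to run the argument ring-theoretically via the zero-dimensionality already established rather than via a Nullstellensatz-type statement, since the ground field $\F_2$ is not algebraically closed.
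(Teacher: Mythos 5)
Your proposal is correct and follows essentially the same route as the paper: both deduce the statement from Proposition~\ref{prop:dim0}, identifying $(w_2,\dots,w_k)$ as the unique prime of $W_2$ containing $(q_{n-k+1},\dots,q_n)$ and hence as its radical. Your two-inclusion version merely spells out the same content (each $q_j$ with $j\geq 1$ has no constant term, and each $w_i$ is nilpotent in $C$) in slightly more detail.
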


\begin{proof}
  By Proposition~\ref{prop:dim0}, $C=W_2/(q_{n-k+1},\dots,q_n)$ is a local ring of Krull dimension 0. It therefore has a single prime ideal, which is the maximal one $(w_2,\dots,w_k)$. Therefore, the unique prime ideal of $W_2$ containing $(q_{n-k+1},\dots,q_n)$ is the irrelevant ideal $(w_2,\dots,w_k)$, which therefore must be the radical. 
\end{proof}

\section{Syzygies of the characteristic subring}
\label{sec:syzygies}
In this section, we give a system of syzygies between $(q_{n-k+1}\stb q_n)$ over $W_2=\F_2[w_2\stb w_k]$.  First, we will describe one such relation for arbitrary $k$ and $n=2^t$ which plays a fundamental role. More precisely, we prove in Theorem~\ref{thm:twopower_syzygies}:
\[
\sum_{i\geq 0} w_{2i}q_{n-2i}=0.
\] 
From this relation we recover the relations discovered by Fukaya \cite{Fukaya} and Korba\v s \cite{Korbas2015} in their study of the cohomological properties of $\OGr_3(n)$ and $\OGr_4(n)$.  

In the second half of this section, we will examine how these relations give further relations as $n$ increases or decreases - we call these \emph{ascending} and \emph{descending relations}, cf.~\eqref{eq:descending_relation} and \eqref{eq:ascending_relation}. In this way we obtain relations between $(q_{n-k+1}\stb q_n)$ for all $n$. We will use this method in our computation of the Koszul homology of the $\OGr_3(n)$ case. The discussion in Section~\ref{sec:beyondk3} suggests that the fundamental syzygy and the procedure of ascending and descending relations might play a key role also in $k>3$ cases.

\subsection{Multinomial coefficients mod~2}
We first review some properties of multinomial coefficients mod~2 which will be relevant for the proof of the fundamental syzygy in Theorem~\ref{thm:twopower_syzygies}.

For a sequence of non-negative integers $a=(a_2\stb a_k)$\footnote{To agree with later notation we start the indexing from $a_2$.} we say that \emph{their base-2 expansions are disjoint} if each power of 2 appears in the base-2 expansion of at most one of the $a_j$. We will denote the multinomial coefficient corresponding to the sequence $a$ by $\binom{|a|}{a}$, where $|a|=\sum_{i=2}^k a_i$. The following characterization of mod~2 multinomial coefficients is also known as Lucas' theorem.
\begin{lemma}
  \label{lem:multinomial}
  For a tuple $a=(a_2,\dots,a_k)$, denote $|a|=\sum_{i=2}^k a_i$. Then the 2-adic valuation of the multinomial coefficient $\binom{|a|}{a_2,\dots,a_k}$ equals the number of carrying operations in the addition $|a|=a_2+\cdots+a_k$. As a consequence, the following are equivalent:
  \begin{enumerate}
  \item 
    \[
    \binom{|a|}{a_2,\dots,a_k}\equiv 1\bmod 2
    \]
  \item There is no carrying in the addition $|a|=a_2+\cdots+a_k$ in base 2.
  \item The base 2 expansions of the $a_i$ are disjoint.
  \end{enumerate}
\end{lemma}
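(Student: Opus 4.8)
\textbf{Proof plan for Lemma~\ref{lem:multinomial}.}

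The statement is the multinomial version of Kummer's theorem, and the cleanest route is to reduce it to the binomial case via the standard factorization $\binom{|a|}{a_2,\dots,a_k}=\prod_{l=3}^{k}\binom{a_2+\cdots+a_l}{a_l}$, together with the (classical) fact that the 2-adic valuation of $\binom{m+n}{n}$ equals the number of carries when adding $m$ and $n$ in base 2. Summing the carry counts across the telescoping product of binomials, one sees that $v_2\binom{|a|}{a}$ equals the total number of carries occurring when the partial sums $a_2$, $a_2+a_3$, \dots, $a_2+\cdots+a_k$ are built up one summand at a time; a short check shows this total is independent of the order in which one adds the $a_i$ and depends only on the base-2 digits of the $a_i$, namely it is the number of carries in the full addition $a_2+\cdots+a_k$. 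This proves the valuation formula.

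The equivalences then follow quickly. The implication $(1)\Leftrightarrow(2)$ is immediate from the valuation formula: the coefficient is odd exactly when the valuation is zero, i.e.\ when there are no carries. For $(2)\Leftrightarrow(3)$, I would argue digit by digit: adding the $a_i$ in base 2 produces no carry in any position precisely when, for every power $2^s$, at most one of the $a_i$ has a $1$ in the $2^s$ place — which is exactly the statement that the base-2 expansions of the $a_i$ are pairwise disjoint. (If two of them shared a digit, the first position where this happens forces a carry; conversely disjointness makes every column sum at most $1$.)

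The only genuine point requiring care is the order-independence in the telescoping argument — verifying that the accumulated carry count of $\prod\binom{a_2+\cdots+a_l}{a_l}$ really is symmetric in the $a_i$ and equals the carry count of the single addition $a_2+\cdots+a_k$. This is most transparently seen by just invoking Kummer's theorem for the partial sums and noting that the valuation of the multinomial is manifestly symmetric (it is defined by $|a|!/\prod a_i!$), so whatever it counts cannot depend on the chosen bracketing; alternatively one can cite Lucas' theorem directly, since the product $\prod_s \binom{|a|_s}{(a_2)_s,\dots,(a_k)_s}$ over base-2 digit positions $s$ is nonzero mod 2 iff each digit-wise multinomial is $1$, i.e.\ iff in each position the digits $(a_i)_s$ sum to at most $1$ with no carry. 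I expect this bookkeeping to be the main (and only mild) obstacle; everything else is a direct unwinding of definitions.
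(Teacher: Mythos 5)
Your proposal is correct, but it takes a different route from the paper. You reduce the multinomial statement to the binomial case via the telescoping factorization $\binom{|a|}{a_2,\dots,a_k}=\prod_{l=3}^{k}\binom{a_2+\cdots+a_l}{a_l}$ and then invoke Kummer's theorem for each binomial factor, which forces you to deal with the (real but mild) bookkeeping issue of why the accumulated carry count is independent of the order of summation; your two suggested resolutions (symmetry of the valuation of $|a|!/\prod a_i!$, or a digit-wise Lucas argument) both work, though the Lucas route only directly yields the equivalence $(1)\Leftrightarrow(3)$ rather than the full valuation formula. The paper instead applies Legendre's formula $\nu_2(n!)=n-s_2(n)$ directly to the quotient of factorials, obtaining $\nu_2\binom{|a|}{a}=\sum_i s_2(a_i)-s_2(|a|)$ in one line and then identifying this quantity with the carry count (each carry reduces the total number of nonzero digits by one). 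The paper's approach is marginally cleaner in that it sidesteps the order-independence question entirely — the formula $\sum_i s_2(a_i)-s_2(|a|)$ is manifestly symmetric — while your approach has the advantage of resting only on the classical binomial Kummer theorem as a black box. Your handling of the equivalences $(1)\Leftrightarrow(2)\Leftrightarrow(3)$ matches the paper's and is complete.
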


\begin{proof}
  The Legendre formula for the 2-adic valuation of a factorial $n!$ states that $\nu_2(n!)=n-s_2(n)$ where $s_2$ is the sum of the digits in the base-2 expansion of $n$. The latter is simply the number of 1s in the base-2 expansion of $n$. So, for the multinomial coefficient, we compute the 2-adic valuation as follows:
  \begin{align*}
    \nu_2\binom{|a|}{a_2,\dots,a_k}&=\nu_2\left(\frac{|a|!}{a_2!\cdots a_k!}\right) =\nu_2(|a|!)-\nu_2(a_2!\cdots a_k!)\\
    &=|a|-s_2(|a|)-\left(|a|-s_2(a_2)-\cdots-s_2(a_k)\right)=\sum_{i=2}^ks_2(a_i)-s_2(|a|).
  \end{align*}
  We see that the 2-adic valuation of the multinomial coefficient is the sum of nonzero digits in the $a_2,\dots,a_k$ minus the nonzero digits in $|a|$. But this is equal to the number of carry operations (since each carry operation reduces the number of nonzero digits by 1). The equivalent characterizations of mod~2 nonvanishing of the multinomial coefficient follows directly from this.
\end{proof}

\begin{example}
  The base-2 expansion of $a=(2,5,8)$ is $[a]_2=(10,101,1000)$. Their sum is $1111$, which needs no carrying. The corresponding multinomial coefficient is $\binom{|a|}{a}=135135$ which is odd.	
\end{example}

For a given sequence $a=(a_2\stb a_k)$, let $\hat a_j$ denote the sequence \[\hat{a}_j=(a_2\stb a_j-1\stb a_k).\] 
We will need a lemma about `consecutive` multinomial coefficients mod~2:
\begin{lemma}\label{lem:consecutive_multinomial}
  The following relations hold for consecutive multinomial coefficients mod~2:
  \begin{enumerate}
  \item If $\binom{|a|}{a}\equiv 1\bmod 2$, then there is a unique $l$, such that $\binom{|a|-1}{\hat a_l}\equiv 1$. If $2^p$ is the largest two-power dividing all $a_i$, then $l$ is the unique index for which $2^{p+1}$ does not divide $a_l$.
  \item If $\binom{|a|}{a}\equiv 0\bmod 2$ and $\binom{|a|-1}{\hat a_j}\equiv 1\bmod 2$, then there is a unique $l\neq j$, such that $\binom{|a|-1}{\hat a_l}\equiv 1\bmod 2$. If $2^p$ is the largest two-power dividing all $a_i$, then $l$ is the unique index besides $j$ for which $2^{p+1}$ does not divide $a_l$.
  \end{enumerate}
\end{lemma}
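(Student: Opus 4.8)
The plan is to translate everything, via Lemma~\ref{lem:multinomial}, into the combinatorics of base-2 expansions: $\binom{|b|}{b}\equiv 1\bmod 2$ exactly when the base-2 expansions of the entries of $b$ are pairwise disjoint. Write $\nu_2$ for the 2-adic valuation and set $p=\nu_2(\gcd(a_2\stb a_k))$, so that $2^p$ divides every $a_i$ while the bit in position $p$ is set in at least one of them. Throughout, if $a_r=0$ we read $\binom{|a|-1}{\hat a_r}$ as $0$, so we only need to consider indices $r$ with $a_r\geq 1$.

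The heart of the argument is a description of how subtracting $1$ from the $r$-th entry affects the bit pattern. If $2^{p+1}\mid a_r$ (so position $p$ is not a bit of $a_r$), then $a_r-1$ carries every bit in positions $0,\dots,\nu_2(a_r)-1$, in particular the position-$p$ bit; since some other index $i_0$ also has its position-$p$ bit set (by choice of $p$, and $i_0\neq r$ because $2^{p+1}\mid a_r$), the entries $a_r-1$ and $a_{i_0}$ of $\hat a_r$ clash, so $\binom{|a|-1}{\hat a_r}\equiv 0$. If instead $2^{p+1}\nmid a_r$ (so $p$ is the lowest bit of $a_r$), then $a_r-1$ is $a_r$ with the position-$p$ bit removed and the bits $0,\dots,p-1$ added, and those low bits cannot clash with any other $a_i$, all of which are divisible by $2^p$. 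We obtain the criterion: $\binom{|a|-1}{\hat a_r}\equiv 1\bmod 2$ if and only if position $p$ is a bit of $a_r$ \emph{and} the tuple $a^{(r)}$ obtained from $a$ by deleting the position-$p$ bit of the $r$-th entry has pairwise disjoint base-2 expansions.

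Part (1) is then immediate: if $\binom{|a|}{a}\equiv 1$ the entries of $a$ have disjoint expansions, so exactly one entry $a_l$ has the position-$p$ bit; deleting one bit preserves disjointness, so $a^{(l)}$ is disjoint and the criterion gives $\binom{|a|-1}{\hat a_l}\equiv 1$, while any $r\neq l$ fails the criterion because $a_r$ has no position-$p$ bit. For part (2), the hypothesis $\binom{|a|-1}{\hat a_j}\equiv 1$ says, by the criterion, that $a_j$ has a position-$p$ bit and $a^{(j)}$ is disjoint; but $a$ itself is not disjoint since $\binom{|a|}{a}\equiv 0$, and as $a^{(j)}$ differs from $a$ only by removing the position-$p$ bit of entry $j$, every clash in $a$ must occur at position $p$ and involve entry $j$, and there can be only one other entry $l$ in such a clash, for two of them would still clash in $a^{(j)}$. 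Thus the indices carrying a position-$p$ bit are exactly $j$ and $l$; then $a^{(l)}$ is disjoint, so $\binom{|a|-1}{\hat a_l}\equiv 1$, and any $r\notin\{j,l\}$ fails the criterion. In both parts the index produced is, by construction, the unique index (resp.\ the unique index $\neq j$) with $2^{p+1}\nmid a_l$, as claimed.

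The only real subtlety is the bit-subtraction case analysis above: one must notice that subtracting $1$ from an entry divisible by $2^{p+1}$ borrows all the way down past position $p$ and so unavoidably collides with whichever entry realizes the lowest bit of $\gcd(a_2\stb a_k)$ --- this is exactly what makes the criterion an equivalence rather than merely a sufficient condition. Everything else is bookkeeping with disjoint bit-sets.
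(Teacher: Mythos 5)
Your proof is correct and follows essentially the same route as the paper's: translate parity of the multinomial coefficients into disjointness of base-2 expansions via Lucas' theorem, and then analyse how subtracting $1$ from an entry rearranges its bits relative to the globally lowest set bit (your position $p=\nu_2(\gcd(a_2\stb a_k))$, the paper's ``right-most digit 1''). Your intermediate criterion is a slightly more systematic packaging of the same bit-manipulation argument, and the edge-case handling ($a_r=0$) is a welcome extra precaution.
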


\begin{proof}
  (1) By Lemma \ref{lem:multinomial}, the numbers $a_2,\dots,a_k$ have disjoint base-2 expansions. Decreasing one such number by 1 means changing the right-most digit 1 in the base-2 expansion to a 0, and changing all the zeros to the right of this position to 1s. The only way this doesn't destroy disjointness between the base-2 expansions is if we do this to the right-most digit 1 in any of the base-2 expansions of $a_2,\dots,a_k$. Since by assumption the base-2 expansions of the $a_2,\dots,a_k$ are disjoint, there is a unique $j$ where the right-most digit 1 appears. This argument also implies the statement about the largest two-power dividing $a_i$.
	
  (2) By Lemma \ref{lem:multinomial}, the base-2 expansions of $(a_2\stb a_j-1\stb a_k)$ are disjoint, but the base-2 expansions of $(a_2\stb a_j\stb a_k)$ are not. In $a_j$, there is exactly one nonzero bit which was zero in $a_j-1$ (this holds for any number, the nonzero bit is the last carry).  Since the $(a_2\stb a_j-1\stb a_k)$ are disjoint, but $(a_2\stb a_j\stb a_k)$ are not, the new nonzero bit in $a_j$ is nonzero in exactly one other $a_l$. So the largest two-power $2^p$ dividing $a_j$ also divides $a_l$, but $2^{p+1}$ does not. Therefore  $(a_2\stb a_l-1\stb a_k)$ are also disjoint.
\end{proof}

\begin{example}
  In the base-2 expansions of $a=(2,5,8)$, if we decrease 10, we get overlap between 1 and 101 and the corresponding multinomial coefficient is even. If we decrease 1000, we get 111 and overlap with both 10 and 101, and an even multinomial coefficient. Only if we decrease 101, we get 100 which doesn't overlap with any of the others. 
	
	From the lemma, for any nonzero multinomial coefficient we get a unique chain of decreasing multinomial coefficients which are nonzero mod~2:
	\[(2,5,8)\to (2,4,8)\to(1,4,8)\to (4,8)\to(3,8)\to\ldots\to (0,8)\]
\end{example}

The following proposition is the key step in proving the relation between the $q_i$'s.
\begin{proposition}\label{prop:multinomial_sums}
  If for a sequence $a=(a_2\stb a_k)$, we have $\sum_{i=2}^kia_i=2^t$, then
  \[\binom{|a|}{a} = \sum_{j=1}^{\lfloor k/2 \rfloor}\binom{|a|-1}{\hat{a}_{2j}} \]
\end{proposition}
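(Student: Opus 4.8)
The plan is to interpret both sides combinatorially and match them term by term using the ``decreasing chain'' structure from Lemma~\ref{lem:consecutive_multinomial}. The key observation is that $\sum_{i=2}^k i a_i = 2^t$ is a power of two, which is the arithmetic condition that forces carrying to happen in a controlled way. First I would split into two cases according to whether $\binom{|a|}{a}\equiv 1$ or $\equiv 0 \bmod 2$, so that we only need to verify an identity mod~2.

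\textbf{Case $\binom{|a|}{a}\equiv 1$.} Here the LHS is $1$, so I must show exactly one even index $2j$ contributes $\binom{|a|-1}{\hat a_{2j}}\equiv 1$. By Lemma~\ref{lem:consecutive_multinomial}(1), the $a_i$ have disjoint base-2 expansions, and there is a \emph{unique} index $l$ (over all indices $2,\dots,k$, not just the even ones) with $\binom{|a|-1}{\hat a_l}\equiv 1$; moreover $l$ is the unique index for which $2^{p+1}\nmid a_l$, where $2^p$ is the largest power of two dividing all the $a_i$. So the identity in this case reduces to showing that this unique $l$ is \emph{even}. This is where $\sum i a_i = 2^t$ enters: I would argue that $2$-adic valuations force $l$ to be even. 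Concretely, write $a_i = 2^p b_i$ with the $b_i$ having disjoint base-2 expansions; then $\sum_{i=2}^k i \, 2^p b_i = 2^t$, so $\sum_{i} i b_i = 2^{t-p}$. Since $l$ is the unique index with $b_l$ odd, reducing $\sum_i i b_i$ mod $2$ gives $l \cdot b_l \equiv \sum_i i b_i \equiv 2^{t-p} \bmod 2$; because $t-p \geq 1$ (as $|a|\ge 1$ and at least one $a_i$ is nonzero, while $2^t$ is a ``large'' power — one should check $t>p$ using that $\sum i b_i = 2^{t-p}\ge 2$), the right side is $0$, and $b_l$ odd forces $l$ even. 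This handles the odd case.

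\textbf{Case $\binom{|a|}{a}\equiv 0$.} Now the LHS is $0$, so I must show an even number of the $\binom{|a|-1}{\hat a_{2j}}$ are $\equiv 1$. If none of the $\binom{|a|-1}{\hat a_j}$ (over all $j$) is odd, we are done. Otherwise pick any $j$ with $\binom{|a|-1}{\hat a_j}\equiv 1$; by Lemma~\ref{lem:consecutive_multinomial}(2) there is exactly one other index $l\neq j$ with $\binom{|a|-1}{\hat a_l}\equiv 1$, and these are the only two. So the RHS over \emph{all} indices has exactly two odd terms, $\{j,l\}$. I then need: either both $j,l$ are even, or both are odd (so that the count of odd terms among \emph{even} indices is $2$ or $0$). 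Again the $2^t$ condition provides this via a parity/valuation argument: by Lemma~\ref{lem:consecutive_multinomial}(2), $2^p$ divides both $a_j$ and $a_l$ but $2^{p+1}$ divides neither, while $2^{p+1}$ divides all other $a_i$; writing $a_i = 2^p b_i$ with exactly $b_j, b_l$ odd and all other $b_i$ even, $\sum_i i b_i = 2^{t-p}$ reduces mod~2 to $j + l \equiv 2^{t-p} \equiv 0 \bmod 2$ (using $t-p\geq 1$ again), so $j$ and $l$ have the same parity, as required.

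\textbf{Main obstacle.} The routine parts are the bookkeeping with Lemma~\ref{lem:consecutive_multinomial} and the mod-2 reductions of $\sum i a_i$. The step that needs genuine care is verifying $t - p \geq 1$ in both cases, i.e.\ that the largest power of two dividing \emph{all} the $a_i$ is strictly smaller than $2^t$ — equivalently that $\sum_i i b_i \geq 2$ after dividing out $2^p$. This should follow since $\sum_i i b_i = 2^{t-p}$ is a positive power of two, and it equals $1$ only if exactly one $b_i$ equals $1$ with index $i=1$, which is impossible as indices start at $2$; hence $\sum_i i b_i \ge 2$ and $t > p$. One should also double-check the edge case where some $a_i$ could themselves be forced to be zero and confirm the counting of odd terms on the RHS is genuinely over $j \in \{1,\dots,\lfloor k/2\rfloor\}$ corresponding to indices $2j \in \{2,4,\dots\}$, matching the indexing $\hat a_{2j}$ in the statement.
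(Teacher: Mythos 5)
Your proposal is correct and follows essentially the same route as the paper: split on the parity of $\binom{|a|}{a}$, invoke Lemma~\ref{lem:consecutive_multinomial} to locate the (one or two) indices $l$ with $\binom{|a|-1}{\hat a_l}\equiv 1$, and use the hypothesis $\sum_i ia_i=2^t$ via a $2$-adic valuation argument to force those indices to be even (resp.\ of equal parity). The only cosmetic difference is that you divide out the common factor $2^p$ and reduce mod $2$, whereas the paper reduces $\sum_i ia_i$ mod $2^{p+1}$ directly; your check that $t>p$ is exactly the paper's observation that $\sum_i ia_i=2^p$ is impossible because the indices start at $2$.
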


\begin{proof}
  There are two cases depending on the parity of $\binom{|a|}{a}$.
	
  If $\binom{|a|}{a}\equiv 1\bmod 2$, then by part (1) of Lemma~\ref{lem:multinomial}, there is a unique $l$ such that $\binom{|a|-1}{\hat{a}_{l}}$ is also $\equiv 1\bmod 2$. We claim that $l$ is even, so that a unique term on the right-hand side is nonzero. If $2^p$ is the largest 2-power dividing all $a_i$, then $a_l$ is the unique $a_i$ not divided by $2^{p+1}$ by Lemma~\ref{lem:multinomial}. If $l$ is odd, then $\sum_{i=2}^k ia_i\equiv 2^p\bmod 2^{p+1}$. Since the left-hand side is equal to $2^n$, this implies $2^p=2^n$, but this is impossible since $l>1$.
	
  Assume now that $\binom{|a|}{a}\equiv 0\bmod 2$. It is enough to show that if $\binom{|a|-1}{\hat{a}_{2j}}\equiv 1$, then there is a unique $l$, such that $\binom{|a|-1}{\hat{a}_{l}}\equiv 1$ and that $l$ is even. The unique existence of $l$ is given by part (2) of Lemma~\ref{lem:multinomial}. We now show that $l$ is even. By part (2) of Lemma~\ref{lem:multinomial}, $2^p$ divides both $a_{2j}$ and $a_l$, but $2^{p+1}$ does not. If $l$ is odd, then since $2j$ is even, $\sum_{i=2}^kia_i\equiv 2^p$ mod $2^{p+1}$, and we can conclude as above.
\end{proof}

\subsection{Explicit relations between $q_i$}
For the description of the Koszul homology, we need to understand the $W_2=\F_2[w_2\stb w_k]$-relations between the $q_j$. Using the above results on mod~2 multinomial coefficients, we now obtain the following syzygy between the $q_i$'s for $n=2^t$ and arbitrary $k$:

\begin{theorem}\label{thm:twopower_syzygies}
  The following relation between the $q_j$ holds if and only if $n=2^t$:
  \begin{equation}\label{eq:two_power_relation}
    \sum_{i \text{ even}}q_{n-i}w_{i}=\sum_{i>1 \text{ odd}} q_{n-i}w_{i}=0.
  \end{equation}
\end{theorem}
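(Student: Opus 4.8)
The idea is to restate the claim as a single polynomial identity and then prove both implications by reading off the coefficient of a well-chosen monomial. Writing $w_0:=1$ and $w_1:=0$, the recursion~\eqref{eq:recursion} says precisely that $\sum_{i\geq 0}w_iq_{n-i}=0$ in $W_2$ for every $n\geq 1$. Splitting this sum into its even-index part (which, with $w_0=1$, is $q_n+w_2q_{n-2}+w_4q_{n-4}+\cdots$) and its odd-index part (which, since $w_1=0$, is $w_3q_{n-3}+w_5q_{n-5}+\cdots$) shows that the two sums appearing in~\eqref{eq:two_power_relation} are always equal to each other over $\F_2$. Hence the theorem reduces to proving that $S_n:=\sum_{i\text{ even}}w_iq_{n-i}$ (the $i=0$ term being $q_n$) vanishes if and only if $n=2^t$. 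Throughout we take $k\geq 3$, which is the relevant range; for $k\leq 2$ one checks directly that $S_n\equiv 0$ for all $n$.

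For the ``if'' direction, let $n=2^t$ and fix a monomial $w^a=\prod_{i=2}^kw_i^{a_i}$. Using the closed form~\eqref{eq:multinomial}, the coefficient of $w^a$ in $w_{2j}q_{n-2j}$ is the coefficient of $w^{\hat a_{2j}}$ in $q_{n-2j}$, which equals $\binom{|a|-1}{\hat a_{2j}}$ when $\sum_i ia_i=n$ and $a_{2j}\geq 1$, and is $0$ otherwise (adopting the standard convention that a multinomial coefficient with a negative entry is $0$); likewise the coefficient of $w^a$ in the $i=0$ term $q_n$ is $\binom{|a|}{a}$ when $\sum_i ia_i=n$. Thus if $\sum_i ia_i\neq n$ the coefficient of $w^a$ in $S_n$ vanishes, while if $\sum_i ia_i=n=2^t$ it equals $\binom{|a|}{a}+\sum_{j=1}^{\lfloor k/2\rfloor}\binom{|a|-1}{\hat a_{2j}}$, which is $0$ in $\F_2$ by Proposition~\ref{prop:multinomial_sums}. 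Hence $S_n=0$, and therefore the odd-index sum vanishes as well.

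For the ``only if'' direction I would argue by contraposition: given $n\geq 1$ with $n\neq 2^t$, exhibit a monomial with nonzero coefficient in $S_n$. If $3\mid n$ (which forces $n\geq 3$, hence $n\neq 2^t$ automatically), the monomial $w_3^{n/3}$ is divisible by no $w_{2j}$ with $j\geq 1$, so its coefficient in $S_n$ equals its coefficient in $q_n$, namely $\binom{n/3}{n/3}=1$. If instead $3\nmid n$, write $n=2^am$ with $m$ odd; then $m\notin\{1,3\}$, so $m\geq 5$, and put $c:=2^a$ and $b:=(n-3c)/2$. Since $n-3c=2^a(m-3)$ with $m-3$ a positive even integer, $b$ is a positive integer divisible by $2^a$. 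The monomial $w_2^bw_3^c$ has degree $2b+3c=n$, and only the $i=0$ and $i=2$ terms of $S_n$ contribute to its coefficient (no $w_{2j}$ with $j\geq 2$ divides $w_2^bw_3^c$), which is therefore $\binom{b+c}{b,c}+\binom{b+c-1}{b-1,c}=\binom{b+c-1}{c-1}$ in $\F_2$ by Pascal's rule. Since $c-1=2^a-1$ has binary digits exactly $\{0,\dots,a-1\}$ while $2^a\mid b$, the base-$2$ expansions of $c-1$ and $b$ are disjoint, so this coefficient is $1$ by Lucas' theorem (Lemma~\ref{lem:multinomial}). Thus $S_n\neq 0$.

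The substantive part is the ``only if'' direction, and its one real difficulty is guessing a witness monomial that works uniformly in $n$; once the choice $c=2^{\nu_2(n)}$ (respectively $w_3^{n/3}$ when $3\mid n$) is found, the verification is just Pascal's identity together with Lucas' theorem, both already available. The ``if'' direction is essentially a bookkeeping translation of Proposition~\ref{prop:multinomial_sums} into a statement about coefficients of the $q_j$, so the combinatorial work there has already been done in the lemmas preceding the theorem; I expect no obstacle there beyond carefully matching the convention for multinomial coefficients with negative entries.
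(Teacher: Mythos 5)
Your proof is correct. The ``if'' direction coincides with the paper's: after using the recursion \eqref{eq:recursion} to see that the even and odd partial sums agree over $\F_2$, the vanishing for $n=2^t$ is exactly the coefficientwise statement of Proposition~\ref{prop:multinomial_sums}, and your bookkeeping with the convention for $\hat a_{2j}$ when $a_{2j}=0$ is the right way to make that reduction precise. The ``only if'' direction is where you genuinely diverge: the paper writes an odd $n\geq 5$ as $n=i+j$ with $i\geq 2$ even and $j\geq 3$ odd, shows the monomial $w_iw_j$ survives in the sum, and then handles a general non-two-power $m=2^s n$ by squaring to get $w_i^{2^s}w_j^{2^s}$; you instead exhibit the witness $w_3^{n/3}$ when $3\mid n$ and $w_2^b w_3^c$ with $c=2^{\nu_2(n)}$, $b=(n-3c)/2$ otherwise, reducing the coefficient to $\binom{b+c-1}{c-1}$ via Pascal's rule and checking oddness with Lemma~\ref{lem:multinomial}. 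Your route buys something concrete: it uses only the variables $w_2,w_3$, so it works verbatim for every $k\geq 3$, whereas the paper's witness $w_iw_j$ implicitly requires $i,j\leq k$ (for instance, for $k=3$ and $n=7$ no decomposition $7=i+j$ with $i$ even, $j\geq 3$ odd and $i,j\leq 3$ exists, although the conclusion still holds, as your monomial $w_2^2w_3$ shows). Your explicit remark that the ``only if'' direction fails for $k=2$, where $q_n+w_2q_{n-2}=0$ identically, is likewise a worthwhile precision that the theorem statement leaves implicit.
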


\begin{proof}
  Assume that $n=2^t$. By the recursion \eqref{eq:recursion}, we have $\sum_{i=0}^k w_iq_{n-i}=0$, so it is enough to prove the statement about the even part. Since $q_j=\sum_{j=2a_2+\ldots +ka_k} \binom{|a|}{a} w^a$, the relation \eqref{eq:two_power_relation} reduces to a statement about multinomial coefficients, which is Proposition \ref{prop:multinomial_sums}.
	
  For the other direction, first assume that $n$ is odd. We can check that the sum in question is $w_3$ in case $n=3$, so we may assume $n\geq 5$. In that case, we can write $n=i+j$ for $i\geq 2$ even and $j\geq 3$ odd. We claim that the monomial $w_iw_j$ appears in the sum in question. Since $a_i=a_j=1$ and all other $a_l$ are 0, the multinomial coefficient is $0\bmod 2$, hence $w_iw_j$ doesn't appear in $q_n$. Now $w_j$ appears in $q_j$ in the sum in question, but $w_i$ doesn't appear in any $q_{n-2i}$ because $j$ is odd. So the sum in question contains the monomial $w_iw_j$ and is therefore not 0.
	
  For any $m$ which is not a power of 2, we can write $m=2^tn$ for $n$ odd. Now we know from the previous argument that the monomial $w_iw_j$ (with $n=i+j$) appears in the sum for $n$. But then the monomial $w_i^{2^t}w_j^{2^t}$ appears in the sum for $m=2^ti+2^tj$ and we are done.
\end{proof}

We can consider specializations of this formula to different $k$'s via the substitutions $w_{>k}=0$.  The following proposition can be found in \cite[Proposition~3.2]{Fukaya} and \cite[Lemma 2.3 (ii)]{Korbas2015}. We will show how the statements follow from Theorem~\ref{thm:twopower_syzygies}:
\begin{proposition}\label{prop:qvanishing}
  \begin{enumerate}
  \item For $k=2$, $q_{2t+1}=0$ for all $t$.
  \item For $k=3$, $q_{2^t-3}=0$ for all $t$. 
  \item For $k=4$, $q_{2^t-3}=0$ for all $t$.
  \end{enumerate}
\end{proposition}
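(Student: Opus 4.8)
The plan is to deduce each of the three vanishing statements from Theorem~\ref{thm:twopower_syzygies} by specializing the fundamental syzygy to the appropriate number of variables and then isolating a single $q$-term. Recall that the theorem, applied with $n=2^t$, gives two relations: the even one $\sum_{i\text{ even}}q_{n-i}w_i=0$ and the odd one $\sum_{i>1\text{ odd}}q_{n-i}w_i=0$. In each case $k\in\{2,3,4\}$ we work in $W_2=\F_2[w_2,\dots,w_k]$, i.e.\ we set $w_{>k}=0$, so that the sums have very few terms.

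First I would handle $k=2$. Here only $w_2$ survives, so the even relation for $n=2^t$ reads $q_n w_0 + q_{n-2}w_2 = q_n + w_2 q_{n-2}=0$; but the recursion \eqref{eq:recursion} for $k=2$ is exactly $q_j=w_2 q_{j-2}$, so this is no new information. Instead, the statement to prove is $q_{2t+1}=0$, i.e.\ the vanishing of \emph{odd-index} $q$'s — and for $k=2$ this is immediate from the recursion $q_j=w_2q_{j-2}$ together with $q_1=0$ (the coefficient of $w_1$ after setting $w_1=0$), by induction on $t$. So this case does not even need the theorem; I would just note it follows from \eqref{eq:recursion}.

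Next, for $k=3$: setting $w_{>3}=0$, the even relation for $n=2^t$ becomes $q_n\cdot 1 + q_{n-2}w_2=0$, which again is just the recursion $q_n=w_2q_{n-2}$ (since $w_3q_{n-3}$ lives in the odd relation). The useful one is the \emph{odd} relation: $\sum_{i>1\text{ odd}}q_{n-i}w_i$ has, for $k=3$, only the single term $q_{n-3}w_3$. Hence Theorem~\ref{thm:twopower_syzygies} gives $w_3\, q_{2^t-3}=0$ in $W_2=\F_2[w_2,w_3]$. Since $W_2$ is an integral domain and $w_3\neq 0$, this forces $q_{2^t-3}=0$. For $k=4$ I would argue identically: with $w_{>4}=0$, the odd relation $\sum_{i>1\text{ odd}}q_{n-i}w_i=0$ reduces to $q_{n-3}w_3=0$ — because $w_5=0$ kills the only other potential odd term — so again $w_3\,q_{2^t-3}=0$ in $W_2=\F_2[w_2,w_3,w_4]$, which is a domain, whence $q_{2^t-3}=0$.

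The one point requiring (minor) care — and the closest thing to an obstacle — is the bookkeeping of which monomials of the ambient syzygy survive after the substitution $w_{>k}=0$: one must check that no term $q_{n-i}w_i$ with odd $i\le k$ other than $i=3$ contributes, which for $k=3,4$ is clear since the only odd $i$ in range with $i>1$ is $i=3$. After that, the argument is purely formal: the specialized relation has a single nonzero term of the form $w_3 q_{2^t-3}$, and cancellation in a polynomial ring over $\F_2$ forces that term to vanish. I would also remark that the ``only if'' direction of Theorem~\ref{thm:twopower_syzygies} is not needed here; only the forward implication (valid for $n=2^t$) is used.
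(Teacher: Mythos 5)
Your proposal is correct and follows essentially the same route as the paper: extract the odd part $w_3 q_{2^t-3}=0$ of the fundamental syzygy from Theorem~\ref{thm:twopower_syzygies} and cancel $w_3$ using that $\F_2[w_2,\dots,w_k]$ is a domain (the paper phrases this via the UFD property, and likewise dismisses the $k=2$ case as trivial). The only nitpick is your aside that the even relation for $k=3$ ``is just the recursion $q_n=w_2q_{n-2}$'' --- the recursion actually reads $q_n=w_2q_{n-2}+w_3q_{n-3}$, so the even relation is equivalent to the odd one modulo the recursion rather than vacuous --- but this does not affect the argument.
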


\begin{proof}
  Statement (1) is trivial. For $k=3$, Theorem \ref{thm:twopower_syzygies} implies
  \[q_{2^n}+w_2q_{2^n-2}=w_3q_{2^n-3}=0.\]
  Since $\F_2[w_2,w_3]$ is a UFD, $q_{2^n-3}=0$. For $k=4$, Theorem \ref{thm:twopower_syzygies} implies
  \[q_{2^n}+w_2q_{2^n-2}+w_4q_{2^n-4}=w_3q_{2^n-3}=0,\]
  which implies $q_{2^n-3}=0$ as before. 
\end{proof}

We will show in the next sections that for $k=3$, all the syzygies between $(q_{n-2},q_{n-1},q_n)$ are obtained from the vanishing statement in Proposition~\ref{prop:qvanishing} via ascending and descending relations. However, in the cases $n=2^t-3,\dots,2^t$, one of these relations is ``inessential'', i.e., is contained in the image of the boundary map of the Koszul complex and therefore doesn't contribute to the presentation of $K$ as $C$-module. As we will discuss in Section~\ref{sec:beyondk3}, the $k=4$ case is somewhat similar in that most of the time we have two relations produced from Proposition~\ref{prop:qvanishing} via ascending and descending relations. 

\subsection{Descending relations}
In what follows, $k$ is fixed, and we will assume that $q_i$ is a sequence in $W_2$ (or a $W_2$-module) satisfying the recursive relationship:
\begin{equation}\label{eq:q_recursion}
	q_{i}=\sum_{j=2}^k w_j q_{i-j}.
\end{equation}

Then given a relation between $(q_{n-k+1}\stb q_n)$, there is also a relation between $(q_{n-k}\stb q_{n-1})$ as the following proposition describes. For our application, these $q_i$'s are the relations defining the characteristic subrings $C_k(n)$ and $C_k(n-1)$, respectively.

\begin{proposition}
  If $(q_i)$ satisfies \eqref{eq:q_recursion} and
  \[\sum_{j=0}^{k-1}\al_j^0 q_{n-j}=0\]
  is a homogeneous relation in degree $d$ between $(q_{n-k+1}\stb q_n)$, then
  \begin{equation}\label{eq:descending_relation}
    \sum_{j=0}^{k-1}\al_j^i q_{n-i-j}=0
  \end{equation}
  is a homogeneous relation in degree $d$ between $(q_{n-i-k+1}\stb q_{n-i})$. Here, the sequence of doubly-indexed polynomials $(\al_0^i)$ is defined by a double recursion starting with $\al_0^{<0}=0$, a recursion for the $\alpha_0^i$ given by 
  \begin{equation}\label{eq:al0_recursion}
    \al_{0}^{i+1}=\sum_{r=1}^{k-1}w_{k+1-r}\al_0^{i+r-k},
  \end{equation}
  and a recursion for the $\al_r^i$ defined by
  \begin{equation}\label{eq:alphaij_alpha_0j}
    \al_{k-j}^{i}=\sum_{r=0}^{j-1}w_{k-r}\al_0^{i+r-j}.
  \end{equation}
\end{proposition}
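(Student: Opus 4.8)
The plan is to set up the recursions so that the stated doubly-indexed identity is literally the statement that the ``descended'' linear combination of the $q_j$'s is unchanged (up to shifting the index and using the basic recursion \eqref{eq:q_recursion}). The key observation is that any homogeneous relation $\sum_{j=0}^{k-1}\alpha_j q_{n-j}=0$ can be rewritten by substituting the recursion $q_n=\sum_{l=2}^k w_l q_{n-l}$ for the top term $q_n$; this replaces the relation by another one supported on $(q_{n-k},\dots,q_{n-1})$, and a single such substitution is exactly the ``descent'' step. So first I would fix notation: write $\alpha^0=(\alpha_0^0,\dots,\alpha_{k-1}^0)$ for the coefficient vector of the given relation, and more generally let $\alpha^i=(\alpha_0^i,\dots,\alpha_{k-1}^i)$ be the coefficient vector obtained after $i$ successive descents, so that by construction $\sum_{j=0}^{k-1}\alpha_j^i q_{n-i-j}=0$.

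Second, I would derive the recursion for $\alpha^{i+1}$ in terms of the previous vectors. Starting from $\sum_{j=0}^{k-1}\alpha_j^i q_{n-i-j}=0$, substitute $q_{n-i}=\sum_{l=2}^k w_l q_{n-i-l}$ into the $j=0$ term. The coefficient of $q_{n-i-m}$ in the new relation (for $1\le m\le k$) is $\alpha_m^i$ (from the old $j=m$ term, if $m\le k-1$) plus $w_m\alpha_0^i$ (from the substitution), so after reindexing $n-i-1$ as the new top index, the new coefficient vector is determined by $\alpha_j^{i+1}=\alpha_{j+1}^i+w_{j+1}\alpha_0^i$ for $0\le j\le k-2$ and $\alpha_{k-1}^{i+1}=w_k\alpha_0^i$. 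From this one-step rule, I would then extract by a short induction the two displayed recursions: the recursion \eqref{eq:alphaij_alpha_0j} expressing every $\alpha_{k-j}^i$ as an explicit combination of earlier $\alpha_0$'s (iterating $\alpha_j^{i+1}=\alpha_{j+1}^i+w_{j+1}\alpha_0^i$ downward until one hits the $\alpha_{k-1}=w_k\alpha_0$ boundary), and then the self-contained recursion \eqref{eq:al0_recursion} for the $\alpha_0^i$ alone, obtained by feeding \eqref{eq:alphaij_alpha_0j} back into the $j=0$ case of the one-step rule. I would also note that the degree is preserved at each step, since $w_l$ has degree $l$ and $q_{m}$ has degree $m$, so $\deg\alpha_l^i = d-(n-i-l)$ shifts consistently; a careful bookkeeping of degrees confirms the relation \eqref{eq:descending_relation} is homogeneous of degree $d$.

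Finally, I would check the base case and the boundary conventions: for $i=0$ the vector $\alpha^0$ is the given one, and setting $\alpha_0^{<0}=0$ makes the recursions \eqref{eq:al0_recursion} and \eqref{eq:alphaij_alpha_0j} produce the correct ``ramp-up'' in the first few steps (where fewer than $k-1$ previous $\alpha_0$'s are available); one should verify that with these conventions the one-step rule I derived agrees with \eqref{eq:al0_recursion}–\eqref{eq:alphaij_alpha_0j} for all $i\ge 0$. The main obstacle I anticipate is purely bookkeeping: reconciling the indices in the two given recursions (which are stated in a slightly compressed form, with $\alpha_0^{i+r-k}$ and $\alpha_0^{i+r-j}$) with the clean one-step rule $\alpha_j^{i+1}=\alpha_{j+1}^i+w_{j+1}\alpha_0^i$, and making sure the $w_{k+1-r}$ versus $w_{k-r}$ weightings come out right — this is a finite but fiddly induction on $j$ and $i$, with no conceptual difficulty once the one-step substitution is in place.
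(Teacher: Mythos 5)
Your proposal is correct and follows essentially the same route as the paper: substitute the recursion \eqref{eq:q_recursion} for the top term $q_{n-i}$, reindex to get the one-step rule $\al_j^{i+1}=\al_{j+1}^i+w_{j+1}\al_0^i$ (with $w_1=0$ and $\al_k^i=0$), iterate by induction, and then unwind this rule to obtain \eqref{eq:al0_recursion} and \eqref{eq:alphaij_alpha_0j}. The "fiddly" index bookkeeping you flag is exactly what the paper's proof carries out, so no gap remains.
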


\begin{proof}
  Using the recursion \eqref{eq:q_recursion} for $q_n$, write
  \[
  \sum_{j=0}^{k-1}\al_j^0 q_{n-j}=
  \sum_{j=1}^{k}(\al_0^0w_j+\al_j^0)q_{n-j}=\sum_{j=0}^{k-1}(\al_0^0w_{j+1}+\al_{j+1}^0)q_{n-1-j}
  \]
  with $w_1=0$ and $\al^0_k=0$. By induction, setting \begin{equation}\label{eq:alphaij_recursion}
    \al^{i+1}_j=w_{j+1}\al^{i}_0+\al^i_{j+1}
  \end{equation}
  for $j=0\stb k-1$ and $\al_k^{i+1}=0$ we get a relation between $(q_{n-i}\stb q_{n-i-k+1})$:
  \[\sum_{j=0}^{k-1}\al_j^i q_{n-i-j}=0.\]
  Using \eqref{eq:alphaij_recursion},	the coefficients $\al_j^i$ can also be expressed recursively from the sequence $(\al_0^i)$:
  \[
  \al_{k-1}^{i+1}=w_k\al_0^i,\qquad \al_{k-2}^{i+1}=w_{k-1}\al_0^i+\al_{k-1}^{i}=w_{k-1}\al_0^i+w_{k}\al_{0}^{i-1},
  \]
  and by induction, we obtain \eqref{eq:alphaij_alpha_0j}.
\end{proof}

\begin{corollary}
  \label{cor:descending-k3}
  For $k=3$, the following relation in degree $m=2^t-3$ holds for all $n\leq m$ and $i=m-n$:
  \begin{equation}\label{eq:descending_3}
    q_iq_{n}+q_{i+1}q_{n-1}+w_3q_{i-1}q_{n-2}=0
  \end{equation}
  This is a relation between the relations $(q_n,q_{n-1}, q_{n-2})$ defining the characteristic subring $C_3(n)$.
\end{corollary}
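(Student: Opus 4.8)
The plan is to specialize the descending-relations construction of the preceding Proposition to the single vanishing relation $q_{2^t-3}=0$ from Proposition~\ref{prop:qvanishing}(2), and then to recognize the resulting coefficients as shifted, respectively $w_3$-scaled, copies of the $q$-polynomials.

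First I set $m:=2^t-3$ and regard $q_m=0$ as the homogeneous relation of degree $m$ between $(q_{m-2},q_{m-1},q_m)$ with coefficients $\al_0^0=1$ and $\al_1^0=\al_2^0=0$. Applying the previous Proposition produces, for every $i\geq 0$, a homogeneous degree-$m$ relation $\al_0^i q_{m-i}+\al_1^i q_{m-i-1}+\al_2^i q_{m-i-2}=0$ among $(q_{m-i-2},q_{m-i-1},q_{m-i})$, i.e.\ among the relations defining $C_3(m-i)$; it then remains to evaluate the $\al_j^i$.

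For $k=3$ the recursion \eqref{eq:alphaij_recursion}, with $w_1=0$, reads $\al_0^{i+1}=\al_1^i$, $\al_1^{i+1}=w_2\al_0^i+\al_2^i$ and $\al_2^{i+1}=w_3\al_0^i$; equivalently the closed forms \eqref{eq:al0_recursion} and \eqref{eq:alphaij_alpha_0j} become $\al_0^i=w_2\al_0^{i-2}+w_3\al_0^{i-3}$, $\al_2^i=w_3\al_0^{i-1}$ and $\al_1^i=w_2\al_0^{i-1}+w_3\al_0^{i-2}$, all valid for every $i\geq 0$ under the convention $\al_0^{<0}=0$ (the starting relation being as simple as possible, no exceptional small-$i$ terms occur). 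The recursion for $\al_0^i$ together with $\al_0^0=1$ and $\al_0^{<0}=0$ is precisely the defining recursion $q_j=w_2q_{j-2}+w_3q_{j-3}$ with $q_0=1$, $q_{<0}=0$, so $\al_0^i=q_i$ for all $i$. Substituting this gives $\al_2^i=w_3q_{i-1}$ and $\al_1^i=w_2q_{i-1}+w_3q_{i-2}=q_{i+1}$, the last equality being once more the $q$-recursion.

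Finally, replacing $n=m-i$ (so that $i=m-n\geq 0$, which is exactly the hypothesis $n\leq m$) turns the level-$i$ relation into $q_iq_n+q_{i+1}q_{n-1}+w_3q_{i-1}q_{n-2}=0$, as asserted. The only delicate point is the bookkeeping in the third paragraph: one must confirm that the closed-form coefficient recursions hold down to $i=0$ and that the initial data of the $\al_0^i$-recursion matches that of the $q$-recursion exactly; once this is checked, identifying the coefficients with $q_i$, $q_{i+1}$ and $w_3q_{i-1}$ is immediate and no further obstacle remains.
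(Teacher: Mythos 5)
Your proposal is correct and follows essentially the same route as the paper: both apply the descending-relations proposition to the starting relation $q_{2^t-3}=0$ with $\al_0^0=1$, $\al_1^0=\al_2^0=0$, identify $\al_0^i=q_i$ by matching recursions and initial data, and then read off $\al_1^i=q_{i+1}$ and $\al_2^i=w_3q_{i-1}$ from \eqref{eq:alphaij_alpha_0j}. The extra bookkeeping you flag (validity of the closed forms down to $i=0$, consistency with $q_1=0$ and $q_{<0}=0$) checks out and is implicit in the paper's argument.
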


\begin{proof}
  For $k=3$, by Fukaya's theorem (Proposition~\ref{prop:qvanishing}), $q_{2^t-3}=0$ is a relation. We set $\al_0^0=1=q_0$, $\al_1^0=\al_2^0=0$, $\al_0^{<0}=0$. The recursion \eqref{eq:al0_recursion} is $\al_{0}^{i}=w_3\al_0^{i-3}+w_2\al_0^{i-2}$. This is the same recursion defining $(q_i)$, cf.~\eqref{eq:q_recursion}, so that $\al_0^i=q_i$ for all $i$. Then by \eqref{eq:alphaij_alpha_0j},
  \[
  \al_1^i=w_3\al_0^{i-2}+w_2\al_0^{i-1}=w_3q_{i-2}+w_2q_{i-1}=q_{i+1},
  \]
  and
  \[\al_2^i=w_3\al_0^{i-1}=w_3q_{i-1}
  \]
  Therefore the relation \eqref{eq:descending_relation} is of the form \eqref{eq:descending_3}.
\end{proof}

\begin{corollary}
  \label{cor:descending_4}
  For $k=4$, the following relation in degree $m=2^t-3$ holds for all $n\leq m$ and $i=m-n$:
  \begin{equation}\label{eq:descending_4}
    q_iq_{n}+q_{i+1}q_{n-1}+(q_{i+2}+w_2q_{i})q_{n-2}+(q_{i+3}+w_2q_{i+1}+w_3q_i)q_{n-3}=0
  \end{equation}
  This is a relation between the relations $(q_n,q_{n-1}, q_{n-2}, q_{n-3})$ defining the characteristic subring $C_4(n)$.
\end{corollary}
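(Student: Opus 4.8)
The plan is to derive Corollary~\ref{cor:descending_4} from the general descending-relation mechanism in exactly the same way that Corollary~\ref{cor:descending-k3} was derived for $k=3$. The starting point is Proposition~\ref{prop:qvanishing}(3), which tells us that $q_{2^t-3}=0$ is a homogeneous relation between $(q_{n-3},q_{n-2},q_{n-1},q_n)$ when $n=2^t-3$ (the trivial relation where $\al_0^0=1$, $\al_1^0=\al_2^0=\al_3^0=0$). So I would set $\al_0^0=1=q_0$, $\al_r^0=0$ for $r=1,2,3$, and $\al_0^{<0}=0$, then feed this into the descending recursion.

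First I would identify the recursion for $\al_0^i$. Specializing \eqref{eq:al0_recursion} to $k=4$ gives $\al_0^{i+1}=w_4\al_0^{i-3}+w_3\al_0^{i-2}+w_2\al_0^{i-1}$, i.e.\ exactly the defining recursion \eqref{eq:q_recursion} for the sequence $(q_i)$ with $k=4$; combined with the matching initial conditions $\al_0^0=1=q_0$ and $\al_0^{<0}=0=q_{<0}$, this forces $\al_0^i=q_i$ for all $i$. Next I would compute the higher coefficients from \eqref{eq:alphaij_alpha_0j}. For $k=4$ this reads $\al_{4-j}^i=\sum_{r=0}^{j-1}w_{4-r}\al_0^{i+r-j}$, so:
\begin{align*}
\al_3^i&=w_4\al_0^{i-1}=w_4q_{i-1},\\
\al_2^i&=w_4\al_0^{i-2}+w_3\al_0^{i-1}=w_4q_{i-2}+w_3q_{i-1},\\
\al_1^i&=w_4\al_0^{i-3}+w_3\al_0^{i-2}+w_2\al_0^{i-1}=w_4q_{i-3}+w_3q_{i-2}+w_2q_{i-1}.
\end{align*}
Then the descending relation \eqref{eq:descending_relation} becomes $q_iq_n+\al_1^iq_{n-1}+\al_2^iq_{n-2}+\al_3^iq_{n-3}=0$.

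It remains only to rewrite the coefficients $\al_1^i$, $\al_2^i$, $\al_3^i$ in the form stated in \eqref{eq:descending_4}. Using the recursion $q_{i+1}=w_2q_{i-1}+w_3q_{i-2}+w_4q_{i-3}$ directly yields $\al_1^i=q_{i+1}$. For $\al_2^i$, apply the recursion to $q_{i+2}=w_2q_i+w_3q_{i-1}+w_4q_{i-2}$, whence $w_4q_{i-2}+w_3q_{i-1}=q_{i+2}+w_2q_i$, so $\al_2^i=q_{i+2}+w_2q_i$; similarly $\al_3^i=w_4q_{i-1}=q_{i+3}+w_2q_{i+1}+w_3q_i$ after using $q_{i+3}=w_2q_{i+1}+w_3q_i+w_4q_{i-1}$. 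Substituting these three identities gives precisely relation \eqref{eq:descending_4}, valid for all $n\le m=2^t-3$ with $i=m-n$. I do not expect any genuine obstacle here: this is a mechanical specialization of the already-established descending-relation proposition, the only mild care being the bookkeeping in re-expressing the coefficients via the $k=4$ recursion so that the formula matches the asserted shape.
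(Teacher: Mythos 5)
Your proposal is correct and follows essentially the same route as the paper: start from $q_{2^t-3}=0$ (Proposition~\ref{prop:qvanishing}), observe that the recursion \eqref{eq:al0_recursion} for $\al_0^i$ coincides with the one defining $q_i$ so that $\al_0^i=q_i$, compute $\al_1^i,\al_2^i,\al_3^i$ from \eqref{eq:alphaij_alpha_0j}, and rewrite them via the $k=4$ recursion for the $q_i$ to match \eqref{eq:descending_4}. All three coefficient identities you derive agree with the paper's.
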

\begin{proof}
  For $k=4$, by Proposition \ref{prop:qvanishing}, $q_{2^t-3}=0$ is a relation, so set $\al_0^0=1=q_0$, $\al_1^0=\al_2^0=\al_3^0=0$, $\al_0^{<0}=0$. The recursion \eqref{eq:al0_recursion} is $\al_{0}^{i}=w_4\al_0^{i-4}+w_3\al_0^{i-3}+w_2\al_0^{i-2}$,	which is again the same recursion as the one defining $(q_i)$, cf.~\eqref{eq:q_recursion}, so that $\al_0^i=q_i$ for all $i$. By \eqref{eq:alphaij_alpha_0j}
  \[
  \al_1^i=w_4\al_0^{i-3}+w_3\al_0^{i-2}+w_2\al_0^{i-1}=w_4q_{i-3}+w_3q_{i-2}+w_2q_{i-1}=q_{i+1},
  \]
  \[
  \al_2^i=w_4\al_0^{i-2}+w_3\al_0^{i-1}=w_4q_{i-2}+w_3q_{i-1}=q_{i+2}+w_2q_{i},
  \]
  and
  \[
  \al_3^i=w_4\al_0^{i-1}=q_{i+3}+w_2q_{i+1}+w_3q_{i}.
  \]
  Therefore the relation \eqref{eq:descending_relation} is of the form \eqref{eq:descending_4}.
\end{proof}

\subsection{Ascending relations}

Given a relation between $(q_{n-k+1}\stb q_n)$, we can also get a relation between $(q_{n-k+2}\stb q_{n+1})$ from the recursion $q_i=\sum_{j=2}^kw_jq_{i-j}$, as the following proposition describes. Again, for our application, the $q_i$'s are the relations defining the characteristic subrings $C_k(n)$ and $C_k(n+1)$.

\begin{proposition}
  \label{prop:ascending}
  If $(q_i)$ satisfies \eqref{eq:q_recursion} and
  \[\sum_{j=0}^{k-1}\be_j^0 q_{n-j}=0\]
  is a homogeneous relation in degree $D$ between $(q_{n-k+1}\stb q_n)$, then
  \begin{equation}\label{eq:ascending_relation}
    \sum_{j=0}^{k-1}\be_j^i q_{n+i-j}=0
  \end{equation}
  is a homogeneous relation in degree $D+ki$ between $(q_{n+i-k+1}\stb q_{n+i})$, where the sequence of doubly-indexed polynomials $(\be_j^i)$ is defined by a double recursion starting with  $\beta^{<0}_j=0$ for all $j$:
  \begin{equation}\label{eq:betaij_beta0j}
    \beta^{i+1}_j=\sum_{r=0}^jw_{j-r}w_k^r\be_{k-1}^{i-r}
  \end{equation}
  Note that $w_1=0$ and that \eqref{eq:betaij_beta0j} for $j=k-1$ is a recursive definition of $(\be_{k-1}^i)$:
  \[
  \beta^{i+1}_{k-1}=\sum_{r=0}^{k-1}w_{k-1-r}w_k^r\be_{k-1}^{i-r}
  \]
\end{proposition}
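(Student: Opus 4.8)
The plan is to mirror the proof of the descending proposition, but now pushing indices \emph{up} instead of down. The basic move is: multiply the given relation by $w_k$ and use the recursion \eqref{eq:q_recursion} to trade the bottom term $w_k q_{n-k+1}$ for higher-index classes. Solving \eqref{eq:q_recursion} for the lowest summand gives, in characteristic $2$,
\[
w_k q_{n-k+1}=q_{n+1}+\sum_{j=2}^{k-1}w_j q_{n+1-j},
\]
so multiplying $\sum_{j=0}^{k-1}\be_j^0 q_{n-j}=0$ by $w_k$ and substituting yields a homogeneous relation $\sum_{j=0}^{k-1}\be_j^1 q_{n+1-j}=0$ supported exactly on the $k$ consecutive classes $q_{n-k+2}\stb q_{n+1}$ (one new class enters at the top, the old bottom class disappears). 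Since the substitution is degree-preserving and multiplication by $w_k$ raises degree by $k$, the new relation is homogeneous of degree $D+k$, and reading off coefficients gives the one-step passage $\be_0^1=\be_{k-1}^0$, $\be_1^1=w_k\be_0^0$, and $\be_l^1=w_l\be_{k-1}^0+w_k\be_{l-1}^0$ for $2\le l\le k-1$ (using $w_0=1$, $w_1=0$).

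First I would set this up as an induction on $i$: applying the one-step move $i$ times produces a homogeneous relation $\sum_{j=0}^{k-1}\be_j^i q_{n+i-j}=0$ in degree $D+ki$ between $q_{n+i-k+1}\stb q_{n+i}$, where $\be_j^i$ is obtained from $\be_\bullet^{i-1}$ by the same shift-with-feedback rule and $\be_j^i=0$ for $i<0$. It then remains to solve this recursion in closed form. Iterating $\be_0^{i}=\be_{k-1}^{i-1}$ together with $\be_l^{i}=w_l\be_{k-1}^{i-1}+w_k\be_{l-1}^{i-1}$ expresses each $\be_j^i$ as a polynomial in $w_2\stb w_k$ applied to entries of the single sequence $(\be_{k-1}^i)$; a nested induction on $j$, again using $w_0=1$ and $w_1=0$ to collapse the boundary terms, identifies the answer with $\be_j^{i+1}=\sum_{r=0}^{j}w_{j-r}w_k^r\,\be_{k-1}^{i-r}$, and specialising $j=k-1$ recovers the self-contained recursion for $(\be_{k-1}^i)$ quoted after the statement. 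Throughout one checks that the support of the relation never leaves a window of $k$ consecutive $q_j$'s, which is automatic from the shape of the substitution.

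The main obstacle is purely combinatorial bookkeeping of the double recursion: it mixes a spread over the index $j$ with a delay over the super-index $i$, so one must be careful about the base cases $\be_j^{<0}=0$ and about exactly which characteristic-$2$ cancellations occur when \eqref{eq:q_recursion} is re-expanded. In particular, matching the naturally-arising shift-with-feedback recursion to the closed form in the statement is cleanest when the starting relation is in the normalised single-term shape $\be_{k-1}^0 q_{n-k+1}=0$ --- which is precisely the situation of the applications in this paper, where one ascends the Fukaya--Korba\v s vanishing $q_{2^t-3}=0$ from Proposition~\ref{prop:qvanishing} --- so I would organise the double induction around that case, verifying the closed form, the recursion for $(\be_{k-1}^i)$, and the degree count simultaneously; everything else reduces to the direct substitution above.
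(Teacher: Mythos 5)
Your proposal is correct and follows essentially the same route as the paper: multiply the relation by $w_k$, use \eqref{eq:q_recursion} to trade $w_kq_{n-k+1}$ for $\sum_{j=0}^{k-1}w_jq_{n+1-j}$, read off the one-step recursion $\be_j^{i+1}=w_j\be_{k-1}^i+w_k\be_{j-1}^i$, and then convert it to the closed form \eqref{eq:betaij_beta0j} by induction on $j$. Your observation that the closed form matches the substitution-derived recursion cleanly only for the normalised single-term starting relation is a genuine subtlety on which the paper's own proof also (implicitly) relies, and it is exactly the situation of the applications.
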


\begin{proof}
  We multiply the given relation with $w_k$, and use the recursion \eqref{eq:q_recursion} in the form $w_kq_{n-k+1}=\sum_{j=0}^{k-1}w_jq_{n+1-j}$ (with $w_1=0$ and $w_0=1$) to get
  \[
  w_k\sum_{j=0}^{k-1}\be_j^0 q_{n-j}=\be_{k-1}^0\left(\sum_{j=0}^{k-1}w_jq_{n+1-j}\right)+w_k\sum_{j=1}^{k-1}\be_{j-1}^0 q_{n+1-j}
  =\sum_{j=0}^{k-1}\left(\be_{k-1}^0w_j+w_k\be_{j-1}^0\right)q_{n+1-j}
  \]
  with $w_1=0$ and $\be^0_{-1}=0$. By induction, setting \begin{equation}\label{eq:betaij_recursion}
    \be^{i+1}_j=w_{j}\be^{i}_{k-1}+w_k\be^i_{j-1}
  \end{equation}
  for $0=1\stb k-1$ and $\be_{-1}^{i+1}=0$ we get a relation between $(q_{n+i-k+1}\stb q_{n+i})$:
  \[\sum_{j=0}^{k-1}\be_j^i q_{n+i-j}=0.\]
  Using \eqref{eq:betaij_recursion},	the coefficients $\be_j^i$ can also be expressed recursively from the sequence $(\be_0^i)$, starting with 
  \[
  \be_0^{i+1}=\be_{k-1}^i,\qquad \be_1^{i+1}=w_1\be_{k-1}^i+w_k\be_{k-1}^{i-1}.
  \]
  By induction on $j$, we obtain \eqref{eq:betaij_beta0j}.
\end{proof}

Applying this to Fukaya's relations $q_{2^t-3}=0$ in the case $k=3$, we directly get the following syzygy between the relations $(q_n,q_{n-1}, q_{n-2})$ defining the characteristic subring $C_3(n)$. To alleviate notation, for $k=3$ we introduce 
\begin{equation}\label{eq:r}
	r_j=\be_2^j.
\end{equation}

\begin{corollary}
  \label{cor:ascending-k3}
  For $k=3$, let $d=2^{t}-1$, $n\geq d$ and $j=n-d$. Then we have the following relation between $(q_{n},q_{n-1}, q_{n-2})$ in degree $(d-2)+3j$:
  \begin{equation}\label{eq:ascending_3}
    r_{j-1}q_{n}+w_3r_{j-2}q_{n-1}+r_jq_{n-2}=0
  \end{equation}
  where $r_i$ satisfy the recursion
\begin{equation}\label{eq:r_recursion}
  r_{i+1}=w_2r_i+w_3^2r_{i-2}.	
\end{equation}
  with $r_0=1$, $r_{<0}=0$. 
\end{corollary}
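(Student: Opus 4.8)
The plan is to derive Corollary~\ref{cor:ascending-k3} as a direct instance of the ascending-relation machinery of Proposition~\ref{prop:ascending}, fed with Fukaya's vanishing $q_{2^t-3}=0$ from Proposition~\ref{prop:qvanishing}(2). The key observation is that, for $k=3$, the relation $q_{2^t-3}=0$ should be regarded as a homogeneous relation of degree $2^t-3=d-2$ between the triple $(q_{n-2},q_{n-1},q_n)$ at level $n=d=2^t-1$, with coefficient vector $(\be_0^0,\be_1^0,\be_2^0)=(0,0,1)$. Feeding this into Proposition~\ref{prop:ascending} and iterating $i$ times produces a homogeneous relation of degree $(d-2)+3i$ between $(q_{d+i-2},q_{d+i-1},q_{d+i})$; after renaming $n=d+i$ and $j=i=n-d$, this is precisely \eqref{eq:ascending_3}, once the coefficients are identified. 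Note that Proposition~\ref{prop:qvanishing}(2) requires $n=2^t$, so the ascending process only starts at level $d=2^t-1$; this is why the corollary imposes $n\geq d$, equivalently $j\geq 0$.

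The second step is to unwind the coefficient recursion in the case $k=3$. Specializing \eqref{eq:betaij_recursion} with the conventions $w_0=1$, $w_1=0$ and $\be^i_{-1}=0$ gives the three identities $\be_0^{i+1}=\be_2^i$, $\be_1^{i+1}=w_3\be_0^i$ and $\be_2^{i+1}=w_2\be_2^i+w_3\be_1^i$. Setting $r_i:=\be_2^i$ as in \eqref{eq:r}, the first two identities yield $\be_0^j=r_{j-1}$ and $\be_1^j=w_3\be_0^{j-1}=w_3r_{j-2}$ (the cases $j=0$ being covered by the vanishing of negatively-indexed $\be$'s), and substituting these back into the third collapses it to $r_{i+1}=w_2r_i+w_3^2r_{i-2}$, which is \eqref{eq:r_recursion}; the base values $r_0=\be_2^0=1$ and $r_{<0}=\be_2^{<0}=0$ are read off from the base relation and from $\be^{<0}_\bullet=0$. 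Plugging $(\be_0^j,\be_1^j,\be_2^j)=(r_{j-1},w_3r_{j-2},r_j)$ into the iterated relation gives the stated form, and the degree claim is immediate since each ascending step multiplies by $w_k=w_3$.

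I do not expect a genuine obstacle: the entire argument is a specialization of already-established results, and the only thing demanding care is the index bookkeeping. Specifically, one must check that the iteration counter $i$ of Proposition~\ref{prop:ascending} coincides with the corollary's parameter $j$ precisely because the ascending process begins at level $n=d=2^t-1$ (where Fukaya's relation appears as the innocuous-looking relation $q_{n-2}=0$) rather than at level $n=2^t-3$, and that the conventions $w_0=1$, $w_1=0$ are applied consistently in \eqref{eq:betaij_recursion}. The substantive mathematical input has already been invested in Proposition~\ref{prop:qvanishing}(2), which itself rests on Theorem~\ref{thm:twopower_syzygies}.
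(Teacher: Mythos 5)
Your proposal is correct and follows essentially the same route as the paper: both feed the base relation $q_{2^t-3}=0$ (viewed as $(\be_0^0,\be_1^0,\be_2^0)=(0,0,1)$ at level $n=d=2^t-1$) into Proposition~\ref{prop:ascending}, identify $r_i=\be_2^i$, and read off $\be_0^j=r_{j-1}$, $\be_1^j=w_3r_{j-2}$. The only cosmetic difference is that you unwind the one-step recursion \eqref{eq:betaij_recursion} by hand, whereas the paper quotes the closed form \eqref{eq:betaij_beta0j}; the computations are identical.
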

\begin{proof}
  We first check the degrees in the statement. Assuming the recursion, we see that the degree of $r_j$ is $2j$, and then the relation has degree $2j-2+n=2n-2d-2=(d-2)+3j$.
  
  Now we prove the statement by induction. The induction start is $n=d$, $j=0$. In this case, $r_{-1}q_n+w_3r_{-2}q_{n-1}+r_0q_{n-2}=0$ holds because $r_{<0}=0$ and $q_{2^t-3}=0$ from Proposition~\ref{prop:qvanishing}. The recursion $r_{i+1}=w_2r_i+w_3^2r_{i-2}$ is exactly the last recursion from Proposition~\ref{prop:ascending}. To see that the claim follows by induction, there is some reindexing necessary so that we can write the starting relation as $\beta_0^0q_n+w_3\beta_1^0q_{n-1}+\beta_2^0q_{n-2}=0$ and use Proposition~\ref{prop:ascending}. In particular, we start with $\beta_0^0=\beta_1^0=0$ and $\beta_2^0=1$. From \eqref{eq:betaij_beta0j}, we get $\beta_0^j=w_0w_3^0r_{j-1}$ and $\beta_1^j=w_0w_3r_{j-2}$ (using $w_1=0$). In particular, we find
  \[
  \beta_0^jq_n+\beta_1^jq_{n-1}+\beta_2^jq_{n-2}=r_{j-1}q_n+w_3r_{j-2}q_{n-1}+r_jq_{n-2}.
  \]
  This finishes the proof.
\end{proof}

We remark that from the recursion \eqref{eq:ascending_3}, via a standard computation, $r_j$ can also be given in a closed form as follows:
\begin{equation}\label{eq:rj_closed}
	r_j=\sum_{2j=2b_2+6b_3}\binom{b_2+b_3}{b_2}w_2^{b_2}w_3^{2b_3}
\end{equation}

Similarly, for $k=4$, we get the following syzygy between the relations $(q_n,\dots,q_{n-3})$ defining the characteristic subring $C_4(n)$.

\begin{corollary}
  \label{cor:ascending-k4}
  For $k=4$, let $d=2^r$, $n\geq d$ and $i=n-d$. Then we have the following relation between $(q_{n},q_{n-1}, q_{n-2},q_{n-3})$ in degree $(d-3)+4i$:
  \begin{equation}\label{eq:ascending_4}
    \be_{3}^{i-1}q_{n}+w_4\be_3^{i-2}q_{n-1}+\left(w_2\be_3^{i-1}+w_4^2\beta_3^{i-3}\right)q_{n-2}+\beta_3^iq_{n-3}=0
  \end{equation}  
  where $\beta_3^i$ satisfy the recursion
  \[
  \beta_3^{i+1}=w_3\beta_3^i+w_2w_4\beta_3^{i-1}+w_4^3\beta_3^{i-3}
  \]
  with $\beta_3^0=1$ and $\beta_3^{<0}=0$.
\end{corollary}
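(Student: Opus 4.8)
The plan is to mirror the proof of Corollary~\ref{cor:ascending-k3}, now with $k=4$. The starting point is the vanishing $q_{2^r-3}=0$ from Proposition~\ref{prop:qvanishing}(3), which we regard as the trivial syzygy between the four generators $(q_{n-3},q_{n-2},q_{n-1},q_n)$ of $C_4(n)$ in the case $n=d=2^r$; we then feed it into the ascending machine of Proposition~\ref{prop:ascending} and read off the coefficients $\be_0^i,\be_1^i,\be_2^i$ in terms of the distinguished sequence $(\be_3^i)=(\be_{k-1}^i)$.

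First I would check the degrees. Granting the recursion $\be_3^{i+1}=w_3\be_3^i+w_2w_4\be_3^{i-1}+w_4^3\be_3^{i-3}$ with $\be_3^0=1$ and $\be_3^{<0}=0$, a one-line induction shows $\deg\be_3^i=3i$, since each of the three terms on the right-hand side has degree $3(i+1)$. Then $\deg(\be_3^iq_{n-3})=3i+(n-3)=(d-3)+4i$ because $n=d+i$, and the same value is obtained for the remaining three summands of \eqref{eq:ascending_4}, so the asserted relation is homogeneous of the claimed degree.

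Next comes the induction on $i$. For $i=0$ we have $n=d=2^r$, and since $\be_3^{<0}=0$ and $\be_3^0=1$, the left-hand side of \eqref{eq:ascending_4} degenerates to $q_{n-3}=q_{2^r-3}$, which vanishes by Proposition~\ref{prop:qvanishing}(3); this is the base case. For the inductive step I put the base relation into the form $\sum_{j=0}^{3}\be_j^0q_{n_0-j}=0$ with $n_0=2^r$, $\be_0^0=\be_1^0=\be_2^0=0$, $\be_3^0=1$, and apply Proposition~\ref{prop:ascending} with $k=4$; after the reindexing $n=n_0+i$ this produces the homogeneous relation $\sum_{j=0}^{3}\be_j^iq_{n-j}=0$. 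Specialising \eqref{eq:betaij_beta0j}, namely $\be_j^{i+1}=\sum_{r=0}^{j}w_{j-r}w_4^r\be_3^{i-r}$, and using $w_0=1$, $w_1=0$, gives
\[
\be_0^i=\be_3^{i-1},\qquad \be_1^i=w_4\be_3^{i-2},\qquad \be_2^i=w_2\be_3^{i-1}+w_4^2\be_3^{i-3},
\]
while the $j=3$ instance of the same formula is exactly $\be_3^{i+1}=w_3\be_3^i+w_2w_4\be_3^{i-1}+w_4^3\be_3^{i-3}$, the recursion asserted in the statement (and $\be_3^{<0}=0$ is consistent with the convention $\be_j^{<0}=0$ of Proposition~\ref{prop:ascending}). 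Substituting these three expressions into $\sum_{j=0}^{3}\be_j^iq_{n-j}=0$ turns it into precisely \eqref{eq:ascending_4}, completing the induction.

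I do not expect a real obstacle: the statement is a direct specialisation of Proposition~\ref{prop:ascending} combined with Proposition~\ref{prop:qvanishing}(3). The only places needing a little care --- exactly as in the $k=3$ case --- are the bookkeeping reindexing between the level index $i$ of the ascending construction and the parameter $n=d+i$ of the statement, and the consistent use of the boundary conventions $\be_3^{<0}=0$, $w_0=1$, $w_1=0$ when extracting $\be_0^i,\be_1^i,\be_2^i$ from the sequence $(\be_3^i)$.
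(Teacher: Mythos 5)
Your proposal is correct and follows essentially the same route as the paper: verify the degree count from the recursion, establish the base case $i=0$ via $q_{2^r-3}=0$ from Proposition~\ref{prop:qvanishing}, and then specialise the recursion \eqref{eq:betaij_beta0j} of Proposition~\ref{prop:ascending} at $k=4$ (using $w_0=1$, $w_1=0$) to express $\be_0^i,\be_1^i,\be_2^i$ in terms of the sequence $(\be_3^i)$. The coefficient identifications and the resulting recursion for $\be_3^i$ match the paper's computation exactly.
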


\begin{proof}
  We first consider the degrees of the relation in the statement. From the recursion for $\beta_3^i$, we find that the degree of $\beta_3^i$ is $3i$. The relation then has degree $3i-3+n=2n-3d-3=d-3+4i$.
  
  We start with the base case $n=d$, $i=0$. In this case, the first three terms have $\beta_3^{<0}=0$ while the last term is $q_{2n-3}$, so the relation holds.

  For general $n=d+i$, Proposition~\ref{prop:ascending} implies a relation of the form
  \[
  \beta_0^iq_n+\beta_1^iq_{n-1}+\beta_2^iq_{n-2}+\beta_3^iq_{n-3}=0. 
  \]
  Now we can rewrite the coefficients using the recursions from Proposition~\ref{prop:ascending}, omitting the terms containing $w_1=0$:
  \begin{align*}
    \beta_0^i&=w_0w_4^0\beta_3^{i-1}\\
    \beta_1^i&=w_0w_4^1\beta_3^{i-2}\\
    \beta_2^i&=w_2w_4^0\beta_3^{i-1}+w_0w_4^2\beta_3^{i-3}
  \end{align*}
  Writing out the last recursion in Proposition~\ref{prop:ascending} produces $\beta_3^{i+1}=w_3w_4^0\beta_3^i+w_2w_4^1\beta_3^{i-1}+w_0w_4^3\beta_3^{i-3}$. The claims follow from this.
\end{proof}

\section{Generalities on Koszul homology and the anomalous module}
\label{sec:koszul-homology}
In this section, we prepare for the computation of the $C$-module presentation of $K$ by interpreting $K$ as first Koszul homology for the sequence $(q_{n-k+1},\dots,q_n)$ in $W_2$. That Koszul homology should play a role for the computation is already apparent in Baum's computation of cohomology of homogeneous spaces in \cite{Baum1968}. 

\subsection{Recollections on Koszul homology}

\begin{definition}\label{def:koszul}
  For a commutative ring $R$, and an $R$-linear map $f\colon R^n\to R$, the associated \emph{Koszul complex} $\mathcal{K}(f)$ is the complex
  \[
  \xymatrix{
    \bigwedge^nR^n\ar[r]^-{\wedge^n f}&\bigwedge^{n-1}R^{n}\ar[r]^-{\wedge^{n-1} f}&\cdots\ar[r]^-{\wedge^2 f}& \bigwedge^1R^n\ar[r]^-{\wedge^1 f}&\bigwedge^0R^n\iso R
  }
  \]
  with boundary maps
  \[
  \wedge^jf(\al_1\wedge\cdots \wedge \al_j)=\sum_{i=1}^j(-1)^{i+1}f(\al_i)\cdot \al_1\wedge\cdots\wedge\hat{\al}_i\wedge\cdots \wedge \al_j.
  \]
  If $Q_1\stb Q_n\in R$ is a sequence of elements in $R$, then the associated Koszul complex $\mathcal{K}(Q)$ is the Koszul complex of the $R$-linear map $(r_1\stb r_n)\mapsto \sum_{i=1}^nQ_ir_i$.

  For an $R$-module $M$, we then define the \emph{Koszul homology} as the homology of the complex $\mathcal{K}_M(f):=\mathcal{K}(f)\otimes_R M$. Below, we will only consider the situation where $f$ is given by a sequence $Q=(Q_1,\dots,Q_n)$, and the Koszul homology will be denoted by $H_i(Q,M)$. 
\end{definition}

\subsection{Relation of Koszul homology to $K$}\label{sec:Koszul_and_K}

Let $W_1:=\F_2[w_1\stb w_k]$ and $W_2:=\F_2[w_2\stb w_k]$. Then $W_2$ is naturally a $W_1$-module (where $w_1$ acts by 0) and there is a short exact sequence of $W_1$-modules:
\begin{equation}\label{eq:SES}
	\xymatrix{
		0\ar[r]& W_1\ar[r]^{w_1}& W_1\ar[r]& W_2\ar[r]& 0.
	}
\end{equation}
Let $H:={\rm H}^*(\Gr_k(n);\F_2)$, which has the presentation $H=W_1/(Q)$ with $Q=(Q_{n-k+1},\dots,Q_n)$. Since $Q$ is a regular sequence in $W_1$, all Koszul homologies $H_i(Q, W_1)$ vanish for $i>0$. In general, for a $W_1$-module $M$, \[H_0(Q, M)=M/(Q)M\] (this is one of the defining properties of Koszul homology). Therefore
\[H_0(Q,W_2)=W_2/(Q){W_2}=C,\qquad H_0(Q,W_1)=W_1/(Q){W_1}=H\]
The short exact sequence of $W_1$-modules \eqref{eq:SES} induces a long exact sequence of $W_1$-modules in Koszul homology:
\begin{equation}\label{eq:KoszulLES}
  \xymatrix@R-1pc{
    & 0\ar[r]& 0\ar[r]& H_1(Q,W_2)\ar `r/7pt[d] `[l] `^dl[ll] `^r[r] [dll]&\\
    & H\ar[r]^{w_1}& H\ar[r]& C\ar[r]&0
  }
\end{equation}
Therefore \[K=\ker (w_1\colon H\to H)=\de(H_1(Q,W_2)),\]
and the exactness of the sequence also proves that $H_i(Q,W_2)=0$ for $i>1$.

\begin{remark}
	In fact, $H_i(Q,W_2)=\op{Tor}_i^{W_1}(H,W_2)$, see e.g.\ \cite[Corollary 4.5.5.]{weibel} and \eqref{eq:KoszulLES} is the long exact sequence of $\op{Tor}_i^{W_1}(H,\cdot)$ associated to \eqref{eq:SES}.
\end{remark}

The way to unravel the boundary map $\de$ is via the snake lemma: take a relation $r_i\in H_1(Q,W_2)$ (between $q_i$), lift it to some element $R_i$ in the Koszul complex $(\mathcal{K}_{W_1}(Q))_1$, take its boundary $d_1(R_i)\in (\mathcal{K}_{W_1}(Q))_0=W_1$, take a preimage in $W_1$ via $w_1$ and take the its reduction to $H=W_1/(Q)$. Since the sequence is exact, that element is in the kernel of $w_1$. 

\emph{This procedure gives explicit $W_1$-generators of $K$ in terms of the syzygies (relations between relations) in $C$.} Also, the presentation of $K$ as a $W_1$-module provides a presentation as a $C$-module, by base extension. Indeed, $K\otimes_{W_1}C=K$, since $\Ann_{W_1}(K)=(w_1,Q_{n-k+1}\stb Q_n)$ -- in other words, this is just the statement that $K$ is a $C$-module.
\begin{remark}\label{rmk:explicit}
	We can make this boundary map even more explicit. Let $Q_i\in W_1$ be the relations (the complete homogeneous symmetric polynomials expressed in terms of elementary symmetric ones) and let 
	\[q_i:=Q_i|_{w_1=0}\in W_2,\qquad  P_i:=Q_i+q_i\in(w_1).\] 
	Since $P_i\in (w_1)\se W_1$, it has a unique preimage $p_i$ via $w_1$, i.e.\ $w_1p_i=P_i$. That is,
	\[
	\xymatrix@R-2pc{
		W_1\ar[r]^{w_1}& W_1\ar[r]& W_2\\
		p_i\ar[r]& P_i& \\
		&Q_i\ar[r]& q_i& 
	}
	\]
	If $\sum f_iq_i=0\in W_2$ is a relation, then since $Q_i=0$ in $H$, we have the following equality in $H$:
	\[\sum f_iP_i=\sum f_i q_i=0,\] since $\sum f_iq_i=0\in W_2\se W_1$, and using that $H$ is a quotient of $W_1$. 
	The boundary map in the long exact sequence~\eqref{eq:KoszulLES} maps such a relation $\sum f_iq_i$ in the Koszul complex to
	\begin{equation}\label{eq:LESboundary}
		\de\left(\sum f_iq_i\right) =\sum f_ip_i\in H.
	\end{equation}
	Note that since $w_1p_i=P_i$, $\sum f_i p_i$ is an element in $\ker w_1\se H$. Also note that $\de$ decreases the degrees by one; $\deg \de(\xi)=\deg \xi-1$ for $\xi\in \ker(d_1)$.
	
	The classes $P_i$ and $p_i$ also satisfy recursive identities: since $Q_i=\sum_{r=1}^k w_rQ_{i-r}$ and $q_i=\sum_{r=2}^k w_rq_{i-r}$,
	\[P_i=Q_i+q_i=w_1Q_{i-1}+\sum_{r=2}^k w_rP_{i-r}\]
	and since $Q_{i-1}=P_{i-1}+q_{i-1}$:
	\begin{equation}\label{eq:Precursion}
		P_i=w_1q_{i-1}+\sum_{r=1}^k w_rP_{i-r}	
	\end{equation}
	
		So 
	\begin{equation}\label{eq:precursion}
		p_i=q_{i-1}+\sum_{r=1}^kw_rp_{i-r}
	\end{equation}
\end{remark}
\begin{remark}
	The Koszul homology of $(q_{n-2},q_{n-1},q_n)\se W_2=\mathbb{F}_2[w_2,w_3]$ is the same as the Koszul homology of $(w_1,Q_{n-2},Q_{n-1},Q_n)\subseteq W_1=\mathbb{F}_2[w_1,w_2,w_3]$.
\end{remark}
\begin{remark}
	The Koszul homology description generalizes to the cohomology of the sphere bundle $S$ of a rank $n$ vector bundle $E\to X$, whenever ${\rm H}^*(X)$ is a complete intersection ring $\F_2[x_1\stb x_r]/(Q_1\stb Q_p)$ for some regular sequence $Q_i$. Then the first homology of the Koszul complex of $(w_n(E),Q_1\stb Q_p)$ over $\F_2[x_1\stb x_r]$ is  $\ker w_n(E)\se {\rm H}^*(X)$.
\end{remark}

\begin{remark}
  \label{rem:graded}
  In what follows, we will in fact consider $W_1$ resp. $W_2$ as graded rings, with the grading given by $\deg w_i=i$. The characteristic subring $C$ and the anomalous module both have gradings coming from cohomological degree compatible with this. The Koszul complex is then also a complex of graded modules, and the identification of $K$ as first Koszul homology is compatible with this grading. Some of the computations in Section~\ref{sec:K-presentation} will make use of this grading.
\end{remark}

\subsection{The characteristic rank of $\OGr_k(2^t)$}
The \emph{characteristic rank} of a vector bundle $E\to X$ is the maximal degree $r$, such that ${\rm H}^{\leq r}(X;\F_2)$ is generated by the Stiefel--Whitney classes of $E$. Then for the tautological bundle $S\to \OGr_k(n)$,
$\crk(S)$ is the maximal degree $r$ such that the inclusion $C^{\leq r}\subset {\rm H}^{\leq r}(\OGr_k(n);\mathbb{F}_2)$ is an isomorphism. So a nonzero class in the Koszul homology $H_1(Q,W_2)$ in degree $d$ gives the upper bound $\crk(S)\leq d-2$.  The characteristic ranks $\crk(S)$ for $k=3$ and $k=4$ have been determined in \cite[Theorem 1]{PetrovicPrvulovicRadovanovic} and \cite[Theorem 6.6]{PrvulovicRadovanovic2019}. The characteristic rank $\crk(S)$ is not known besides these cases, though upper and lower bounds have been developed in the literature,see the discussion below. Using Theorem \ref{thm:twopower_syzygies} we can give a new upper bound for $\crk(S)$ when $n=2^t$.

\begin{theorem}\label{thm:charrank}
  Assume $2^t-5\geq k\geq 5$. The characteristic rank of the tautological bundle $S\to \OGr_k(2^t)$ satisfies
  \[\crk(S)\leq 2^t-2.\] 
  In particular, there is a nonzero anomalous class in the cohomology  ${\rm H}^{2^t-1}(\OGr_k(2^t);\F_2)$ of the oriented Grassmannian.
\end{theorem}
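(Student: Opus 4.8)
The plan is to produce a nonzero class in the first Koszul homology $H_1(Q,W_2)$ in degree $2^t$. By the long exact sequence \eqref{eq:KoszulLES}, the boundary map $\delta$ identifies $H_1(Q,W_2)$ with $K=\ker w_1$ while lowering degrees by one, so such a class yields a nonzero anomalous class in ${\rm H}^{2^t-1}(\OGr_k(2^t);\F_2)$, hence the bound $\crk(S)\leq 2^t-2$ as recalled above. The only real input is the fundamental syzygy of Theorem~\ref{thm:twopower_syzygies}.

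First I would build the cycle. Write $n=2^t$ and split the recursion $\sum_{l=0}^k w_lq_{n-l}=0$ (valid in $W_2$, with $w_0=1$ and $w_1=0$) into its even and odd halves; by Theorem~\ref{thm:twopower_syzygies} each half vanishes identically precisely because $n$ is a power of $2$. I then pick the half all of whose terms $q_{n-l}$ have index $\geq n-k+1$: the even half $q_n+w_2q_{n-2}+\cdots+w_{k-1}q_{n-k+1}$ when $k$ is odd, and the odd half $w_3q_{n-3}+w_5q_{n-5}+\cdots+w_{k-1}q_{n-k+1}$ when $k$ is even. In either case this is a homogeneous degree-$2^t$ syzygy among the defining relations $q_{n-k+1},\dots,q_n$ of the ideal $I\subset W_2$, hence a cycle $Z\in(\mathcal{K}_1)_{2^t}$ in the Koszul complex of $(q_{n-k+1},\dots,q_n)$. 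Writing $e_1,\dots,e_k$ for the standard basis of $\mathcal{K}_1=W_2^{\oplus k}$ with $d_1(e_j)=q_{n-k+j}$, and setting $j_0=k$ (so that $q_{n-k+j_0}=q_n$) if $k$ is odd and $j_0=k-3$ (so that $q_{n-k+j_0}=q_{n-3}$) if $k$ is even, the $e_{j_0}$-entry of $Z$ equals $1$ resp.\ $w_3$; in particular $Z\neq 0$.

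Next I would show $[Z]\neq 0$, i.e.\ $Z\notin\operatorname{im}(d_2)$. Apply the $W_2$-linear projection $\mathcal{K}_1\to W_2$ onto the $e_{j_0}$-coordinate: on $Z$ it returns $1$ resp.\ $w_3$, which is nonzero, while on a boundary $d_2(W)$ with $W=\sum_{l<m}f_{lm}\,e_l\wedge e_m$ it returns a $W_2$-combination of the $q_{n-k+l}$ ($l<j_0$) and the $q_{n-k+m}$ ($m>j_0$) with coefficients $f_{l,j_0}$ and $f_{j_0,m}$. If $W$ is homogeneous of internal degree $2^t$ then $\deg f_{l,j_0}=2^t-(n-k+l)-(n-k+j_0)=2k-l-j_0-2^t$, and similarly for $f_{j_0,m}$; substituting $j_0=k$ gives $\deg f_{l,k}=k-l-2^t<0$ (as $l\geq 1$, $k<2^t$), and substituting $j_0=k-3$ gives $\deg f_{l,k-3}=k+3-l-2^t<0$ and $\deg f_{k-3,m}=k+3-m-2^t<0$ (using $l\geq 1$, $m\geq k-2$ and $5\leq k\leq 2^t-5$). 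Hence all these coefficients vanish, the $e_{j_0}$-coordinate is identically zero on $\operatorname{im}(d_2)_{2^t}$, and $Z$ is therefore not a boundary. Thus $[Z]\neq 0$ in $H_1(Q,W_2)_{2^t}$, which gives both assertions of the theorem.

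Everything substantive is already in place: the vanishing of the even/odd halves for $n=2^t$ comes from Theorem~\ref{thm:twopower_syzygies}, and the identification of $K$ with first Koszul homology from Section~\ref{sec:koszul-homology}. The point that needs care — and is really the only obstacle — is choosing the correct half of the recursion and checking that it involves no $q_j$ with $j<n-k+1$, so that it is a bona fide syzygy among the defining relations; after that, the non-boundary step is just the degree bookkeeping above, split according to the parity of $k$. (When $k\leq 2^{t-1}+1$ one in fact has $(\mathcal{K}_2)_{2^t}=0$, so the non-boundary step is immediate; the projection argument is what covers the full range of $k$.)
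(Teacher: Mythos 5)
Your argument is correct and follows the same route as the paper: the fundamental syzygy of Theorem~\ref{thm:twopower_syzygies} supplies a homogeneous degree-$2^t$ cycle in $\mathcal{K}_1$, a degree count shows it cannot be a boundary, and the long exact sequence \eqref{eq:KoszulLES} (with its degree shift by one) converts the resulting nonzero class in $H_1(Q,W_2)$ into an anomalous class in ${\rm H}^{2^t-1}(\OGr_k(2^t);\F_2)$. Two points of execution differ and are worth recording. First, the paper uses the odd half $\sum_{i>1\text{ odd}}w_iq_{n-i}$ uniformly, whereas you switch to the even half when $k$ is odd; this is the right move, since for odd $k$ the odd half contains the term $w_kq_{n-k}$, and $q_{n-k}$ lies outside the generating set $(q_{n-k+1},\dots,q_n)$, so its truncation to the generating range is no longer a syzygy --- your parity-dependent choice of $j_0$ repairs this cleanly. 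Second, the paper first reduces to $k\leq 2^{t-1}$ via the duality $\OGr_k(2^t)\cong\OGr_{2^t-k}(2^t)$, after which the boundary step is essentially immediate (every entry of $d_2(W)$ lies in the ideal, whose minimal degree $n-k+1$ exceeds the degrees of the entries of the cycle); your projection onto the $e_{j_0}$-coordinate handles the whole range $5\leq k\leq 2^t-5$ uniformly without invoking duality, at the cost of a slightly longer degree computation. Both versions prove the theorem; yours is more self-contained on these two points.
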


\begin{proof}
We can assume $k\leq 2^{t-1}$ by the duality $\OGr_k(2^t)\iso \OGr_{2^t-k}(2^t)$. The relation \[\sum_{i=1}^{\lfloor{k/2}\rfloor-1} q_{n-2i-1}w_{2k+1}=0\] from Theorem \ref{thm:twopower_syzygies} gives a generator of $\ker(d_1)$ in the Koszul complex: 
\[
v=(0,0,0,w_3,0,w_5,0,\ldots)\in \wedge^1W_2^{\oplus k}.
\]
We claim that this is not in the image of the differential $d_2$ which has the following form:
\[
\wedge^2(W_2^{\oplus k})\ni A=\left(\begin{array}{ccccc}
	0 & \la_{12} & \la_{13} & \ldots & \la_{1k} \\
	\la_{12} & 0 & \la_{23} & \ldots & \la_{2k} \\
	\vdots & \vdots & \ddots & \ldots & \vdots \\
	\la_{1k} & \la_{2k} & \la_{3k} & \ldots & 0
\end{array}\right)\mapsto A
\left(
\begin{array}{c}
	q_{n-k+1} \\
	q_{n-k+2} \\
	\vdots\\
	q_n
\end{array}
\right)\in\wedge^1(W_2^{\oplus k})
\]
Since all entries of $Aq$ are homogeneous and the lowest degree nonzero term in $q$ is of degree $n-k+1\geq 2^t-2^{t-1}+1\geq 3$, this implies that $v\in \ker(d_1)$ is not in $\im(d_2)$, and defines a nontrivial element in the Koszul homology $H_1(Q,W_2)$. Via the long exact sequence \eqref{eq:KoszulLES}, this defines a nonzero element of $K$, which in turn lifts to an anomalous class in ${\rm H}^{2^t-1}(\OGr_k(2^t);\F_2)$. Therefore the characteristic rank is at most $2^t-2$, by definition.
\end{proof}

This anomalous class in $\OGr_k(2^t)$ provides further anomalous classes for $\OGr_k(n)$ by ascending and descending the relation of Theorem \ref{thm:twopower_syzygies}. Sometimes these ascended and descended relations vanish in Koszul homology, as they fall into the image of the Koszul boundary, as we illustrate in Section \ref{sec:koszulboundary}. However, these classes always seem to be responsible for the characteristic rank of $S\to \OGr_k(n)$, cf.\ Section \ref{sec:beyondk3}. We formulate our observations in the following conjecture:
\begin{conjecture}
  \label{new-amazing-conjecture}
  The characteristic rank of the tautological bundle $S\to\OGr_k(n)$ is equal to
  \begin{equation}\label{eq:charrank}
  	\crk(S)=\min (2^t-2,k(n-2^{t-1})+2^{t-1}-2).
  \end{equation}
  for $5\leq k\leq 2^{t-1}<n\leq 2^t$ and $t\geq 5$.
\end{conjecture}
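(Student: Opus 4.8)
My first move would be to restate the conjecture as a sharp computation of the top nonzero degree $s$ of the Artinian ring $C=W_2/(q_{n-k+1}\stb q_n)$. By the Poincar\'e pairing on $\Gr_k(n)$ (Proposition~\ref{prop:pd}) one has $\dim_{\F_2}K_e=\dim_{\F_2}C_{N-e}$ with $N=k(n-k)=\dim\Gr_k(n)$, so the lowest degree in which $K$ is nonzero equals $N-s$; via the degree-shifting identification of Section~\ref{sec:Koszul_and_K} ($H_1(Q,W_2)_d\cong K_{d-1}$), the first Koszul homology is thus nonzero exactly in degree $N-s+1$. Since a nonzero class in $H_1(Q,W_2)$ in degree $d$ forces $\crk(S)\le d-2$, and $\crk(S)$ is one less than the lowest anomalous degree $=N-s$, this yields the identity $\crk(S)=N-s-1$. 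Hence Conjecture~\ref{new-amazing-conjecture} is equivalent to
\[
s\;=\;\max\bigl(k(n-k)-2^t+1,\ (k-1)(2^{t-1}-k-1)\bigr)
\]
in the range $5\le k\le 2^{t-1}<n\le 2^t$, $t\ge 5$.

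\textbf{The bound $s\ge\dots$ (upper bound on $\crk$).} Here one exhibits an element of $C$ of the stated degree, i.e.\ a nonzero class in $H_1(Q,W_2)$ in degree $N-s$. The two terms of the maximum match the two families of syzygies from Section~\ref{sec:syzygies}: \emph{descending} the fundamental syzygy of Theorem~\ref{thm:twopower_syzygies} from $n'=2^t$ down to $n$ (keeping degree $2^t$, cf.~\eqref{eq:descending_relation}) and \emph{ascending} it from $n'=2^{t-1}$ up to $n$ (raising the degree to $2^{t-1}+k(n-2^{t-1})$, by Proposition~\ref{prop:ascending} and \eqref{eq:ascending_relation}). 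For $n=2^t$ this is exactly Theorem~\ref{thm:charrank}. For general $n$ one still has to rule out that the relevant one of the two syzygies is a boundary of $d_2$ (the text notes that ascended/descended syzygies do sometimes become boundaries); the plan is to do this by pinning down a monomial in the syzygy that cannot occur in any entry of $\im(d_2)$, using the degree argument of Theorem~\ref{thm:charrank} as a template but now with finer leading-term information on the $q_m$ than the bound $\deg q_m\ge n-k+1$, and splitting according to which term of the minimum is active.

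\textbf{The bound $s\le\dots$ (lower bound on $\crk$).} This is the substantial direction. One must show $C_d=0$ for all $d$ exceeding $s_0:=\max(\dots)$, i.e.\ that every monomial in $w_2\stb w_k$ of degree $>s_0$ lies in $I=(q_{n-k+1}\stb q_n)$; equivalently, that no nonzero syzygy among $(q_{n-k+1}\stb q_n)$, modulo the Koszul boundary, exists below degree $N-s_0+1$, so that the two families of Section~\ref{sec:syzygies} already realise the \emph{first} nonvanishing of $H_1(Q,W_2)$. The natural route is to produce an explicit Gr\"obner basis (or at least the initial ideal) of $I$ in $\F_2[w_2\stb w_k]$, uniformly in $k$ and in $n$ over the given range, generalizing the case-by-case computations for $k=3$ in \cite{PetrovicPrvulovicRadovanovic} and $k=4$ in \cite{PrvulovicRadovanovic2019}; the standard monomials then give the Hilbert function of $C$ and in particular $s$. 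Alternatives worth trying are: controlling $\dim_{\F_2}C_d$ in the top range directly through the Young-diagram combinatorics of multiplication by $w_1$ on ${\rm H}^*(\Gr_k(n);\F_2)$ underlying Ozawa's Betti-number count \cite{Ozawa2021}; or an induction on $k$ via the specialization $w_k\mapsto 0$ together with an induction on $n$ via the ascending/descending formalism, tracking the top degree of $C$ through these operations.

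\textbf{Expected main obstacle.} Both halves ultimately require sufficiently detailed information about the ideal $I$ --- leading terms of the $q_m$ for the bound $s\ge s_0$, a full initial-ideal/standard-monomial description for $s\le s_0$ --- and it is precisely such a description, valid uniformly for all $k\ge 5$ and all $n$ with $2^{t-1}<n\le 2^t$, that is missing: even for $k=4$ the literature had to argue case by case, and beyond the mod~2 multinomial identities of Section~\ref{sec:syzygies} the combinatorics of the Giambelli polynomials $q_m$ in $\F_2[w_2\stb w_k]$ is not understood. I therefore expect the real difficulty to be the input of new combinatorial structure for the $q_m$, without which neither the non-boundary check in the upper-bound part nor the high-degree vanishing of $C$ in the lower-bound part can be pushed through in a $k$-uniform way.
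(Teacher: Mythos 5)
The statement you are trying to prove is presented in the paper as a \emph{conjecture}: the authors give no proof, only computational verification for $k=5$, $n\le 32$ and $k=6$, $n\le 23$, plus the single case $n=2^t$ of the upper bound (Theorem~\ref{thm:charrank}). Your proposal does not close that gap, and you say as much yourself. What you do establish is correct and worth keeping: by Proposition~\ref{prop:pd} the lowest degree in which $K$ is nonzero is $N-s$ with $N=k(n-k)$ and $s$ the top nonzero degree of the Artinian ring $C$, hence $\crk(S)=N-s-1$, and your algebra converting the conjectured formula into $s=\max\bigl(k(n-k)-2^t+1,\ (k-1)(2^{t-1}-k-1)\bigr)$ checks out. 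Your matching of the two terms of the minimum with the descended (degree $2^t$) and ascended (degree $2^{t-1}+k(n-2^{t-1})$) images of the fundamental syzygy of Theorem~\ref{thm:twopower_syzygies} is exactly the heuristic the authors use to formulate the conjecture.

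Both substantive implications, however, remain unproven plans. For the upper bound on $\crk(S)$ you must show that for \emph{every} $n$ in the range the lower-degree one of the two candidate syzygies is not in $\im(d_2)$; the degree argument of Theorem~\ref{thm:charrank} works only for $n=2^t$, where the cycle $v$ has entries of degree at most $k<n-k+1$ while every nonzero entry of $\im(d_2)$ has degree at least $n-k+1$. For ascended or descended syzygies at general $n$ the entries are of large degree and this argument gives nothing; moreover the paper exhibits cases (Section~\ref{sec:koszulboundary}, and the tables in Section~\ref{sec:beyondk3}) where such syzygies genuinely do become Koszul boundaries, so ruling this out requires new input. For the lower bound you must show $C_d=0$ for all $d$ above the predicted top degree, equivalently that no anomalous class occurs in lower degree; here the $k\ge 4$ tables in Section~\ref{sec:beyondk3} show that $K$ has many generators \emph{not} arising from ascended or descended relations, so one cannot argue by classifying syzygies within the two families, and no Gr\"obner-basis or initial-ideal description of $(q_{n-k+1},\dots,q_n)$ uniform in $k\ge 5$ is available. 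These are not technical loose ends but the entire mathematical content of the conjecture, which your proposal reformulates and motivates but does not prove.
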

We verified this conjecture for $k=5$, $n\leq 32$ and $k=6$, $n\leq 23$, cf.~Section~\ref{sec:beyondk3}. 
\begin{remark}
	To explain the $t\geq 5$ assumption, note that for $k\leq 4$, the syzygies follow a different pattern which gives rise to different characteristic ranks (cf.\ Proposition \ref{prop:qvanishing} and Corollaries \ref{cor:descending-k3} to \ref{cor:ascending-k4}). This also influences the characteristic ranks via the duality $\OGr_k(n)\iso \OGr_{n-k}(n)$ in the range $8<n\leq 16$ for $k=5,6$. The explicit examples not satisfying \eqref{eq:charrank} are $\crk(S\to\OGr_5(10))=10$, $\crk(S\to\OGr_5(11))=13$ and $\crk(S\to\OGr_6(12))=13$, see Section \ref{sec:beyondk3}. 
\end{remark}
\begin{remark}
  We briefly summarize what else is known about the characteristic rank of $S\to \OGr_k(n)$. There is a simple lower bound $\crk(S)\geq n-k-1$; this amounts to the obvious statement that there cannot be a $W_2$-relation between $(q_{n-k+1}\stb q_n)$ in degree $n-k$. The best known lower bounds have been obtained in \cite{PrvulovicRadovanovic2019} using Gr\"obner basis techniques; for $k\geq 6$:
  \[\crk(S)\geq (n-k)+\lfloor k/3\rfloor-1,\] 
  as well as some stronger estimates for certain pairs $(k,n)$. The authors also note \cite[Remark 6.8]{PrvulovicRadovanovic2019} that ``there are reasons to believe that for $k \geq 5$, there is some $n$ such that $\crk(S) > 2(n-k) - 1$''\footnote{The notation is adapted to comply with the notation of the present paper.}
  From the tables in Section \ref{sec:beyondk3}, we see that 
  \[\crk(S\to \OGr_6(18))=2(n-k)+4,\qquad  \crk(S\to \OGr_6(19))=2(n-k)+6.\]
  More generally, Conjecture~\ref{new-amazing-conjecture} would imply that for any $r$, there is a choice of $k$ and $n$, such that $\crk(S\to\OGr_k(n))>2(n-k)+r$.
\end{remark}

\begin{remark}
	In the literature, often the characteristic rank of a manifold is considered, which is the characteristic rank of its tangent bundle $\crk(TM)$. Whenever $w(TM)$ can be expressed in terms of $w(E)$, there is an obvious upper bound $\crk(TM)\leq \crk(E)$. In general, these are not equal; for example $\crk(T\RP^{2r+1})=0$, but $\crk(S\to\RP^{2r+1})=2r+1$. On the other hand, $\crk(T\RP^{2r})=\crk(S\to \RP^{2r})=2r$. By a result of Korba\v s \cite[p.\ 72]{Korbas2010}, $\crk(T\OGr_{k}(2n+1))=\crk(S\to\OGr_{k}(2n+1))$ - this is due to the fact that $w(S)$ can also be expressed from $w(T\OGr_{k}(2n))$.
\end{remark}

\section{Presentation of the anomalous module for \texorpdfstring{$k=3$}{k=3}}
\label{sec:K-presentation}

For $k=3$, we exhibited certain $W_2$-relations between $(q_{n-2},q_{n-1}, q_n)$ in Corollaries~\ref{cor:descending-k3} and \ref{cor:ascending-k3}. These involved the polynomials $q_i$ themselves and the polynomials $r_l$ of degree $2l$ defined by the recursion $r_{l+1}=w_2r_l+w_3^2r_{l-2}$. The goal of this section is to prove the following Theorem:
\begin{theorem}\label{thm:anomalousmodule}
  Let $2^{t-1}<n\leq 2^t-4$, and set $i=2^t-3-n$ and $j=n-2^{t-1}+1$. Then the anomalous module $K\se {\rm H}^*(\Gr_3(n);\F_2)$ is isomorphic (as a graded $C$-module) to
  \begin{equation}
    K=C\bra A_n,D_n\ket/(q_iA_n+r_{j-1}D_n,
    q_{i+1}A_n+w_3r_{j-2}D_n, 
    w_3q_{i-1}A_n+r_jD_n),
  \end{equation}
  where $A_n$ and $D_n$ are explicit elements \eqref{eq:ascended_generator}, \eqref{eq:descended_generator} of $K$ of degrees $\deg A_n=3n-2^t-1$ and $\deg D_n=2^t-4$. 
\end{theorem}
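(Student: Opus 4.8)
# Proof Proposal for Theorem~\ref{thm:anomalousmodule}

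The plan is to compute the first Koszul homology $H_1(Q,W_2)$ of the sequence $(q_{n-2},q_{n-1},q_n)$ in $W_2 = \F_2[w_2,w_3]$ directly, and then transport the resulting presentation to a $C$-module presentation of $K$ via the identification $K \cong \delta(H_1(Q,W_2))$ from Section~\ref{sec:Koszul_and_K}. Concretely, the Koszul complex in low degrees looks like $W_2 \xrightarrow{d_2} W_2^{\oplus 3} \xrightarrow{d_1} W_2$, where $d_1$ is dotting with $(q_{n-2},q_{n-1},q_n)$ and $d_2$ is the $3\times 3$ antisymmetric-matrix multiplication written out in the proof of Theorem~\ref{thm:charrank}. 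Since $\F_2[w_2,w_3]$ is a polynomial ring in two variables and $(q_{n-2},q_{n-1},q_n)$ cuts out the irrelevant ideal (Corollary~\ref{cor:radical}), the module $\ker d_1$ of syzygies is a second syzygy module and, being over a regular ring of dimension 2, should be free; the first step (step (I) in the outline, i.e.\ Proposition~\ref{prop:k3-basis}) is to show $\ker d_1 = W_2\langle u, v\rangle$ is free of rank 2, generated by the \emph{ascending} syzygy of Corollary~\ref{cor:ascending-k3} and the \emph{descending} syzygy of Corollary~\ref{cor:descending-k3}, in degrees $3n-2^t$ and $2^t-3$ respectively.

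Granting freeness of $\ker d_1$ on $u,v$, the quotient $H_1(Q,W_2) = \ker d_1 / \im d_2$ is then presented as $W_2\langle u, v\rangle$ modulo the image of $d_2$. The key computation (step (II), Proposition~\ref{prop:K-relations}) is to express the three columns of the antisymmetric matrix $d_2$ — i.e.\ the ``Koszul relations'' $q_{n-2}e_j \wedge e_k$ type elements — in terms of the basis $u,v$, which yields exactly the three relations
\[
q_iA_n+r_{j-1}D_n = 0,\quad q_{i+1}A_n+w_3r_{j-2}D_n = 0,\quad w_3q_{i-1}A_n+r_jD_n = 0
\]
once we set $A_n = \delta(u)$, $D_n = \delta(v)$; here the indices $i = 2^t-3-n$ and $j = n-2^{t-1}+1$ and the polynomials $q_\bullet, r_\bullet$ enter precisely through the formulas in Corollaries~\ref{cor:descending-k3} and~\ref{cor:ascending-k3} (and their degree bookkeeping). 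One checks the degrees: $\deg u = 3n-2^t$ so $\deg A_n = \deg u - 1 = 3n-2^t-1$, and $\deg v = 2^t-3$ so $\deg D_n = 2^t-4$, matching Remark~\ref{rmk:explicit} that $\delta$ drops degree by one. Since $H_1(Q,W_2) = K$ is already a $C$-module and the presentation is obtained by base change along $W_2 \to C$ (as noted after Remark~\ref{rmk:explicit}), the $W_2$-presentation descends verbatim to the claimed $C$-module presentation.

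The main obstacle I expect is step (I): proving that the syzygy module $\ker d_1$ is \emph{free} and generated \emph{exactly} by the ascended and descended relations $u$ and $v$. Freeness should follow from Hilbert--Burch or from the fact that $(q_{n-2},q_{n-1},q_n)$ is generated by three elements in a 2-dimensional polynomial ring with zero-dimensional (hence in fact Artinian, Gorenstein even — the complete intersection $W_2/(q_{n-2},q_n)$) quotient, so the minimal free resolution has length 2 and the first syzygy module is free of rank $3-1 = 2$; but pinning down that $u$ and $v$ form a \emph{basis} (not merely generate, and not merely are independent) requires a degree/rank count — one compares the two exact sequences, uses that the alternating sum of Hilbert series of $0 \to W_2 \xrightarrow{d_2'} W_2^{\oplus 2} \xrightarrow{(u\ v)} \ker d_1 \to 0$ must reproduce the Hilbert series of $\ker d_1$ computed from $0 \to \ker d_1 \to W_2^{\oplus 3} \to W_2 \to C \to 0$ together with the known Betti numbers of $C$ (equivalently of $K$, via Poincaré duality, Proposition~\ref{prop:pd}). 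A clean alternative is to verify directly that the $2\times 2$ minors of the matrix $[u \mid v]$ (whose columns are the coefficient vectors of the two syzygies in $W_2^{\oplus 3}$) generate exactly the ideal $(q_{n-2},q_{n-1},q_n)$ up to unit, invoking the Hilbert--Burch theorem to conclude that $[u\mid v]$ is the full syzygy matrix; this reduces the whole of step (I) to an explicit identity among the $q_\bullet$ and $r_\bullet$ polynomials, which can be proved by the recursions $q_{i}=w_2q_{i-2}+w_3q_{i-3}$ and $r_{j+1}=w_2r_j+w_3^2r_{j-2}$ and induction on $t$ (the base case being Fukaya's vanishing $q_{2^t-3}=0$ from Proposition~\ref{prop:qvanishing}).
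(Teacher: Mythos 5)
Your proposal is correct and follows the same architecture as the paper's proof: identify $K$ with $H_1(Q,W_2)=\ker d_1/\operatorname{im}d_2$, show that $\ker d_1$ is a free graded $W_2$-module of rank $2$ on the ascended and descended syzygies $u,v$, and then read off the three relations by expressing the columns of the $d_2$-matrix in the basis $u,v$. The computational heart is also the same: the identity that the three $2\times 2$ minors of the matrix $[u\mid v]$ are exactly $q_{n-2},q_{n-1},q_n$ (the paper's Lemma~\ref{lem:linear-equations}); note that the paper proves this by induction on $n$ within a fixed power-of-two window, starting from Fukaya's vanishing $q_{2^{t-1}-3}=0$, rather than by induction on $t$, and that the same three identities are used a second time to verify the factorization of $d_2$ through $[u\mid v]$, which is what produces the relation matrix $\bigl(\begin{smallmatrix}q_i&q_{i+1}&w_3q_{i-1}\\ r_{j-1}&w_3r_{j-2}&r_j\end{smallmatrix}\bigr)$. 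Where you genuinely diverge is in how you conclude that $u,v$ form a \emph{basis} of $\ker d_1$: you invoke Hilbert--Burch (equivalently the Buchsbaum--Eisenbud acyclicity criterion), using that the ideal of maximal minors of $[u\mid v]$ equals $(q_{n-2},q_{n-1},q_n)$, which has grade $2$ by Corollary~\ref{cor:radical}, so that $0\to W_2^{2}\xrightarrow{[u\mid v]}W_2^{3}\xrightarrow{d_1}W_2$ is exact. The paper instead argues by hand: a dependence $\lambda u+\mu v=0$ forces $\lambda\mu$ to annihilate the minors, hence to lie in every prime containing $(q_{n-2},q_{n-1},q_n)$, hence to be nilpotent, hence zero; and spanning follows by reducing a putative dependence $\alpha s+\lambda u+\mu v=0$ modulo a maximal ideal containing $\alpha$. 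Your route is shorter and explains conceptually why the resolution has this shape; the paper's is self-contained and yields as a byproduct the observation (Remark~\ref{rmk:nonvanishing_r_coefficients}) that all coefficients $r_{\geq 0}$ occurring are nonzero. Two inaccuracies in your asides, neither load-bearing: $(q_{n-2},q_n)$ is generally \emph{not} a regular sequence (both are divisible by $w_2$ whenever $n\equiv 1\bmod 3$), and $C$ is not Gorenstein in the range at hand (the paper notes $C$ fails Poincar\'e duality precisely when $K$ is not free); all your argument actually needs is that $C$ is Artinian, so the ideal has codimension $2$.
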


We will derive this presentation of $K$ using its identification as the first Koszul homology of the ideal $(q_{n-2},q_{n-1},q_n)\se W_2=\F_2[w_2,w_3]$ described in Section \ref{sec:Koszul_and_K}. Throughout this section, fix the indices $i,j,t,n$ as stated in the theorem, cf.\ Figure~\ref{fig:notations}.

\begin{figure}
	\begin{tikzpicture}
		\draw (0,0) -- (6,0);
		\fill[black] (0,0) circle (2pt);
		\fill[black] (6,0) circle (2pt);
		\fill[black] (4,0) circle (2pt);
		\draw (0,0) node[anchor=north] {$p=2^{t-1}-1$};
		\draw (4,0) node[anchor=north,inner sep=8pt] {$n$};
		\draw (6,0) node[anchor=north] {$m=2^t-3$};
		\draw (2,1) node[anchor=north] {$j$};
		\draw (5,1) node[anchor=north] {$i$};
		\draw[decoration={brace,raise=5pt},decorate]
		(0,0) -- node[above=6pt] {} (4,0);
		\draw[decoration={brace,raise=5pt},decorate]
		(4,0) -- node[above=6pt] {} (6,0);
	\end{tikzpicture}
	\caption{Notations}\label{fig:notations}
\end{figure}
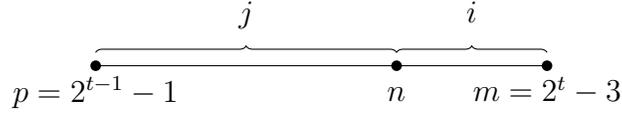

\subsection{Relations between $q_l$ and $r_l$}
The following equalities connect the coefficients of the ascended and descended relations, and will play a crucial role for the proofs in this section: 

\begin{lemma}
  \label{lem:linear-equations}
  With the indexing set up above, the following equations hold for all $i,j,n$:
    \begin{align*}
    w_3r_{j-2}q_i+r_{j-1}q_{i+1}&=q_{n-2}\\
    r_jq_i+w_3r_{j-1}q_{i-1} &=q_{n-1}\\
    r_jq_{i+1}+w_3^2r_{j-2}q_{i-1}&=q_n
  \end{align*}
\end{lemma}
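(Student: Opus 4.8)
The three identities in Lemma~\ref{lem:linear-equations} are purely formal statements about the polynomial sequences $(q_l)$ and $(r_l)$ in $W_2=\F_2[w_2,w_3]$, defined by their respective recursions $q_l=w_2q_{l-2}+w_3q_{l-3}$ and $r_{l+1}=w_2r_l+w_3^2r_{l-2}$. Since the indices satisfy $i+j=n-2^{t-1}+1+2^t-3-n=2^{t-1}-2$, i.e. $i+j$ is a fixed constant $2^{t-1}-2$ (equivalently $n=i+j+2^{t-1}-1+\,$adjusting for the shift; the key point is that $n-i-j$ is a constant not depending on where we are in the range), I would first reformulate: set $m=i+j$ and prove the three identities for \emph{all} pairs $(i,j)$ with $i+j=m$ and $n:=i+j+(2^{t-1}-1)$—but actually the cleanest thing is to observe that all three identities are homogeneous of the correct degree ($\deg q_l = l$, $\deg r_l = 2l$, so $\deg(w_3 r_{j-2}q_i)=3+2(j-2)+i=i+2j-1$, and $\deg q_{n-2}=n-2$; these match precisely because $i+2j-1 = i+j+j-1$ and $n-2 = i+j+2^{t-1}-3$, so the identity $i+2j-1=n-2$ is the defining relation $j = 2^{t-1}-1$... wait, that is not constant). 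Let me restart the bookkeeping: with $i=2^t-3-n$ and $j=n-2^{t-1}+1$ we get $i+j = 2^t-3-n+n-2^{t-1}+1 = 2^{t-1}-2$, a constant. And $n = 2^{t-1}-1+j = 2^t-3-i$. So in the first identity, $\deg(\text{LHS}) = i+2j-1 = (i+j)+j-1 = 2^{t-1}-3+j = 2^{t-1}-3+(n-2^{t-1}+1) = n-2 = \deg q_{n-2}$. Good, so all three are degree-homogeneous identities, and the relevant free parameter is a single integer (say $j$, with $i=2^{t-1}-2-j$ and $n=2^{t-1}-1+j$).

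\textbf{Approach via induction on $j$.} The plan is to fix $t$, treat the three identities as a vector-valued statement $P(j)$ for $j\geq 0$ (with $i,n$ determined as above), and prove $P(j)$ for all $j$ in the relevant range by induction on $j$. For the base case $j=0$: then $i=2^{t-1}-2$, $n=2^{t-1}-1$, $r_0=1$, $r_{-1}=r_{-2}=0$, so the three identities become $q_{i+1}=q_{n-2}$, $q_i=q_{n-1}$, $q_{i+1}=q_n$. But $n-2 = 2^{t-1}-3 = i-1$... hmm, that does not look right either, so I would instead take the base case to be at the top of the range, $n=2^t-4$ (i.e. $i=1$, $j=2^{t-1}-3$), or more robustly, prove the identities directly from the known relations. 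Actually the cleanest route: these three identities are \emph{exactly} the statement that the descended relation (Corollary~\ref{cor:descending-k3}, giving $(q_i,q_{i+1},w_3q_{i-1})$ as coefficients) and the ascended relation (Corollary~\ref{cor:ascending-k3}, giving $(r_{j-1},w_3r_{j-2},r_j)$) combine correctly. So the real content is: the $3\times 2$ matrix $\begin{pmatrix} q_i & r_{j-1}\\ q_{i+1} & w_3 r_{j-2}\\ w_3 q_{i-1} & r_j\end{pmatrix}$, multiplied on the left by the row vector $(q_{n-2}, q_{n-1}, q_n)$-complement... rather, the identities say that a certain $2\times 2$ "determinant-like" combination of rows reproduces $(q_{n-2},q_{n-1},q_n)$. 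Concretely: identity 1 is $\det$ of rows 1,2 of that matrix crossed with... no — identity 1 is literally $r_{j-1}\cdot(\text{row-1 first entry... })$. I would just write it as: the $2\times 2$ minors of the matrix $M=\begin{pmatrix} q_i & q_{i+1} & w_3 q_{i-1}\\ r_{j-1} & w_3 r_{j-2} & r_j\end{pmatrix}$ are (up to the stated signs, all $+1$ mod 2) $(q_{n-2}, q_{n-1}, q_n)$ — checking: minor from columns $(1,2)$ is $q_i\cdot w_3 r_{j-2} + q_{i+1}r_{j-1}$, which is identity 1; minor from columns $(1,3)$ is $q_i r_j + w_3 q_{i-1} r_{j-1}$, identity 2; minor from columns $(2,3)$ is $q_{i+1}r_j + w_3 q_{i-1}\cdot w_3 r_{j-2} = q_{i+1}r_j + w_3^2 q_{i-1}r_{j-2}$, identity 3. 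So \textbf{the statement to prove is: the three $2\times2$ minors of $M$ equal $q_{n-2}, q_{n-1}, q_n$ respectively.}

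\textbf{Executing the induction.} I would induct on $j$. For the inductive step, use the recursions to express the $(j+1)$-row and $(i-1)$-column data in terms of lower ones. Specifically, $r_j = w_2 r_{j-1} + w_3^2 r_{j-3}$ and $q_{i-1} = w_2 q_{i-3}+w_3 q_{i-4}$ (careful: the $q$-recursion steps by $2$ and $3$, the $r$-recursion by $1$ and $3$), together with $q_n = w_2 q_{n-2}+w_3 q_{n-3}$. The cleanest organization: show that as $j \mapsto j+1$ (so $i \mapsto i-1$, $n\mapsto n+1$), the triple of minors $(q_{n-2},q_{n-1},q_n) \mapsto (q_{n-1},q_n,q_{n+1})$ transforms consistently. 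I would verify this by direct substitution of the three recursions into the three minor expressions and bookkeeping the cancellations mod 2. An alternative I would keep in reserve: prove it via the "master" relation $\sum_l w_l q_{n-l}=0$ combined with an analogous identity for the $r_l$ (one can show $r_{j+1} = w_2 r_j + w_3^2 r_{j-2}$ gives a telescoping), but I expect the direct minor-by-minor induction to be the most transparent.

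\textbf{Main obstacle.} The main difficulty is purely organizational: the $q$-recursion and the $r$-recursion have different "step patterns" ($(2,3)$ versus $(1,3)$), so aligning indices in the inductive step requires care, and one must track several index shifts simultaneously without sign errors (though mod $2$ there are no signs, the risk is dropping or duplicating a term). A secondary subtlety is the boundary behaviour when $i$ is small ($i=0,1,2$), where terms like $q_{i-1}$ or $q_{i-3}$ vanish or degenerate; I would handle the small-$i$ cases (equivalently, large $j$, i.e. $n$ near $2^t-4$, $n=2^t-4$, $n=2^t-5$) as explicit base cases and run the induction downward in $i$ / upward in $j$ from there, or conversely start from $j=0$ where the $r$-terms vanish and only $q$-identities remain—these latter being immediate consequences of the $q$-recursion. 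I would double-check the whole thing against one small explicit value of $t$ (say $t=4$, $n \in \{9,\dots,12\}$) by direct polynomial computation before committing to the inductive argument.
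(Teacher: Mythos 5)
Your overall strategy -- induct on $j$ (equivalently on $n$), feed the recursions for $q_l$ and $r_l$ into the three identities in the inductive step -- is exactly what the paper does, and the inductive step you sketch would go through. The problem is the base case, which you never actually nail down, and where your proposal contains a genuine error.

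At $j=0$ (so $i=2^{t-1}-2$, $n=2^{t-1}-1$) we have $r_{-1}=r_{-2}=0$ and $r_0=1$, so the first identity reads $0=q_{n-2}=q_{2^{t-1}-3}$, not ``$q_{i+1}=q_{n-2}$'' as you wrote (both coefficients $r_{j-1}$ and $w_3r_{j-2}$ vanish, so the left-hand side is zero). The second and third identities do reduce to the trivial $q_i=q_{n-1}$ and $q_{i+1}=q_n$ via $i=n-1$. But the first one is precisely Fukaya's vanishing $q_{2^{t-1}-3}=0$ (Proposition~\ref{prop:qvanishing}), which is a nontrivial arithmetic fact about powers of two, proved via the syzygy of Theorem~\ref{thm:twopower_syzygies} -- it is emphatically \emph{not} ``an immediate consequence of the $q$-recursion,'' as you assert at the end, and the opening claim that the lemma is a ``purely formal statement about the polynomial sequences'' is false: if $2^{t-1}$ were replaced by an arbitrary integer the identities would fail. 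Your fallback of starting the induction at the other end of the range ($n=2^t-4$, $i=1$) does not avoid this either: there the identities involve $r_{2^{t-1}-3},r_{2^{t-1}-4},r_{2^{t-1}-5}$ against $q_0,q_1,q_2$, and verifying them requires closed-form identities relating these $r$'s to $q$'s near $2^t$, which again rest on the same power-of-two vanishing. So the missing ingredient is the explicit input $q_{2^{t-1}-3}=0$ at the induction start; once you put that in, your argument coincides with the paper's proof.
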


\begin{proof}
The start of the induction is the case $n=2^{t-1}-1$, $j=0$ and $i=2^{t-1}-2$, where we can make use of $r_{<0}=0$ and $r_0=1$. In this case, the first equality reduces to $q_{2^{t-1}-3}=0$ from Proposition~\ref{prop:qvanishing}. The second and third equality both reduce to $q_i=q_{n-1}$ which follows from the $i=n-1$ of the assumption.
  
  For the induction step, assume that the three equations are satisfied for $n$, with $i=2^t-n-3$ and $j=n-2^{t-1}-1$. We want to show that the equations for $n+1$, $i-1$ and $j+1$ are also satisfied.

  We first show $r_{j+1}q_i+w_3^2r_{j-1}q_{i-2}=q_{n+1}$:
  \begin{align*}
    q_{n+1}&=w_2r_jq_i+w_2w_3r_{j-1}q_{i-1}+w^2_3r_{j-2}q_i+w_3r_{j-1}q_{i+1}\\
    &=r_{j+1}q_i+w_2w_3r_{j-1}q_{i-1}+w_3r_{j-1}q_{i+1}\\
    &=r_{j+1}q_i+w_2w_3r_{j-1}q_{i-1}+w_2w_3r_{j-1}q_{i-1}+w_3^2r_{j-1}q_{i-2}\\
    &=r_{j+1}q_i+w_3^2r_{j-1}q_{i-2}
  \end{align*}
  The first equality uses the recursion $q_{n+1}=w_2q_{n-1}+w_3q_{n-2}$, combined with the equations for the inductive assumption. The second equality combines the first and third summand using the recursion for $r_{j+1}$, and the third equality uses the recursion $q_{i+1}=w_2q_{i-1}+w_3q_{i-2}$. The two middle terms cancel and we have established the claim.

  Next, we show $r_{j+1}q_{i-1}+w_3r_{j}q_{i-2} =q_{n}$:
  \begin{align*}
    q_{n}&=r_jq_{i+1}+w_3^2r_{j-2}q_{i-1}\\
    &=w_2r_jq_{i-1}+w_3r_jq_{i-2}+w_3^2r_{j-2}q_{i-1}\\
    &=r_{j+1}q_{i-1}+w_3r_jq_{i-2}
  \end{align*}  
  The first equality is the inductive assumption, the second uses the recursion $q_{i+1}=w_2q_{i-1}+w_3q_{i-2}$, and the third uses the recursion for $r_{j+1}$, on the first and third summand in the second line. So the second equation is established.

  Finally, the relation $w_3r_{j-1}q_{i-1}+r_{j}q_{i}=q_{n-1}$ is one of the equations in the inductive assumptions.
\end{proof}

\subsection{The kernel of the Koszul differential $d_1$}
The next proposition shows that $\ker(d_1)$ is a free $W_2$-module of rank 2 and explicitly identifies a $W_2$-basis.
\begin{proposition}
  \label{prop:k3-basis}
  For $\OGr_3(n)$ with $2^{t-1}<n\leq 2^t-4$, with the above conventions $i=2^t-3-n$ and $j=n-2^{t-1}+1$, we have the following equality:
\[
\left(\begin{array}{ccc}q_{n-2}&q_{n-1}&q_n\end{array}\right)
  \left(\begin{array}{c}x\\y\\z\end{array}\right)
    =\det\left|\begin{array}{ccc}x & r_j&w_3q_{i-1}\\
    y&w_3r_{j-2}&q_{i+1}\\
    z&r_{j-1}&q_i\end{array}\right|    
\]
In particular, the 2nd and 3rd column vectors on the right-hand side form a basis for the kernel $\ker(d_1)$ in the Koszul complex, and so $\ker(d_1)$ is a free graded $W_2$-module of rank 2.
\end{proposition}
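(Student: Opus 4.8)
The plan is to verify the displayed determinantal identity by a direct cofactor expansion and then deduce everything from it. Expanding the right-hand $3\times 3$ determinant along its first column (no signs to track, as we work over $\F_2$) gives $x M_1 + y M_2 + z M_3$, where $M_1 = w_3 r_{j-2} q_i + r_{j-1} q_{i+1}$, $M_2 = r_j q_i + w_3 r_{j-1} q_{i-1}$, $M_3 = r_j q_{i+1} + w_3^2 r_{j-2} q_{i-1}$ are the three $2\times 2$ minors of the last two columns. By Lemma~\ref{lem:linear-equations} these equal $q_{n-2}$, $q_{n-1}$, $q_n$, so the determinant equals $q_{n-2}x + q_{n-1}y + q_n z$, which is the left-hand side. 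Substituting for $(x,y,z)$ the second column $u := (r_j, w_3 r_{j-2}, r_{j-1})$ or the third column $v := (w_3 q_{i-1}, q_{i+1}, q_i)$ makes two columns coincide, so the determinant vanishes; by the identity this says $u, v \in \ker(d_1)$, recovering the ascending and descending syzygies of Corollaries~\ref{cor:ascending-k3} and~\ref{cor:descending-k3}. With the grading of $\mathcal{K}_1$ from Remark~\ref{rem:graded}, one checks $u$ and $v$ are homogeneous of degrees $3n-2^t$ and $2^t-3$.

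It remains to show $u, v$ form a free $W_2$-basis of $\ker(d_1)$. First, the three maximal minors of the $3\times 2$ matrix with columns $u, v$ are again $q_{n-2}, q_{n-1}, q_n$ by Lemma~\ref{lem:linear-equations}; these are not all zero, since their ideal has radical $(w_2, w_3)$ by Corollary~\ref{cor:radical}, so $u$ and $v$ are already linearly independent over the fraction field $\F_2(w_2, w_3)$. Now take any $(x,y,z)\in\ker(d_1)$; by the identity, the determinant of the matrix with columns $(x,y,z), u, v$ is zero, so $(x,y,z) = \alpha u + \beta v$ with $\alpha, \beta \in \F_2(w_2, w_3)$. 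Forming suitable $2\times 2$ Cramer-type combinations of the coordinates of this equation, using Lemma~\ref{lem:linear-equations} to collapse the $\alpha$- (resp. $\beta$-) terms while the remaining terms cancel in characteristic $2$ — for instance $q_i x + w_3 q_{i-1} z = \alpha q_{n-1}$ and $w_3 r_{j-2} x + r_j y = \beta q_n$, and analogously for the other two subscripts — yields $\alpha q_{n-2}, \alpha q_{n-1}, \alpha q_n \in W_2$ and likewise $\beta q_{n-2}, \beta q_{n-1}, \beta q_n \in W_2$.

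Finally I would pass from these divisibilities to $\alpha,\beta\in W_2$: writing $\alpha=f/g$ in lowest terms in the UFD $W_2$, the three conditions force $g\mid q_{n-2}$, $g\mid q_{n-1}$, $g\mid q_n$, so $g$ divides $\gcd(q_{n-2},q_{n-1},q_n)$; a common irreducible factor $p$ of the three $q$'s would give $(w_2,w_3)=\sqrt{(q_{n-2},q_{n-1},q_n)}\subseteq(p)$ by Corollary~\ref{cor:radical}, which is impossible since $w_2$ and $w_3$ are coprime. Hence $g$ is a unit, $\alpha\in W_2$, and likewise $\beta\in W_2$, so $u$ and $v$ generate $\ker(d_1)$; together with their independence this gives $\ker(d_1)=W_2u\oplus W_2v$, free of rank $2$ as a graded module. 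I expect this last passage — from $\alpha q_{n-2},\alpha q_{n-1},\alpha q_n\in W_2$ to $\alpha\in W_2$ — to be the only real obstacle; it rests on the coprimality of the relations defining $C$, itself a consequence of $\dim C=0$ via Corollary~\ref{cor:radical}, and it is important that the range $2^{t-1}<n\le 2^t-4$ keeps all the relevant $q$'s and $r$'s nonvanishing (via Proposition~\ref{prop:qvanishing}), so that $u$ and $v$ are genuinely two distinct syzygies.
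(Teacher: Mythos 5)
Your proof is correct, and while it shares the paper's skeleton, the linear-algebra half is carried out by a genuinely different (and arguably tighter) argument. The first part --- verifying the determinant identity by cofactor expansion along the first column, with the three $2\times 2$ minors supplied by Lemma~\ref{lem:linear-equations}, and then observing that substituting $u$ or $v$ for the first column repeats a column and hence lands you in $\ker(d_1)$ --- coincides with the paper's step (1). For the basis claim the paper works over $W_2$ directly: independence is proved by showing that a relation $\lambda u+\mu v=0$ forces $\lambda\mu$ to annihilate $(q_{n-2},q_{n-1},q_n)$ and hence to lie in the nilradical, followed by a separate verification that $u\neq 0$ and $v\neq 0$; spanning is proved by taking a dependence $\alpha s+\lambda u+\mu v=0$ and excluding non-invertible $\alpha$ via reduction modulo a maximal ideal. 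You instead pass to the fraction field, get $s=\alpha u+\beta v$ from the rank computation, and recover the coefficients by explicit Cramer-type combinations --- your sample identities such as $q_ix+w_3q_{i-1}z=\alpha q_{n-1}$ and $w_3r_{j-2}x+r_jy=\beta q_n$ do check out, since the complementary terms cancel in characteristic $2$ and the surviving coefficients are exactly the minors of Lemma~\ref{lem:linear-equations} --- and then clear denominators using that $\gcd(q_{n-2},q_{n-1},q_n)=1$, which you correctly extract from Corollary~\ref{cor:radical} (a common irreducible factor $p$ would give $(w_2,w_3)\subseteq(p)$). Both routes ultimately rest on Corollary~\ref{cor:radical}, but yours buys two things: the rank-two criterion via a nonvanishing maximal minor subsumes the separate check that $u,v\neq 0$ (so Proposition~\ref{prop:qvanishing} is not really needed, contrary to your closing remark), and the Cramer/UFD step sidesteps the residue-field reduction in the paper's step (3), which requires choosing the maximal ideal with some care since the only homogeneous maximal ideal of $\F_2[w_2,w_3]$ is the irrelevant one, where all three $q$'s vanish. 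The price is that your argument is tied to the rank-two situation and would not transport to $k>3$, but there the kernel fails to be free anyway.
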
    

\begin{proof}
  Before we embark on the proof, we recall from Remark~\ref{rem:graded} that the Koszul complex is a complex of graded modules. In particular, kernels of differentials as well as homology groups will inherit a grading, and all linear algebra arguments below will be with homogeneous elements
  
  (1) 
  The matrix equation claimed in the proposition is equivalent to the following three equations, which follow from Lemma~\ref{lem:linear-equations}:
  \begin{align*}
  q_{n-2}&=\det\left(\begin{array}{cc}w_3r_{j-2}&q_{i+1}\\r_{j-1}&q_{i}\end{array}\right)\\
  q_{n-1}&=\det\left(\begin{array}{cc}r_{j}&w_3q_{i-1}\\r_{j-1}&q_{i}\end{array}\right)\\
    q_{n}&=\det\left(\begin{array}{cc}r_{j}&w_3q_{i-1}\\w_3r_{j-2}&q_{i+1}\end{array}\right)
  \end{align*}

  For ease of reference in the next two steps, we denote the 2nd and 3rd column vectors on the right-hand side by $u=(r_j, w_3r_{j-2}, r_{j-1})^{\rm t}$ and $v=(w_3q_{i-1}, q_{i+1},q_i)^{\rm t}$.

  (2) As a next step, we want to show that $u$ and $v$ span a free graded $W_2$-submodule of $W_2^{\oplus 3}$ of rank 2, i.e., that for any $W_2$-linear combination $\lambda u+\mu v=0$ with homogeneous $\lambda,\mu$ we have $\lambda=\mu=0$. So assume we have such a $W_2$-linear combination. This means that the determinants of the three $(2\times 2)$-minors of the $(3\times 2)$-matrix $(u,v)$ are annihilated by $\lambda\mu$. By the matrix equation established in (1), these determinants are $q_{n-2}$, $q_{n-1}$ and $q_n$. Note that $\sqrt{(q_{n-2},q_{n-1},q_n)}=(w_2,w_3)$, by Corollary~\ref{cor:radical}. This means that for any homogeneous prime ideal $\mathfrak{p}\subseteq W_2=\mathbb{F}_2[w_2,w_3]$ which is not the irrelevant ideal $(w_2,w_3)$, at least one of $q_{n-2},q_{n-1}$ and $q_n$ is not contained in $\mathfrak{p}$, so that the annihilator of $(q_{n-2},q_{n-1},q_n)$ must be contained in $\mathfrak{p}$. In particular, $\lambda\mu$ is contained in the nilradical of $W_2$, which is $(0)$, i.e., one of $\lambda$ and $\mu$ is $0$.

  To conclude that the other coefficient also has to be 0, we need to know that $u$ and $v$ are both non-zero, then the claim follows since $W_2$ is an integral domain. From \cite[Lemma~2.3 (ii)]{Korbas2015}, we know that $q_j=0$ if and only if $j=2^t-3$. In particular, no three consecutive $q_j$ can be 0, and thus $v\neq 0$. Our assumption also implies that none of $q_{n-2}$, $q_{n-1}$ and $q_n$ are 0. The determinant formulas from step (1) then imply that no two $r$'s appearing in $u$ vanish at the same time, and thus $u\neq 0$. Therefore, if one of the coefficients $\lambda,\mu$ is 0, then so is the other one.
  
(3) Finally, we need to show that $u$ and $v$ actually span $\ker(d_1)$. It is easy to check that ${\rm span}\langle u,v\rangle\subseteq \ker(d_1)$. The matrix equation, established in step (1), implies that for a vector $s=(x,y,z)^{\rm t}\in\ker(d_1)$ in the kernel, the set $\{s,u,v\}$ is linearly dependent, i.e., there exist homogeneous $\alpha,\lambda,\mu\in W_2$ such that $\alpha s+\lambda u+\mu v=0$. If we can show that $\alpha\in W_2^\times$, we have $s\in{\rm span}\langle u,v\rangle$ and we are done. So assume that $\alpha$ is not invertible, then there exists a homogeneous maximal ideal $\mathfrak{m}\subseteq W_2$ such that $\alpha\in\mathfrak{m}$. After tensoring with the residue field $\mathbb{F}=W_2/\mathfrak{m}$, we get $\overline{\lambda}\overline{u}+\overline{\mu}\overline{v}=0$. The proof of (2) actually shows that this is impossible, since at least one of the determinants of the $(2\times 2)$-minors (i.e.\ $q_{n-2},q_{n-1},q_n$ with radical $(w_2,w_3)$) is nonzero in $\mathbb{F}$. 
\end{proof}

\begin{remark}\label{rmk:nonvanishing_r_coefficients}
  The proof actually shows that all the coefficients $r_{\geq 0}$ appearing in the relation are in fact nonzero. If one of them was nonzero, the basis $u,v$ would not span the kernel. But then some $u$ would be divisible by $w_3$, leading to a linear expression $w_3s+u=0$. The last step in the proof shows that this is not possible, and we in fact do get all the elements in the kernel.
\end{remark}

\subsection{Presentation of the anomalous module}
Using Proposition \ref{prop:k3-basis}, we can name explicit elements $A_n,D_n\in K_3(n)$ via the boundary map $\de$ of the long exact sequence of Koszul homologies \eqref{eq:KoszulLES}, as described in Remark \ref{rmk:explicit}, in particular \eqref{eq:LESboundary}. The first generator of $\ker(d_1)$ is the descended relation:
\[
q_iq_n+q_{i+1}q_{n-1}+w_3q_{i-1}q_{n-2}=0.
\]
Denote by $D_n\in W_1$ the image of this relation via $\de$:
\begin{equation}\label{eq:descended_generator}
	D_n=q_ip_n+q_{i+1}p_{n-1}+w_3q_{i-1}p_{n-2}.
\end{equation}
Similarly, denote by $A_n\in W_1$ the image of the ascended relation via $\de$:
\begin{equation}\label{eq:ascended_generator}
	A_n=r_{j-1}p_n+w_3r_{j-2}p_{n-1}+r_jp_{n-2}.
\end{equation}
In order not to overburden notation, we will also denote by $A_n,D_n$ the reductions of these classes from $W_1$ to ${\rm H}^*(\Gr_3(n);\F_2)=W_1/(Q_{n-2},Q_{n-1},Q_n)$.
From Proposition \ref{prop:k3-basis} and the long exact sequence \eqref{eq:KoszulLES} it follows that $K_3(n)$ is generated by these two elements as a $C$-module, however so far these elements could just as well be 0.
In this section, we will show that this is not the case, and compute the relations between these elements.

\begin{proposition}
  \label{prop:K-relations}
  The graded $W_2$-module $K$ has a presentation by 2 generators $A_n$ and $D_n$ in degrees $\deg A_n=3n-2^t-1$ and $\deg D_n=2^t-4$. These two generators satisfy 3 relations of degrees $2n-4, 2n-3$ and $2n-2$, respectively, given as follows:
  \[
  q_iA_n+r_{j-1}D_n, \qquad
  q_{i+1}A_n+w_3r_{j-2}D_n, \qquad
  w_3q_{i-1}A_n+r_jD_n.
  \]
  A similar presentation holds for $K$ as graded $C$-module.
\end{proposition}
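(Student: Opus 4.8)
The plan is to read the presentation off the Koszul complex using the identification $K\cong H_1(Q,W_2)=\ker(d_1)/\im(d_2)$ from Section~\ref{sec:Koszul_and_K}, where $\delta$ of the long exact sequence \eqref{eq:KoszulLES} induces the isomorphism. By Proposition~\ref{prop:k3-basis}, $\ker(d_1)$ is a free graded $W_2$-module with basis the two column vectors $u=(r_j,w_3r_{j-2},r_{j-1})^{\rm t}$ and $v=(w_3q_{i-1},q_{i+1},q_i)^{\rm t}$, which are precisely the vectors encoding the ascended relation \eqref{eq:ascending_3} and the descended relation \eqref{eq:descending_3}. Comparing with the definitions \eqref{eq:ascended_generator}, \eqref{eq:descended_generator} and the explicit boundary formula \eqref{eq:LESboundary}, I would note that $\delta$ sends $u$ and $v$ to $A_n$ and $D_n$ respectively. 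Since $d_1d_2=0$, the differential $d_2$ factors through $\ker(d_1)$, so using the basis $(u,v)$ we get $K\cong\ker(d_1)/\im(d_2)=W_2\langle A_n,D_n\rangle/\im(d_2)$, and it remains to compute $d_2$ on the three free generators $e_1\wedge e_2,\ e_1\wedge e_3,\ e_2\wedge e_3$ of $\mathcal{K}_2=\wedge^2W_2^{\oplus 3}$ and express the results in terms of $u$ and $v$.

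The core of the argument is this re-expression. From Definition~\ref{def:koszul} (and working mod~$2$, with $f(e_\ell)$ the $\ell$-th of $q_{n-2},q_{n-1},q_n$) one has $d_2(e_1\wedge e_2)=q_{n-1}e_1+q_{n-2}e_2$, $d_2(e_1\wedge e_3)=q_ne_1+q_{n-2}e_3$, $d_2(e_2\wedge e_3)=q_ne_2+q_{n-1}e_3$. The claim is then
\[
d_2(e_1\wedge e_2)=q_iu+r_{j-1}v,\qquad d_2(e_1\wedge e_3)=q_{i+1}u+w_3r_{j-2}v,\qquad d_2(e_2\wedge e_3)=w_3q_{i-1}u+r_jv,
\]
each of which is a short component-by-component check using exactly the three identities of Lemma~\ref{lem:linear-equations}; for instance the $e_1$-component of $q_iu+r_{j-1}v$ is $r_jq_i+w_3r_{j-1}q_{i-1}=q_{n-1}$, its $e_2$-component is $w_3r_{j-2}q_i+r_{j-1}q_{i+1}=q_{n-2}$, and its $e_3$-component is $q_ir_{j-1}+r_{j-1}q_i=0$. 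Hence $\im(d_2)$ is generated as a $W_2$-module by the vectors $(q_i,r_{j-1})$, $(q_{i+1},w_3r_{j-2})$, $(w_3q_{i-1},r_j)$ in the basis $(A_n,D_n)$, which is precisely the asserted presentation. The degree count is routine: with $\deg A_n=3n-2^t-1$, $\deg D_n=2^t-4$, $\deg q_i=2^t-3-n$ and $\deg r_j=2j=2n-2^t+2$, the three relations land in degrees $2n-4$, $2n-3$, $2n-2$.

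Two remaining points I would address. First, $A_n$ and $D_n$ really are nonzero in $K$: their degrees $3n-2^t-1$ and $2^t-4$ are both strictly below the minimal relation degree $2n-4$ (the inequality $3n-2^t-1<2n-4$ uses $n\leq 2^t-4$, and $2^t-4<2n-4$ uses $n>2^{t-1}$), so the presentation map is injective in those degrees. Second, the $C$-module presentation follows by base change along $W_2\twoheadrightarrow C=W_2/(q_{n-2},q_{n-1},q_n)$: for $x\in K=\ker w_1$ one has $q_\ell x=P_\ell x=w_1p_\ell x=p_\ell(w_1x)=0$ in the notation of Remark~\ref{rmk:explicit}, so $K$ is annihilated by $(q_{n-2},q_{n-1},q_n)$, whence $K\otimes_{W_2}C=K$ and the three relations descend verbatim.

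\textbf{Main obstacle.} Nothing here is deep once Proposition~\ref{prop:k3-basis} is in hand; the only real work is the three component-wise identities matching $d_2(e_a\wedge e_b)$ with the prescribed combinations of $u$ and $v$, which must be bookkept carefully (the $\mathcal{K}_1$-grading, the mod~$2$ sign conventions, and the exact indexing in Lemma~\ref{lem:linear-equations}). The conceptual content is simply the recognition that the stated relations are the images under the Koszul boundary $d_2$ of the standard basis of $\mathcal{K}_2$, rewritten in the rank-two basis of $\ker(d_1)$.
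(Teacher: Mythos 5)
Your proposal is correct and follows essentially the same route as the paper: both use Proposition~\ref{prop:k3-basis} to identify the free basis $u,v$ of $\ker(d_1)$, then express the columns of the Koszul differential $d_2$ in that basis via exactly the three identities of Lemma~\ref{lem:linear-equations}. Your two added remarks (nonvanishing of $A_n,D_n$ by comparing generator degrees with the minimal relation degree $2n-4$, and the base change to $C$) are correct and make explicit points the paper only asserts or defers to the surrounding discussion.
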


\begin{proof}
  We first note that the degrees actually are as claimed: this follows from \eqref{eq:descended_generator}, \eqref{eq:ascended_generator}, $\deg(p_l)=\deg(q_l)-1=l-1$ and $\deg(r_l)=2l$. 
  
  The differential $d_2$ for the Koszul complex is given by the following matrix, cf.~Definition~\ref{def:koszul}:
  \[
  \left(\begin{array}{ccc}
    q_{n-1}&q_n&0\\
    q_{n-2}&0&q_n\\
    0&q_{n-2}&q_{n-1}
  \end{array}\right)
  \]
  From Proposition~\ref{prop:k3-basis}, we know that $\ker d_1$ is a free rank 2 module. A presentation of $K$, as the first homology of the Koszul complex, can thus be obtained by writing the columns of the above matrix in terms of the basis vectors given in Proposition~\ref{prop:k3-basis}. We thus need to solve the following system of $W_2$-linear equations:
  \[
  \left(\begin{array}{ccc}
    q_{n-1}&q_n&0\\ q_{n-2}&0&q_n\\ 0&q_{n-2}&q_{n-1}
  \end{array}\right)
  =\left(\begin{array}{cc}
    r_j&w_3q_{i-1}\\ w_3r_{j-2}&q_{i+1}\\ r_{j-1}&q_i
  \end{array}\right)\cdot
  \left(\begin{array}{ccc}
    \lambda_{11}&\lambda_{12}&\lambda_{13}\\
    \lambda_{21}&\lambda_{22}&\lambda_{23}    
  \end{array}\right)
  \]
  The columns in the right-most matrix will then describe the coefficients of the three relations between the two generators.

  The claim in the proposition is that the following matrix is a solution of the system:
  \[
  \left(\begin{array}{ccc}
    \lambda_{11}&\lambda_{12}&\lambda_{13}\\
    \lambda_{21}&\lambda_{22}&\lambda_{23}    
  \end{array}\right)=
  \left(\begin{array}{ccc}
    q_i&q_{i+1}&w_3q_{i-1}\\
    r_{j-1}&w_3r_{j-2}&r_j
  \end{array}\right)
  \]
  One easily checks that the product has the symmetric structure required for the $d_2$-differential of the Koszul complex. It remains to check the following three equations, which are established in Lemma~\ref{lem:linear-equations}:
  \begin{align*}
    w_3r_{j-2}q_i+r_{j-1}q_{i+1}&=q_{n-2}\\
    r_jq_i+w_3r_{j-1}q_{i-1} &=q_{n-1}\\
    r_jq_{i+1}+w_3^2r_{j-2}q_{i-1}&=q_n\qedhere
  \end{align*}
\end{proof}
  This concludes the proof of Theorem \ref{thm:anomalousmodule}.
  \subsection{Koszul boundary}\label{sec:koszulboundary}
In this section we show how at the boundary of the interval $2^{t-1}<n<2^t-3$ the ascending and descending relations fall in the image of the Koszul boundary $d_2$. In other words we explain why there is only one generator in these cases.
\begin{proposition}

The descending relation \eqref{eq:descending_3}
\[    q_iq_{n}+q_{i+1}q_{n-1}+w_3q_{i-1}q_{n-2}=0\]	
is in the image of the Koszul boundary $d_2$ for $n=2^{t-1}$ and the corresponding $i=2^{t-1}-3$.

The ascending relation \eqref{eq:ascending_3}
\[
r_{j-1}q_{n}+w_3r_{j-2}q_{n-1}+r_jq_{n-2}=0
\]
is in the image of the Koszul boundary $d_2$ for $n=2^t-3$ and the corresponding $j=2^{t-1}-2$.
\end{proposition}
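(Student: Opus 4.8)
The statement is a boundary-case companion to Proposition~\ref{prop:K-relations}: at the two ends of the interval $2^{t-1}<n<2^t-3$, the relevant ascending/descending syzygy becomes "inessential", i.e.\ it lies in $\operatorname{im}(d_2)$ inside the Koszul complex, so it contributes nothing to the presentation of $K$ and $K$ is cyclic (consistent with Corollary~\ref{cor:powerful}). Concretely, $d_2\colon\mathcal{K}_2\to\mathcal{K}_1$ is the map
\[
\left(\begin{array}{ccc}q_{n-1}&q_n&0\\ q_{n-2}&0&q_n\\ 0&q_{n-2}&q_{n-1}\end{array}\right),
\]
so I need to exhibit, for each of the two cases, an explicit vector $(\lambda_1,\lambda_2,\lambda_3)^{\mathrm t}\in W_2^{\oplus 3}$ whose image under $d_2$ equals the coefficient vector of the syzygy in question.

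**Case $n=2^{t-1}$, $i=2^{t-1}-3$ (descending relation).** Here the descending relation $q_iq_n+q_{i+1}q_{n-1}+w_3q_{i-1}q_{n-2}=0$ has coefficient vector (ordered as $(q_{n-2},q_{n-1},q_n)$) equal to $(w_3q_{i-1},\,q_{i+1},\,q_i)^{\mathrm t}$. I would first observe that at this value, $i+2=2^{t-1}-1$, and—crucially—applying Fukaya's vanishing (Proposition~\ref{prop:qvanishing}) for the parameter $t-1$ gives $q_{2^{t-1}-3}=0$, i.e.\ $q_i=0$. Then the target vector is $(w_3q_{i-1},q_{i+1},0)^{\mathrm t}$, which one recognizes directly as $d_2$ of a suitable column: one checks that $q_{i-1}=$ (some combination expressible via the recursion $q_\ell=w_2q_{\ell-2}+w_3q_{\ell-3}$) so that $w_3q_{i-1}$ and $q_{i+1}$ both factor appropriately through $q_{n-2}=q_{2^{t-1}-2}$ and $q_{n-1}=q_{2^{t-1}-1}$. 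The natural guess is the preimage $(\,0,\,*,\,*\,)^{\mathrm t}$ or $(\,*,\,0,\,0\,)^{\mathrm t}$; I would use the recursion relations among the $q_\ell$'s near $\ell=2^{t-1}$ together with $q_{2^{t-1}-3}=0$ to pin down the coefficients. The key identity to verify is purely a matter of solving a small $W_2$-linear system, so the work is routine once the vanishing $q_i=0$ is invoked.

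**Case $n=2^t-3$, $j=2^{t-1}-2$ (ascending relation).** Here the ascending relation $r_{j-1}q_n+w_3r_{j-2}q_{n-1}+r_jq_{n-2}=0$ has coefficient vector $(r_j,\,w_3r_{j-2},\,r_{j-1})^{\mathrm t}$. Now $n=2^t-3$ means $q_n=q_{2^t-3}=0$ by Proposition~\ref{prop:qvanishing}, so the target is $(r_j,w_3r_{j-2},0)^{\mathrm t}$. Again I would exhibit a preimage under $d_2$: since the third column of $d_2$ is $(0,q_n,q_{n-1})^{\mathrm t}=(0,0,q_{n-1})^{\mathrm t}$ and $q_n=0$ collapses the second column to $(q_{n-1},0,q_{n-2})^{\mathrm t}$, I need $\lambda_1 q_{n-1}+\lambda_2 q_{n-1} = r_j$-type identities; in practice the cleanest route is to use the determinant formulas of Proposition~\ref{prop:k3-basis} (or equivalently Lemma~\ref{lem:linear-equations}) specialized to these $i,j$, together with $q_n=0$, to express $r_j$ and $w_3r_{j-2}$ as explicit $W_2$-combinations of $q_{n-2}$ and $q_{n-1}$. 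I expect the required preimage to have a clean closed form in terms of the $q_\ell$ with small index.

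**Main obstacle.** The conceptual content is light—both cases reduce, via Fukaya's vanishing $q_{2^s-3}=0$, to a finite linear-algebra check that a specific coefficient vector is a $W_2$-combination of the columns of $d_2$. The only real care needed is bookkeeping: correctly specializing the indices $i,j$ at the two endpoints, tracking which $q_\ell$ vanish, and producing the explicit preimage vectors; it is worth double-checking the preimage lies in $\mathcal{K}_2=\wedge^2 W_2^{\oplus 3}$ with the right symmetry so that it genuinely comes from $d_2$. No step should require more than the recursion for $q_\ell$, the recursion for $r_j$, and Lemma~\ref{lem:linear-equations}.
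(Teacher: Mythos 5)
Your plan is correct and follows the same overall strategy as the paper: in each boundary case, invoke the vanishing $q_{2^s-3}=0$ from Proposition~\ref{prop:qvanishing} to kill one term of the syzygy, and then identify the surviving two-term relation with the first column $(q_{n-1},q_{n-2},0)^{\rm t}$ of $d_2$. For the descending case your sketch reduces to exactly the paper's computation ($q_{i+1}=q_{n-2}$ is immediate from $i=n-3$, and $w_3q_{i-1}=q_{n-1}$ follows from the recursion once $q_i=0$). Where you genuinely diverge is the ascending case: the paper establishes the needed identities $r_{2^{t-1}-2}=q_{2^t-4}$ and $w_3r_{2^{t-1}-4}=q_{2^t-5}$ via a separate lemma, proved by an induction on $t$ using the doubling formulas $q_j=w_2^{2^s}q_{j-2\cdot 2^s}+w_3^{2^s}q_{j-3\cdot 2^s}$ and their $r$-analogues, whereas you propose to read them off from Lemma~\ref{lem:linear-equations} specialized at $i=0$ (where $q_0=1$ and $q_{\pm 1}$ vanish, so the first two equations literally become $w_3r_{j-2}=q_{n-2}$ and $r_j=q_{n-1}$). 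That specialization is legitimate --- the induction proving Lemma~\ref{lem:linear-equations} starts at $n=2^{t-1}-1$ and runs through $n=2^t-3$ --- and it is shorter than the paper's route, reusing machinery already in place instead of introducing new doubling identities. Two small corrections: with $q_n=0$ the second column of $d_2$ collapses to $(0,0,q_{n-2})^{\rm t}$, not $(q_{n-1},0,q_{n-2})^{\rm t}$; and note that both you and the paper silently replace the ascending coefficient vector $(r_j,w_3r_{j-2},r_{j-1})^{\rm t}$ by its truncation $(r_j,w_3r_{j-2},0)^{\rm t}$. Since $r_{j-1}\neq 0$ and $\deg r_{j-1}<\deg q_{n-2}$, the untruncated vector is \emph{not} in $\operatorname{im}(d_2)$; the two differ by $r_{j-1}\cdot(0,0,1)^{\rm t}$, which is $r_{j-1}$ times the descended generator at $n=2^t-3$, so what is really being shown is that the ascending relation is redundant modulo $\operatorname{im}(d_2)$ and the other generator --- which is all that is needed for the intended conclusion that $K$ is cyclic there.
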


\begin{proof}
The descending relation is
\[    q_{2^{t-1}-3}q_{2^{t-1}}+q_{2^{t-1}-2}q_{2^{t-1}-1}+w_3q_{2^{t-1}-4}q_{2^{t-1}-2}=0\]
which using $w_3q_{2^{t-1}-4}=q_{2^{t-1}-1}$ and $q_{2^{t-1}-3}=0$ is
\[q_{2^{t-1}-2}q_{2^{t-1}-1}+q_{2^{t-1}-1}q_{2^{t-1}-2}=0,\]
which is clearly an element in the image of $d_2$. The ascending relation is the following (using $q_{2^t-3}=0$):
\[
w_3r_{2^{t-1}-4}q_{2^t-4}+r_{2^{t-1}-2}q_{2^t-5}=0
\]
We can conclude, since the coefficients are $q_{2^t-5}$ and $q_{2^t-4}$, by the following lemma.
\end{proof}

\begin{lemma}
	For all $t\geq 2$, the following relations hold:
	\[r_{2^{t-1}-2}=q_{2^t-4},\qquad w_3r_{2^{t-1}-4}=q_{2^t-5}.\]
The recursion on $r_n$ also implies $r_{2^{t-1}-1}=q_{2^t-2}$.
\end{lemma}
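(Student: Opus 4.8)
The statement to prove is that for all $t\geq 2$ we have $r_{2^{t-1}-2}=q_{2^t-4}$, $w_3r_{2^{t-1}-4}=q_{2^t-5}$, and consequently $r_{2^{t-1}-1}=q_{2^t-2}$, where $q$ and $r$ are the polynomials in $\F_2[w_2,w_3]$ defined by the recursions $q_i=w_2q_{i-2}+w_3q_{i-3}$ and $r_{i+1}=w_2r_i+w_3^2r_{i-2}$. The plan is to prove these by induction on $t$, using the recursions together with the closed forms \eqref{eq:multinomial} and \eqref{eq:rj_closed}; the comparison of closed forms is probably the cleanest route and I would favour it. Recall $q_j=\sum_{j=2a_2+3a_3}\binom{a_2+a_3}{a_2}w_2^{a_2}w_3^{a_3}$ and $r_j=\sum_{2j=2b_2+6b_3}\binom{b_2+b_3}{b_2}w_2^{b_2}w_3^{2b_3}$.

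First I would establish the first equality $r_{2^{t-1}-2}=q_{2^t-4}$ directly by comparing monomials. In $r_{2^{t-1}-2}$, the exponent of $w_3$ is always even, say $w_3^{2b_3}$, with $2b_2+6b_3 = 2^t-4$, i.e.\ $b_2+3b_3 = 2^{t-1}-2$, and coefficient $\binom{b_2+b_3}{b_2}\bmod 2$. In $q_{2^t-4}$, a monomial $w_2^{a_2}w_3^{a_3}$ has $2a_2+3a_3=2^t-4$, which forces $a_3$ even (since $2^t-4$ is even), say $a_3=2b_3$; then $a_2+3b_3=2^{t-1}-2$, matching the constraint, with coefficient $\binom{a_2+2b_3}{a_2}\bmod 2$. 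So the monomials match up, and the content of the claim is the mod-2 identity $\binom{b_2+b_3}{b_2}\equiv\binom{b_2+2b_3}{b_2}\pmod 2$ whenever $2b_2+6b_3=2^t-4$. I would prove this via Lucas/Kummer (Lemma~\ref{lem:multinomial}): both are $1$ iff the relevant base-2 digit sets are disjoint, and one checks that under the constraint $b_2+3b_3=2^{t-1}-2$ the disjointness of $(b_2,b_3)$ is equivalent to the disjointness of $(b_2,2b_3)$ --- shifting $b_3$ up by one bit cannot create or destroy an overlap because the total $b_2+b_3$ resp.\ $b_2+2b_3$ is tightly constrained (it equals $2^{t-1}-2-2b_3$ resp.\ depends likewise), so carrying behaviour is forced. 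The second equality $w_3r_{2^{t-1}-4}=q_{2^t-5}$ is entirely analogous: $q_{2^t-5}$ has $2a_2+3a_3=2^t-5$ odd, forcing $a_3$ odd, $a_3=2c_3+1$, and one matches with $r_{2^{t-1}-4}$ after peeling off one factor $w_3$.

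Alternatively --- and this may be the safer bottom-up approach if the closed-form digit bookkeeping gets unwieldy --- I would prove the three identities simultaneously by induction on $t$. The base case $t=2$ is a direct check ($r_0=1=q_0$... adjusting indices, $r_{2^{t-1}-2}$ etc.\ are small polynomials). For the inductive step, given the identities at level $t$, I would use the $r$-recursion $r_{i+1}=w_2r_i+w_3^2r_{i-2}$ to climb from indices around $2^{t-1}$ to indices around $2^t$, and the $q$-recursion $q_i=w_2q_{i-2}+w_3q_{i-3}$ similarly; the key auxiliary fact is Fukaya's vanishing $q_{2^t-3}=0$ (Proposition~\ref{prop:qvanishing}), which makes several recursion terms drop out. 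The third identity $r_{2^{t-1}-1}=q_{2^t-2}$ then follows from the first two plus one application of each recursion: $r_{2^{t-1}-1}=w_2r_{2^{t-1}-2}+w_3^2r_{2^{t-1}-4}=w_2q_{2^t-4}+w_3\cdot(w_3r_{2^{t-1}-4})=w_2q_{2^t-4}+w_3q_{2^t-5}=q_{2^t-2}$, using $q_{2^t-2}=w_2q_{2^t-4}+w_3q_{2^t-5}$.

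The main obstacle I anticipate is the mod-2 binomial-coefficient identity $\binom{b_2+b_3}{b_2}\equiv\binom{b_2+2b_3}{b_2}\pmod 2$ under the arithmetic constraint $b_2+3b_3=2^{t-1}-2$ (and its odd-case analogue): making the Lucas-theorem digit argument airtight requires carefully tracking how the constraint pins down the carries, and it is the one place where a slick statement hides a genuine combinatorial computation. If that proves delicate, the fallback is the purely inductive argument via the two recursions and $q_{2^t-3}=0$, which involves no binomial coefficients at all, only polynomial algebra in $\F_2[w_2,w_3]$, at the cost of a slightly longer bookkeeping of indices near $2^{t-1}$ and $2^t$.
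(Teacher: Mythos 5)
Both routes you sketch fall short of a complete proof, each at a different point. In the closed-form route, everything reduces (correctly) to the mod-2 identity $\binom{b_2+b_3}{b_2}\equiv\binom{b_2+2b_3}{b_2}\pmod 2$ under the constraint $b_2+3b_3=2^{t-1}-2$ (and its odd analogue for the second equality). This identity is true, but your justification --- ``shifting $b_3$ up by one bit cannot create or destroy an overlap because the total is tightly constrained, so carrying behaviour is forced'' --- is not an argument. Without the constraint the two parities genuinely differ (e.g.\ $b_2=2$, $b_3=1$ gives $\binom{3}{2}$ odd but $\binom{4}{2}$ even), so any proof must actually exploit the binary form of $2^{t-1}-2$; you flag this as the main obstacle, rightly, but you do not overcome it.

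The fallback route has a more structural gap. An induction on $t$ must relate $r_{2^{t-1}-2}$ to $r$'s with index near $2^{t-2}-2$, i.e.\ bridge a gap of size about $2^{t-2}$, and single applications of $r_{i+1}=w_2r_i+w_3^2r_{i-2}$ (which step by only $1$ or $3$) cannot do this without an exponential proliferation of terms; likewise for $q$. The missing ingredient --- and the one the paper actually uses --- is the pair of characteristic-2 ``squared'' recursions $q_j=w_2^{2^s}q_{j-2\cdot 2^s}+w_3^{2^s}q_{j-3\cdot 2^s}$ and $r_j=w_2^{2^s}r_{j-2^s}+w_3^{2^{s+1}}r_{j-3\cdot 2^s}$, together with $r_{3\cdot 2^s-2}=w_2^{2^s}r_{2^{s+1}-2}$, obtained by iterating the basic recursions and applying the Frobenius. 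With $s=t-2$ and $s=t-3$ these jump exactly the right distances, and a two-step induction on $t$ (base cases $t=2,3$) closes the first equality in a few lines, the second being analogous. Note also that Fukaya's vanishing $q_{2^t-3}=0$, which you name as the key auxiliary fact, plays no role in the paper's proof of this lemma. Your derivation of the third identity $r_{2^{t-1}-1}=q_{2^t-2}$ from the first two via one application of each recursion is correct and is exactly what the statement intends.
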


\begin{proof}
Using the recursion \eqref{eq:recursion}, one can show that the $q_j$ satisfy \cite[(2.6)]{Korbas2015} 
\[
q_j=w_2^{2^s}q_{j-2\cdot 2^s}+w_3^{2^s}q_{j-3\cdot 2^s},
\]
for all $s$ such that $j\geq 1+3\cdot 2^s$. By a similar inductive argument on $s$ using the recursion \eqref{eq:r_recursion}, one can show 
\[
r_j=w_2^{2^s}r_{j-2^s}+w_3^{2^{s+1}}r_{j-3\cdot 2^{s}}
\]
and similarly
\begin{equation}\label{eq:r_3times}
r_{3\cdot 2^{t}-2}=w_2^{2^{t}}r_{2^{t+1}-2}.
\end{equation}
Using this, we can show the first equality $r_{2^{t-1}-2}=q_{2^t-4}$ by induction. The first step for $t=2,3$ states that $r_0=q_0=1$ and $r_2=q_4=w_2^2$. Using the recursions \eqref{eq:recursion}, \eqref{eq:r_recursion} and \eqref{eq:r_3times}, the induction step is 
\begin{align*}
q_{2^t-4}=w_2^{2^{t-2}}q_{2^{t-1}-4}+w_3^{2^{t-2}}q_{2^{t-2}-4}
=w_2^{2^{t-3}}\underbrace{w_2^{2^{t-3}}r_{2^{t-2}-2}}_{r_{3\cdot 2^{t-3}-2}}+w_3^{2^{t-2}}r_{2^{t-3}-2}=r_{2^{t-1}-2}.
\end{align*}
The other case can be obtained by an entirely analogous argument.
\end{proof}

\section{Vanishing of Ext-groups for \texorpdfstring{$k=3$}{k=3}}
\label{sec:ext-class}

Given the presentation of $K$ as a $C$-module established in Section~\ref{sec:K-presentation}, we will now show that the $\Ext$-group ${\rm Ext}^1_C(K,C)$ always vanishes in the $k=3$ case, basically for degree reasons. This implies, in particular, that the extension \eqref{eq:introgysin} always splits, and ${\rm H}^*(\OGr_k(n),\mathbb{F}_2)$ is, as a $C$-module, simply isomorphic to $K\oplus C$. Some basics on Ext-groups, how to compute them and how they relate to extensions can be found in Appendix~\ref{sec:ext-basics}. Note that we are interested in graded degree 0 extensions, i.e., extensions of graded $C$-modules where all the maps preserve degrees.

\begin{proposition}
  \label{prop:ext-vanishing-k3}
  For all $n\geq 3$, we have ${\rm Ext}^1_C(K,C)=0$. 
\end{proposition}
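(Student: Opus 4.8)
The plan is to show that every graded $C$-module extension of $K$ by $C$ splits, by analyzing the possible degrees in which a nontrivial extension class could live and checking that $C$ is zero there. Concretely, I would use the presentation of $K$ from Proposition~\ref{prop:K-relations}: $K$ has two generators $A_n, D_n$ in degrees $\deg A_n = 3n-2^t-1$ and $\deg D_n = 2^t-4$, subject to three relations of degrees $2n-4$, $2n-3$, $2n-2$. From the short resolution $C\langle R_1,R_2,R_3\rangle \to C\langle A_n,D_n\rangle \to K\to 0$ (degrees of the $R_i$ being $2n-4,2n-3,2n-2$), one computes ${\rm Ext}^1_C(K,C)$ as a subquotient of $\Hom_C(C\langle R_1,R_2,R_3\rangle, C) = C(-(2n-4))\oplus C(-(2n-3))\oplus C(-(2n-2))$ in internal degree $0$; i.e. the relevant groups are $C^{2n-4}\oplus C^{2n-3}\oplus C^{2n-2}$ modulo the image of $\Hom(C\langle A_n,D_n\rangle,C)$ in the same degrees, which involves $C^{3n-2^t-1}$ and $C^{2^t-4}$.

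The main step is therefore a degree estimate: I would show that $C$ vanishes in all the degrees $2n-2$, $2n-3$, $2n-4$ that appear in the Hom-complex contributing to ${\rm Ext}^1$. The ring $C = \F_2[w_2,w_3]/(q_{n-2},q_{n-1},q_n)$ is a finite-dimensional graded $\F_2$-algebra whose top nonzero degree is the dimension $N = \dim \Gr_3(n) = 3(n-3)$; but more to the point, by Poincaré duality (Proposition~\ref{prop:pd}) $C$ in degree $d$ is dual to $K$ in degree $N-d$, and $K$ is concentrated in a band of degrees bounded below by $\deg D_n = 2^t-4$ (its generator of smallest degree). Hence $C^d = 0$ for $d > N - (2^t-4)$. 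Plugging in $N = 3n-9$ and using $n \le 2^t-4$, one gets $N - (2^t-4) = 3n-9-2^t+4 = 3n-2^t-5 < 2n-4$ precisely when $n < 2^t-1$, which holds throughout our range $2^{t-1}<n\le 2^t-4$. So all three relevant degrees $2n-4,2n-3,2n-2$ exceed the top degree of $C$, the Hom-group into which the Ext sits is already zero, and ${\rm Ext}^1_C(K,C)=0$. (The edge cases with $n$ small, e.g. $n=3$, should be handled by a direct check or by noting $K$ is cyclic there so Corollary~\ref{cor:powerful} applies.)

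The step I expect to require the most care is pinning down exactly which internal degrees of $C$ control ${\rm Ext}^1_C(K,C)$ — that is, getting the bookkeeping of the graded Hom-complex right, since a free presentation of $K$ rather than a full free resolution only computes ${\rm Ext}^1$ correctly up to the cokernel of the first map, and one must be sure no lower-degree contributions sneak in. Once the identification ${\rm Ext}^1_C(K,C) \subseteq \bigoplus_i C^{\deg R_i}$ (internal degree $0$) is in place, the vanishing is immediate from the degree bound on $C$ via Poincaré duality. An alternative, perhaps cleaner, route would be to bound the top degree of $C$ directly from the closed form \eqref{eq:multinomial} together with the vanishing $q_{2^t-3}=0$ (Proposition~\ref{prop:qvanishing}), but the Poincaré duality argument via the known bottom degree $\deg D_n = 2^t-4$ of $K$ is the most economical.
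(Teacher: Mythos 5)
Your overall strategy is the same as the paper's: reduce to the cases $2^{t-1}<n\leq 2^t-4$ (handling $n=2^t-3,\dots,2^t$ via cyclicity of $K$ and Corollary~\ref{cor:powerful}), identify ${\rm Ext}^1_C(K,C)$ in internal degree $0$ as a subquotient of $C^{2n-4}\oplus C^{2n-3}\oplus C^{2n-2}$ using the presentation from Proposition~\ref{prop:K-relations}, and kill it by showing these degrees exceed the top nonzero degree of $C$, which is read off via Poincar\'e duality from the bottom degree of $K$. However, there is a concrete error in your degree bookkeeping: you assert that $D_n$, of degree $2^t-4$, is the generator of $K$ of smallest degree, and hence that $C^d=0$ for $d>N-(2^t-4)=3n-2^t-5$. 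This is false for roughly the lower half of the range: since $\deg A_n=3n-2^t-1$, one has $\deg A_n<\deg D_n$ exactly when $3n<2^{t+1}-3$ (e.g.\ $t=5$, $n=17$: $\deg A_n=18<28=\deg D_n$), and in that regime the top degree of $C$ is $N-(3n-2^t-1)=2^t-8$, which is strictly larger than your claimed bound $3n-2^t-5$. The correct statement is that the top degree of $C$ is $\max(2^t-8,\,3n-2^t-5)$, so you must also verify $2n-4>2^t-8$; this follows from $n>2^{t-1}$, which is exactly the extra check the paper performs. (There is also a harmless arithmetic slip: $3n-2^t-5<2n-4$ holds precisely when $n<2^t+1$, not $n<2^t-1$, but this does not affect the range in question.)

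With that repair --- taking the bottom degree of $K$ to be $\min(3n-2^t-1,\,2^t-4)$ and checking both resulting inequalities $2n-4>3n-2^t-5$ (from $n\leq 2^t-4$) and $2n-4>2^t-8$ (from $n>2^{t-1}$) --- your argument coincides with the paper's proof. Your remark about the care needed in using a presentation rather than a full free resolution is well taken, and your handling of it (Ext${}^1$ sits inside the degree-$0$ part of $\Hom$ out of the relation module, so vanishing of $C$ in the relation degrees suffices) is correct.
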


\begin{proof}
  In the cases $n=2^t-3,\dots,2^t$, $K$ is a cyclic $C$-module by \cite[Theorem A]{BasuChakraborty2020}, and therefore free by the Corollary~\ref{cor:powerful}. This implies the triviality of the Ext-group.

  We can thus focus on the cases $2^{t-1}<n\leq 2^t-4$. In these cases, $K$ is generated by two elements of degrees $3n-2^t-1$ and $2^t-4$, again by \cite[Theorem A]{BasuChakraborty2020} or Proposition~\ref{prop:k3-basis}. The smallest-degree anomalous generator is Poincar\'e dual to the top degree class of $C$. The dimension of $\OGr_3(n)$ being $3n-9$, the top degree of $C$ is therefore the maximum of $2^t-8$ and $3n-2^t-5$.

  On the other hand, $K$ has 3 relations in degrees $2l+2=2n-4,2l+3,2l+4$, by Proposition~\ref{prop:K-relations}. If we can show that the degrees of the relations are always bigger than the top degree of $C$, the Ext-group is trivial for degree reasons, cf. the discussion in Appendix~\ref{sec:ext-basics}. But the assumption $n>2^{t-1}$ implies $2n-4>2^t-4>2^t-8$, and the assumption $n\leq 2^t-4<2^t+1$ implies $2n-4>3n-2^t-5$. So we are done.
\end{proof}

\section{Removing remaining ambiguities}
\label{sec:ambiguities}

Using the splitting of Proposition \ref{prop:ext-vanishing-k3}, we know that ${\rm H}^*(\OGr_3(n),\F_2)$ in the range $2^{t-1}<n\leq 2^t-4$ is a $C$-module generated by lifts $a_n$ and $d_n$ of the elements $A_n$ and $D_n$, and we have computed the $C$-module relations in Theorem \ref{thm:anomalousmodule}. This determines the cohomology ${\rm H}^*(\OGr_k(n),\mathbb{F}_2)\cong C\oplus K$ as a $C$-module. To determine the complete ring structure, the only ambiguities left are the products $a_n^2, d_n^2$ and $a_nd_n$. We prove:

\begin{proposition}\label{prop:products} 
	Let $a_n, d_n\in {\rm H}^*(\OGr_3(n);\F_2)$ be lifts of the elements $A_n,D_n\in {\rm H}^*(\Gr_3(n);\F_2)$ defined in \eqref{eq:ascended_generator} and \eqref{eq:descended_generator}. Then
	\[
	a_n^2=d_n^2=a_nd_n=0.
	\]
\end{proposition}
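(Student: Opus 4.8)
The plan is to show that each of the three products $a_n^2$, $d_n^2$ and $a_nd_n$ lands in degrees where the cohomology is forced to vanish, or is forced into the characteristic subring $C$ where it can then be identified with zero by a further degree count. Recall $\deg a_n = 3n-2^t-1$ and $\deg d_n = 2^t-4$, while $\dim \OGr_3(n) = 3n-9$. First I would dispose of $a_nd_n$: its degree is $3n-2^t-1 + 2^t-4 = 3n-5 > 3n-9$, which exceeds the dimension of $\OGr_3(n)$, so $a_nd_n = 0$ immediately. This is the product that was already noted to be "trivial for degree reasons" in the introduction.

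Next, $d_n^2$ has degree $2(2^t-4) = 2^{t+1}-8$. The key observation is that this degree is even and, I expect, lies strictly above the top degree of the anomalous part $K$ in the relevant range $2^{t-1} < n \le 2^t-4$; equivalently, $2^{t+1}-8$ exceeds $\deg a_n$ and also exceeds the point beyond which, by Poincar\'e duality applied as in Proposition~\ref{prop:pd}, every cohomology class is in $\pi^*C$ (i.e.\ $\delta$ vanishes there). So $\delta(d_n^2) = 0$, meaning $d_n^2 \in \pi^*C$. Then I would pin down $d_n^2$ inside $C$ by a second degree/structure argument: $d_n^2 \in C^{2^{t+1}-8}$, and one checks — using $\dim C = \dim K$ reflected across degree $N=3n-9$ from Proposition~\ref{prop:pd}, together with the relations of $K$ from Proposition~\ref{prop:K-relations} — that $C$ vanishes in degree $2^{t+1}-8$ in this range (the top degree of $C$ being $\max(2^t-8, 3n-2^t-5)$, both of which are $< 2^{t+1}-8$ once $n \le 2^t-4$). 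Hence $d_n^2 = 0$.

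For $a_n^2$, of degree $2(3n-2^t-1) = 6n-2^{t+1}-2$, the same two-step strategy applies but the induction is more delicate, which is why I expect this to be the main obstacle. One first argues $\delta(a_n^2)=0$ so that $a_n^2 \in \pi^*C$, then shows $C$ vanishes in degree $6n-2^{t+1}-2$. The subtlety is that, unlike the $a_nd_n$ case, $6n - 2^{t+1} - 2$ is not always above the dimension of $\OGr_3(n)$, so one genuinely needs the finer information about where $C$ is supported — and here I would run an induction on $n$ parallel to the inductions in Corollaries~\ref{cor:descending-k3} and \ref{cor:ascending-k3} and Lemma~\ref{lem:linear-equations} (the proof sketch in the introduction refers to "a similar induction as the one used for the relations", cf.\ Propositions~\ref{prop:descendedsquare} and \ref{prop:ascendedsquare}). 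Concretely, I would express $a_n^2$ via the recursive description of $A_n$ in \eqref{eq:ascended_generator}, reduce the square to an expression in the $p_\ell$'s and $r_\ell$'s, and use the recursions \eqref{eq:precursion}, \eqref{eq:r_recursion} together with the relations of Lemma~\ref{lem:linear-equations} to show the square reduces modulo $(Q_{n-2},Q_{n-1},Q_n)$ to something in $(w_1)$, hence to $0$ after descending. The base case $n = 2^{t-1}+1$ should be a direct check, and the inductive step should mirror the ascending-relation argument verbatim, with squaring being compatible with the Frobenius-type recursions in characteristic $2$. Collecting the three cases gives \eqref{eq:squares}, completing the proof of Theorem~\ref{thm:main}.
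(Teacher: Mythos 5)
Your treatment of $a_nd_n$ is exactly the paper's (degree $3n-5>3n-9=\dim\OGr_3(n)$), and your final step — that a class of $\pi^*C$ in degree above the top degree of $C$ must vanish, with top degree $\max(2^t-8,3n-2^t-5)$ — is also correct and is precisely the paper's Lemma~\ref{lemma:reduction}. But the middle step, showing that $\de(d_n^2)=0$ and $\de(a_n^2)=0$ in the first place, has a genuine gap. You claim $\de(d_n^2)=0$ because $\deg(d_n^2)=2^{t+1}-8$ lies above the top degree of $K$. That is false: by the Poincar\'e duality of Proposition~\ref{prop:pd}, $K$ is nonzero all the way up to degree $3n-9$ (since $k_{3n-9-i}=c_i$ and $C$ is nonzero in low degrees), and $2^{t+1}-8\leq 3n-9$ for $n$ in the upper part of the range. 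Concretely, for $\OGr_3(12)$ (so $t=4$) one has $\deg(d_n^2)=24$ while $\dim\OGr_3(12)=27$ and $k_{24}=c_3=\dim\langle w_3\rangle\neq 0$; already for $\OGr_3(11)$ the degree $24$ is the top degree, where $k_{24}=c_0=1$. Exceeding $\deg a_n$ is irrelevant, since $K$ is a $C$-module, not just the span of its generators.

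The missing idea is that, although $\de$ is not a ring homomorphism, it commutes with Steenrod squares, and $x^2=\Sq^{|x|}x$; hence $\de(d_n^2)=\Sq^{|d_n|}\de(d_n)=D_n^2$ and likewise for $a_n$. This reduces everything to showing $A_n^2=D_n^2=0$ in ${\rm H}^*(\Gr_3(n);\F_2)$, i.e.\ $A_n^2,D_n^2\in(Q_{n-2},Q_{n-1},Q_n)$ in $W_1$, which is a genuine computation and not a degree count. The paper does this by induction on $n$ using the recursions $D_{n-1}=q_iQ_{n-1}+D_n$ and $A_{n+1}=w_3A_n+r_jQ_n$ (Lemmas~\ref{lem:dn-recursion} and \ref{lem:an-recursion}): squaring these in characteristic $2$ gives $D_{n-1}^2=D_n^2+q_i^2Q_{n-1}^2$ and $A_{n+1}^2=w_3^2A_n^2+r_j^2Q_n^2$, which visibly preserve membership in the relevant ideals. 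Your sketch for $a_n^2$ gestures in this direction, but note also that the base cases are not "direct checks": they are $n=2^t-3$ (descending, from \cite{JovanovicPrvulovic2023}) and $n=2^{t-1}$ (ascending, from \cite{ColovicPrvulovic2023_2t}), i.e.\ imports of the known ring structure at the boundary of the range, again transported through $\de$ via the Steenrod-square identity. Without the $\Sq$-compatibility and the explicit induction, the argument for both squares is incomplete.
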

\begin{proof}
The product $a_nd_n$ vanishes for degree reasons: 
\[\deg(a_nd_n)=(3n-2^t-1)+(2^t-4)=3n-5>3n-9=\dim\OGr_3(n).\]
We will show that $\de(a_n)^2=A_n^2=0$ and $\de(d_n)^2=D_n^2=0$ in $ {\rm H}^*(\Gr_3(n);\F_2)$ in Propositions~\ref{prop:ascendedsquare} and \ref{prop:descendedsquare}. Then we can conclude by the following Lemma \ref{lemma:reduction}. 
\end{proof}

\begin{lemma}\label{lemma:reduction}
	$a_n^2=d_n^2=0$ if and only if $A_n^2=D_n^2=0$.
\end{lemma}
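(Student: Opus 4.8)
The plan is to reduce the vanishing of the products $a_n^2$ and $d_n^2$ in $H = {\rm H}^*(\OGr_3(n);\F_2)$ to the vanishing of the squares $A_n^2$ and $D_n^2$ of the corresponding classes in ${\rm H}^*(\Gr_3(n);\F_2)$, using the $C$-module splitting $H = C\oplus K$ (Proposition~\ref{prop:ext-vanishing-k3}) and the fact that $\de\colon H\to K$ is a $C$-module homomorphism with $\de(a_n)=A_n$, $\de(d_n)=D_n$. The key point is a degree bound: any square $a_n^2$ (or $d_n^2$) lives in a cohomological degree that is already beyond the top degree of the characteristic subring $C$, so if $\de(a_n^2)=A_n^2$ vanishes, then $a_n^2$ lies in $C\se H$ in a degree where $C$ is zero, forcing $a_n^2=0$.

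First I would record the relevant degrees. From Theorem~\ref{thm:main} (or Proposition~\ref{prop:K-relations}) we have $\deg a_n = 3n-2^t-1$ and $\deg d_n = 2^t-4$, so $\deg a_n^2 = 2(3n-2^t-1) = 6n-2^{t+1}-2$ and $\deg d_n^2 = 2(2^t-4) = 2^{t+1}-8$. The manifold $\OGr_3(n)$ has dimension $3n-9$, and as noted in the proof of Proposition~\ref{prop:ext-vanishing-k3}, the top nonzero degree of $C$ is $\max(2^t-8,\, 3n-2^t-5)$. Using the hypothesis $2^{t-1}<n\leq 2^t-4$, I would check the two inequalities $\deg a_n^2 > \max(2^t-8, 3n-2^t-5)$ and $\deg d_n^2 > \max(2^t-8, 3n-2^t-5)$. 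For $d_n^2$: $2^{t+1}-8 > 2^t-8$ is immediate, and $2^{t+1}-8 > 3n-2^t-5 \iff 3\cdot 2^t > 3n+3 \iff 2^t > n+1$, which holds since $n\leq 2^t-4$. For $a_n^2$ a similar elementary check using $n>2^{t-1}$ and $n\leq 2^t-4$ works (in fact one can also note $\deg a_n^2$ may exceed $\dim\OGr_3(n)=3n-9$ outright in part of the range, giving vanishing for free; the remaining subrange is handled by the $C$-degree bound).

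Then the argument is as follows. Write $a_n^2 = c + \xi$ according to the splitting $H = C\oplus K$, with $c\in C$ and $\xi\in K$. Applying $\de$, which kills $C$ and is the identity on $K$, gives $\xi = \de(a_n^2) = \de(a_n)^2 = A_n^2$ (using that $\de$ is a ring homomorphism onto its image, or more precisely that $\de$ is induced by the restriction of the cup product structure — here one uses that $\de$ on $H$ is compatible with multiplication in the sense needed; alternatively one argues directly with $A_n, D_n$ as classes in ${\rm H}^*(\Gr_3(n);\F_2)$ and their images). Once $A_n^2 = 0$ (Propositions~\ref{prop:ascendedsquare} and~\ref{prop:descendedsquare}), we get $\xi = 0$, so $a_n^2 = c\in C$. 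But $\deg c = \deg a_n^2$ exceeds the top degree of $C$ by the computation above, so $c = 0$, hence $a_n^2 = 0$. The identical argument with $d_n$ in place of $a_n$ gives $d_n^2 = 0$. The converse direction ($a_n^2=d_n^2=0 \Rightarrow A_n^2=D_n^2=0$) is immediate since $\de$ is a $C$-module (and multiplicative) map: $A_n^2 = \de(a_n)^2 = \de(a_n^2) = 0$.

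The main subtlety — not really an obstacle but the point requiring care — is justifying that $\de(a_n^2) = \de(a_n)^2$, i.e.\ that the projection $H\to K$ interacts correctly with products of anomalous classes. This is where the identification of $A_n, D_n$ as honest classes in ${\rm H}^*(\Gr_3(n);\F_2)$ (via the Gysin map / the description in Remark~\ref{rmk:explicit}) matters: $\de$ is not a ring homomorphism globally, but the relevant computation only needs that for $x$ with $\de(x)$ in the kernel $K$ of multiplication by $w_1$, the product $x^2$ maps under $\de$ to the square of $\de(x)$ inside ${\rm H}^*(\Gr_3(n);\F_2)$, which follows from the Gysin sequence formalism since $\de$ is (a shift of) the pushforward $\pi_!$ and $\pi^*$ is a ring map. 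Modulo this bookkeeping, the lemma is purely a degree count, so the proof is short.
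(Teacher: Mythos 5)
Your overall strategy is the same as the paper's: one direction is the identity $\de(x^2)=\de(x)^2$, and the other direction is the observation that once $A_n^2=\de(a_n^2)=0$, the class $a_n^2$ lies in $\ker\de=\im\pi^*=C$ in a degree exceeding the top nonzero degree of $C$ (your detour through the splitting $H=C\oplus K$ is equivalent to this). Your degree inequalities are correct and match the paper's.

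The one genuine gap is exactly at the point you flag: the justification of $\de(a_n^2)=\de(a_n)^2$. The reasons you give --- that $\de$ is the pushforward $\pi_!$ and that $\pi^*$ is a ring map --- do not prove it. The transfer of a double cover is not multiplicative, and the best the naive Gysin-sequence bookkeeping gives is
\[
\pi^*\bigl(\pi_!(x^2)-(\pi_!x)^2\bigr)=\bigl(x^2+\tau^*(x^2)\bigr)-(x+\tau^*x)^2=0 \pmod 2,
\]
so that $\pi_!(x^2)-(\pi_!x)^2$ lies in $\ker\pi^*=w_1\cdot{\rm H}^*(\Gr_3(n);\F_2)$. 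Both terms also lie in $K=\ker(w_1)$, so the ambiguity lives in $\ker(w_1)\cap\im(w_1)$, which is in general nonzero; the argument as stated therefore does not pin down $\de(a_n^2)$. The paper closes this with a one-line but essential input: $x^2=\Sq^{|x|}(x)$ and $\de$ commutes with Steenrod squares (stability of Steenrod operations for the transfer/boundary map), whence $\de(x^2)=\Sq^{|x|}\de(x)=\de(x)^2$ since $\de$ preserves degree. Note that this identity is also what your converse direction ($a_n^2=0\Rightarrow A_n^2=0$) rests on, so the gap affects both implications. With that substitution your proof is complete and coincides with the paper's.
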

\begin{proof}
  That boundary morphisms commute with Steenrod squares is the stability of Steenrod operations. 
  For the other direction, it is enough to show that $\deg(A_n^2)$ and $\deg(D_n^2)$ are above the top degree of $C$. As in Proposition~\ref{prop:ext-vanishing-k3}, the top degree of $C$ is $\max\{2^t-8,3n-2^t-5\}$. 
	  In the following lines, we will check that (under the standing assumptions) the degrees of the squares are always bigger than the top degree of $C$, so we are done. The inequality $\deg(D_n^2)=2(2^t-4)=2^{t+1}-8>2^t-8$ is straightforward. Next, $\deg(A_n^2)=2(3n-2^t-1)>3n-2^t-5$ follows from \[3n-2^{t+1}+2^t+3>3\cdot 2^{t-1}-2^{t+1}+2^t+3=2^{t-1}+3>0.\] Similarly, $\deg(A_n^2)=2(3n-2^t-1)>2^t-8$ follows from $3n-2^t-1>2^{t-1}-1>2^{t-1}-4$. Finally
	  \[
	 \deg(D_n^2)= 2(3n-2^t-1)>2(3\cdot 2^{t-1}-2^t-1)=2(2^{t-1}-1)=2^t-2>2^t-8.\qedhere
	  \]
\end{proof}

In the remainder of this section, we complete the proof of Proposition \ref{prop:products} by showing that the squares of the $A_n$ and $D_n$ vanish in ${\rm H}^*(\Gr_3(n);\F_2)$.

\subsection{The descended square}
Recall that $P_i=Q_i+q_i$ and $P_i=w_1p_i$, and that they satisfy the following recursive formulas:
\begin{equation}\label{eq:recallrecursions}
	P_i=w_1q_{i-1}+\sum_{r=1}^k w_rP_{i-r},\qquad
	p_i=q_{i-1}+\sum_{r=1}^kw_rp_{i-r}.
\end{equation}		
To prove $D_n^2=0$ in ${\rm H}^*(\Gr_3(n);\F_2)$, we will use the following recursive identity for $D_n$:
\begin{lemma}
  \label{lem:dn-recursion}
  The $D_n$ satisfy the recursion
  \[D_{n-1}=q_iQ_{n-1}+D_n,\]
  with the usual notation, where $i+n$ is some constant.
\end{lemma}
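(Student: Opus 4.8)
The plan is to prove the recursion $D_{n-1}=q_iQ_{n-1}+D_n$ (where throughout, as in Figure~\ref{fig:notations}, the value $m=2^t-3$ is fixed and $i=m-n$, so that passing from $n$ to $n-1$ replaces $i$ by $i+1$) by a direct computation using the explicit formula \eqref{eq:descended_generator} for $D_n$ together with the recursive identities \eqref{eq:recallrecursions} for the classes $p_l$ and $P_l$. Recall that $D_n=q_ip_n+q_{i+1}p_{n-1}+w_3q_{i-1}p_{n-2}$ and correspondingly $D_{n-1}=q_{i+1}p_{n-1}+q_{i+2}p_{n-2}+w_3q_ip_{n-3}$, where I used that the descended-relation coefficients for $\Gr_3(n-1)$ are obtained from those for $\Gr_3(n)$ by the index shift $i\mapsto i+1$, cf.\ Corollary~\ref{cor:descending-k3}. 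The quantities $q_i$ here are honest polynomials in $\F_2[w_2,w_3]$, so their recursion $q_{i+2}=w_2q_i+w_3q_{i-1}$ is available without any quotient subtleties.

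The key step is to expand $D_{n-1}-D_n$ using the above and simplify. First I would substitute $q_{i+2}=w_2q_i+w_3q_{i-1}$ into the middle term of $D_{n-1}$, giving
\[
D_{n-1}=q_{i+1}p_{n-1}+w_2q_ip_{n-2}+w_3q_{i-1}p_{n-2}+w_3q_ip_{n-3}.
\]
Subtracting $D_n=q_ip_n+q_{i+1}p_{n-1}+w_3q_{i-1}p_{n-2}$ (and working mod~2, so subtraction is addition) cancels the $q_{i+1}p_{n-1}$ and $w_3q_{i-1}p_{n-2}$ terms, leaving
\[
D_{n-1}+D_n=q_i\bigl(p_n+w_2p_{n-2}+w_3p_{n-3}\bigr).
\]
Now I apply the recursion \eqref{eq:recallrecursions} for $p_n$ in the case $k=3$: $p_n=q_{n-1}+w_2p_{n-2}+w_3p_{n-3}$ (using $w_1=0$ on the $W_2$-level, or more precisely the form of \eqref{eq:precursion}). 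Hence $p_n+w_2p_{n-2}+w_3p_{n-3}=q_{n-1}$, wait---one must be careful here since in $W_1$ the recursion \eqref{eq:Precursion}/\eqref{eq:precursion} for $p_i$ has a $w_1q_{i-1}$-type term; but we are computing in ${\rm H}^*(\Gr_3(n);\F_2)=W_1/(Q_{n-2},Q_{n-1},Q_n)$ where $Q_{n-1}=0$, and $Q_{n-1}=P_{n-1}+q_{n-1}$ with $P_{n-1}=w_1p_{n-1}$. The cleanest route is to observe that modulo the relations, $\sum f_lq_l=\sum f_lP_l$ for any relation $\sum f_lq_l=0$ (as recalled in Remark~\ref{rmk:explicit}), and then transport the $q$-side recursion. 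Concretely, $p_n+w_2p_{n-2}+w_3p_{n-3}=q_{n-1}$ holds by \eqref{eq:precursion} with $w_1=0$, since for $k=3$ the sum $\sum_{r=1}^k w_rp_{n-r}=w_2p_{n-2}+w_3p_{n-3}$. Therefore $D_{n-1}+D_n=q_iq_{n-1}$ in $W_1$, and reducing to ${\rm H}^*(\Gr_3(n);\F_2)$ we may replace $q_{n-1}$ by $Q_{n-1}$ since $q_{n-1}=Q_{n-1}+P_{n-1}$ and $P_{n-1}=w_1p_{n-1}$ is... no, that is not zero. The honest statement is $q_{n-1}\equiv Q_{n-1} \pmod{(w_1)}$, so $q_iq_{n-1}\equiv q_iQ_{n-1}$ only modulo $(w_1)$, not in $H$; I should instead keep $D_{n-1}+D_n=q_iq_{n-1}$ and note that in $H=W_1/(Q_{n-2},Q_{n-1},Q_n)$ we have $q_{n-1}=Q_{n-1}-P_{n-1}$, and since $D_n$ lives in $\ker w_1$ while the target recursion is stated in $H$, the term $P_{n-1}$ may survive---so I expect the precise bookkeeping of $P$ versus $q$ in $H$ versus $W_1$ to be the main obstacle, and I would resolve it by carefully tracking whether the intended identity is in $W_1$ or in $H$, most likely the latter, where $Q_{n-1}=0$ forces $q_{n-1}=-P_{n-1}=w_1p_{n-1}$...

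Given the subtlety, the cleanest plan is: work entirely in $W_1$ using the exact recursions \eqref{eq:Precursion} and \eqref{eq:precursion} (which do include the $w_1q_{i-1}$ correction term), carry out the same cancellation as above, and arrive at $D_{n-1}+D_n=q_i\cdot(\text{something})$ where the ``something'' is $q_{n-1}$ up to terms in $(Q_{n-2},Q_{n-1},Q_n)$; then reduce to $H$, where $q_{n-1}$ becomes $Q_{n-1}$ modulo the other relations, which is exactly the claimed $D_{n-1}=q_iQ_{n-1}+D_n$. I expect the main obstacle to be exactly this translation between the polynomial recursion (valid in $W_1$ or $W_2$) and the desired identity in the quotient ring $H$, i.e.\ correctly accounting for the relation $Q_{n-1}=0$ and the definition $P_i=Q_i+q_i$; once that is handled, the computation is a short mod~2 manipulation with the coefficient recursion $q_{i+2}=w_2q_i+w_3q_{i-1}$ and the recursion \eqref{eq:precursion} for the $p_l$.
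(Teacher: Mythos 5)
Your setup and the main cancellation are exactly the paper's: writing $D_{n-1}=q_{i+1}p_{n-1}+q_{i+2}p_{n-2}+w_3q_ip_{n-3}$, expanding $q_{i+2}=w_2q_i+w_3q_{i-1}$, and cancelling to get $D_{n-1}+D_n=q_i\bigl(p_n+w_2p_{n-2}+w_3p_{n-3}\bigr)$ is correct and is precisely how the paper proceeds. The gap is in the last step. You invoke the recursion for $p_n$ with the $w_1$-term dropped, obtaining $p_n+w_2p_{n-2}+w_3p_{n-3}=q_{n-1}$; but $p_n$ is an element of $W_1$ (defined by $w_1p_n=P_n$), and \eqref{eq:precursion} for $k=3$ reads $p_n=q_{n-1}+w_1p_{n-1}+w_2p_{n-2}+w_3p_{n-3}$ \emph{including} the $r=1$ summand. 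You cannot set $w_1=0$ here. Keeping that term gives
\[
p_n+w_2p_{n-2}+w_3p_{n-3}=q_{n-1}+w_1p_{n-1}=q_{n-1}+P_{n-1}=Q_{n-1},
\]
using $P_{n-1}=w_1p_{n-1}$ and $Q_{n-1}=P_{n-1}+q_{n-1}$ from Remark~\ref{rmk:explicit}. So $D_{n-1}+D_n=q_iQ_{n-1}$ holds \emph{exactly in} $W_1$, with no passage to any quotient ring.

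Your proposed repair --- reducing to $H={\rm H}^*(\Gr_3(n);\F_2)$ and arguing that $q_{n-1}$ ``becomes'' $Q_{n-1}$ modulo the relations --- does not work and is the wrong direction. In $H$ one has $Q_{n-1}=0$, so the claimed identity would collapse to $D_{n-1}=D_n$ there, which is not the content of the lemma; moreover $D_{n-1}$ naturally belongs to the presentation of $\Gr_3(n-1)$, whose defining ideal is $(Q_{n-3},Q_{n-2},Q_{n-1})$, a different ideal, so comparing $D_{n-1}$ and $D_n$ only makes sense at the level of $W_1$. Indeed the lemma is used later (in the induction step for Proposition~\ref{prop:descendedsquare}) by squaring both sides and then reducing modulo $(Q_{n-1},Q_{n-2},Q_{n-3})$, which requires the identity to hold on the nose in $W_1$. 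The fix is the one-line observation above; once you retain $w_1p_{n-1}$, all the bookkeeping you were worried about disappears.
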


\begin{proof}
  Recall from~\eqref{eq:descended_generator} that
  \[
  D_n=q_ip_n+q_{i+1}p_{n-1}+w_3q_{i-1}p_{n-2}.
  \]
  Using the recursion on $q_l$ and $w_2p_{n-2}+w_3p_{n-3}=w_1p_{n-1}+q_{n-1}+p_{n}$ -- see \eqref{eq:precursion} -- we have
  \begin{align*}
    D_{n-1}&=q_{i+1}p_{n-1}+q_{i+2}p_{n-2}+w_3q_{i}p_{n-3}\\
    &=q_{i+1}p_{n-1}+(w_2q_i+q_{i-1}w_3)p_{n-2}+w_3q_{i}p_{n-3}\\
    &=\underbrace{q_ip_n+q_{i+1}p_{n-1}+q_{i-1}w_3p_{n-2}}_{D_n}+q_i(\underbrace{w_1p_{n-1}+q_{n-1}}_{Q_{n-1}}).\qedhere
  \end{align*}
\end{proof}
 
We now show that the square of the descended generator is always zero in ${\rm H}^*(\Gr_3(n);\F_2)$.
\begin{proposition}\label{prop:descendedsquare}
	For $2^{t-1}<n\leq 2^{t}-3$,
	\[
	D_n^2\in (Q_n,Q_{n-1},Q_{n-2})
	\]
\end{proposition}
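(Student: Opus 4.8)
The plan is to prove, by descending induction on $n$ starting at $n=2^t-3$, that $D_n^2\in(Q_n,Q_{n-1},Q_{n-2})$ as an ideal in $W_1=\F_2[w_1,w_2,w_3]$ --- equivalently, that $D_n^2=0$ in ${\rm H}^*(\Gr_3(n);\F_2)=W_1/(Q_{n-2},Q_{n-1},Q_n)$. The driving force is the recursion $D_{n-1}=q_iQ_{n-1}+D_n$ of Lemma~\ref{lem:dn-recursion}.

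For the induction step, assume $2^{t-1}<n\le 2^t-3$ and $D_n^2\in(Q_n,Q_{n-1},Q_{n-2})$. Squaring the identity of Lemma~\ref{lem:dn-recursion} (the cross term disappears over $\F_2$) gives $D_{n-1}^2=q_i^2Q_{n-1}^2+D_n^2$. The first summand lies in $(Q_{n-1})$, and the defining recursion $Q_n=w_1Q_{n-1}+w_2Q_{n-2}+w_3Q_{n-3}$ shows $(Q_n,Q_{n-1},Q_{n-2})\subseteq(Q_{n-1},Q_{n-2},Q_{n-3})$, so the inductive hypothesis gives $D_n^2\in(Q_{n-1},Q_{n-2},Q_{n-3})$. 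Hence $D_{n-1}^2\in(Q_{n-1},Q_{n-2},Q_{n-3})$, which is exactly the claim for $n-1$; this covers all $n$ in the stated range once the base case is established.

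The base case is $n=2^t-3$, where $i=0$ and $D_{2^t-3}=p_{2^t-3}$ by~\eqref{eq:descended_generator} (using $q_0=1$ and $q_1=q_{-1}=0$). For $t\le 3$ one has $2(2^t-4)>3(2^t-3)-9=\dim\Gr_3(2^t-3)$, so $p_{2^t-3}^2=0$ for degree reasons; assume $t\ge 4$. The crucial observation is the congruence $p_{2^t-3}\equiv Q_{2^t-4}\pmod{(w_1)}$ in $W_1$. Indeed, combining $P_l=Q_l+q_l$ with the Whitney sum formulas for the $Q_j$ and $q_j$ yields $\sum_l P_l=w_1\,(1+w_1+w_2+w_3)^{-1}(1+w_2+w_3)^{-1}$, hence $p_l=\sum_{a+b=l-1}Q_aq_b$ (with $Q_0=q_0=1$); setting $w_1=0$ gives $p_l|_{w_1=0}=\sum_{a+b=l-1}q_aq_b$, which, since $(\sum_c q_c)^2=(1+w_2^2+w_3^2)^{-1}$ involves only even-degree terms, equals $q_{(l-1)/2}^2$ for odd $l$. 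For $l=2^t-3$ this is $q_{2^{t-1}-2}^2$, and $q_{2^{t-1}-2}^2=q_{2^t-4}=Q_{2^t-4}|_{w_1=0}$ follows from the doubling behaviour of the $q_j$ (a consequence of~\eqref{eq:recursion} and $q_{2^{t-1}-3}=0$; cf.\ \cite[(2.6)]{Korbas2015} and the final lemma of Section~\ref{sec:K-presentation}). Thus $p_{2^t-3}=Q_{2^t-4}+w_1h$ for some $h\in W_1$; multiplying by $w_1$ and using $w_1p_{2^t-3}=P_{2^t-3}=Q_{2^t-3}+q_{2^t-3}=Q_{2^t-3}$ (recall $q_{2^t-3}=0$ by Proposition~\ref{prop:qvanishing}), we obtain $w_1^2h=Q_{2^t-3}+w_1Q_{2^t-4}\in(Q_{2^t-3},Q_{2^t-4})$. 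Consequently
\[
p_{2^t-3}^2=Q_{2^t-4}^2+w_1^2h^2=Q_{2^t-4}^2+(w_1^2h)\cdot h\in(Q_{2^t-5},Q_{2^t-4},Q_{2^t-3}),
\]
which finishes the base case and hence the induction.

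The only genuinely delicate point is the base case: the induction step is a short formal manipulation resting on Lemma~\ref{lem:dn-recursion} and the recursion for the $Q_j$, whereas for $n=2^t-3$ one must exploit the vanishing $q_{2^t-3}=0$ together with the doubling structure of the $q_j$ in order to identify $p_{2^t-3}$ modulo $(w_1)$ with $Q_{2^t-4}$. The square $A_n^2$ will then be treated by the same scheme in Proposition~\ref{prop:ascendedsquare}, with the ascended recursion for the $A_n$ playing the role of Lemma~\ref{lem:dn-recursion}.
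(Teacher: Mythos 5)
Your proof is correct, and the inductive skeleton (downward induction on $n$ driven by Lemma~\ref{lem:dn-recursion}, squaring the recursion and using $Q_n=w_1Q_{n-1}+w_2Q_{n-2}+w_3Q_{n-3}$ to shift the ideal) is exactly the paper's. Where you genuinely diverge is the base case $n=2^t-3$: the paper imports the external computation of ${\rm H}^*(\OGr_3(2^t-3);\F_2)\cong C[d_n]/(d_n^2)$ from Jovanovi\'c--Prvulovi\'c and transports $d_n^2=0$ through $\de$ via stability of Steenrod squares, whereas you give a purely algebraic argument inside $W_1$: the generating-function identity $\sum_l p_l=\bigl(\sum_a Q_a\bigr)\bigl(\sum_b q_b\bigr)$ gives $p_{2^t-3}|_{w_1=0}=q_{2^{t-1}-2}^2=q_{2^t-4}$, whence $p_{2^t-3}=Q_{2^t-4}+w_1h$ with $w_1^2h=Q_{2^t-3}+w_1Q_{2^t-4}$ (using $q_{2^t-3}=0$), and the ideal membership of the square follows. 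This buys independence from \cite{JovanovicPrvulovic2023} for this proposition, which is a real gain. The one step you state rather than prove is $q_{2^{t-1}-2}^2=q_{2^t-4}$; it is true and your parenthetical hint is the right one --- from \eqref{eq:qj} one gets $\sum_j q_j=(1+w_2+w_3)\bigl(\sum_j q_j\bigr)^2$, hence $q_{2m}=q_m^2+w_2q_{m-1}^2$ for all $m$, and $q_{2^{t-1}-3}=0$ (Proposition~\ref{prop:qvanishing}) kills the second term --- but note that the ``final lemma of Section~\ref{sec:K-presentation}'' you cite proves $r_{2^{t-1}-2}=q_{2^t-4}$, which is a different (if analogous) identity, so this small computation should be written out rather than cited.
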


\begin{proof}
  We prove this by downwards induction on $n$. For $n=2^t-3$, the reduction of $D_{n}^2\in W_1$ to \[{\rm H}^*(\Gr_3(2^t-3);\F_2)=W_1/(Q_{2^t-3},Q_{2^t-4}, Q_{2^t-5})\] vanishes by the results of \cite[Theorem 1.1. (c)]{JovanovicPrvulovic2023}. Indeed, ${\rm H}^*(\OGr_3(2^t-3);\F_2)=C[d_n]/d_n^2$   
  and since $\de(d_n)=D_n$ and $\de$ is a $\Sq$-module homomorphism, it follows that \[D_n^2=\Sq^{\deg d_n}\de(d_n)=\de(d_n^2)=0.\] Therefore 
  \[D_{2^t-3}^2\in (Q_{2^t-3},Q_{2^t-4}, Q_{2^t-5}).\]
The induction step is then stated in the following lemma.
\end{proof}

\begin{lemma}
	If $D_n^2\in(Q_n,Q_{n-1},Q_{n-2})$, then $D_{n-1}^2\in (Q_{n-1},Q_{n-2},Q_{n-3})$.
\end{lemma}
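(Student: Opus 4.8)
The strategy is to propagate the vanishing downward in $n$ using the recursion for $D_n$ from Lemma~\ref{lem:dn-recursion}, namely $D_{n-1}=q_iQ_{n-1}+D_n$ (where $i+n$ is the relevant constant, so that decreasing $n$ by one increases $i$ by one). Squaring this identity in $W_1=\F_2[w_1,w_2,w_3]$ and using that the Frobenius is additive in characteristic $2$, we obtain
\[
D_{n-1}^2=q_i^2Q_{n-1}^2+D_n^2.
\]
Now $q_i^2Q_{n-1}^2$ is visibly a multiple of $Q_{n-1}$, hence lies in the ideal $(Q_{n-1},Q_{n-2},Q_{n-3})$. By the inductive hypothesis $D_n^2\in(Q_n,Q_{n-1},Q_{n-2})$, so it suffices to show that this ideal is contained in $(Q_{n-1},Q_{n-2},Q_{n-3})$ modulo the part we can already absorb — and the only term that is not obviously in the smaller ideal is the one involving $Q_n$. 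So the key point is to rewrite $Q_n$ in terms of $Q_{n-1},Q_{n-2},Q_{n-3}$ up to something harmless.

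\textbf{Key steps, in order.} First, write $D_n^2=fQ_n+gQ_{n-1}+hQ_{n-2}$ for some $f,g,h\in W_1$ using the inductive hypothesis. Second, invoke the recursion $Q_n=w_1Q_{n-1}+w_2Q_{n-2}+w_3Q_{n-3}$ for the Giambelli determinants over $W_1$ (valid since $k=3$), which immediately gives $fQ_n=f(w_1Q_{n-1}+w_2Q_{n-2}+w_3Q_{n-3})\in(Q_{n-1},Q_{n-2},Q_{n-3})$. Third, combine: $gQ_{n-1}+hQ_{n-2}$ is already in $(Q_{n-1},Q_{n-2},Q_{n-3})$, and $q_i^2Q_{n-1}^2\in(Q_{n-1})$, so adding everything up yields $D_{n-1}^2\in(Q_{n-1},Q_{n-2},Q_{n-3})$, completing the induction step. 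The base case $n=2^t-3$ was handled in the proof of Proposition~\ref{prop:descendedsquare} itself via the known structure of ${\rm H}^*(\OGr_3(2^t-3);\F_2)$, so this lemma is exactly the inductive step needed.

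\textbf{Main obstacle.} There is essentially no obstacle here once one notices that $Q_n$ reduces modulo $(Q_{n-1},Q_{n-2},Q_{n-3})$ via the defining recursion of the $Q_j$, so the ideal $(Q_n,Q_{n-1},Q_{n-2})$ is actually contained in $(Q_{n-1},Q_{n-2},Q_{n-3})$ as ideals of $W_1$ (not merely after passing to a quotient). The only thing to be a little careful about is the indexing convention in Lemma~\ref{lem:dn-recursion}: there the constant is $i+n$, and in Proposition~\ref{prop:descendedsquare} the relevant pairing is $i=2^t-3-n$, so when we pass from $n$ to $n-1$ the index $i$ becomes $i+1$; this is purely bookkeeping and does not affect the argument, since we never need to know the precise value of $i$, only that $q_{i}\in W_1$. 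Thus the proof is short:
\begin{align*}
D_{n-1}^2&=(q_iQ_{n-1}+D_n)^2=q_i^2Q_{n-1}^2+D_n^2\\
&\in (Q_{n-1})+(Q_n,Q_{n-1},Q_{n-2})\subseteq(Q_{n-1},Q_{n-2},Q_{n-3}),
\end{align*}
where the last inclusion uses $Q_n=w_1Q_{n-1}+w_2Q_{n-2}+w_3Q_{n-3}$.
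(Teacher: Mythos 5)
Your proposal is correct and follows essentially the same route as the paper: square the recursion $D_{n-1}=q_iQ_{n-1}+D_n$, invoke the inductive hypothesis, and use $Q_n=w_1Q_{n-1}+w_2Q_{n-2}+w_3Q_{n-3}$ to absorb the $Q_n$-term into $(Q_{n-1},Q_{n-2},Q_{n-3})$. The paper merely records the resulting coefficients explicitly, which your argument also yields if unwound.
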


\begin{proof}
	Write for some coefficients $d_0,d_1,d_2\in W_1$,
	\[
	D_n^2=d_2Q_{n-2}+d_1Q_{n-1}+d_0Q_n.
	\]
	We would like to show that there exist $d'_0,d'_1,d'_2\in W_1$, such that
	\[
	D_{n-1}^2=d'_2Q_{n-3}+d'_1Q_{n-2}+d'_0Q_{n-1}.
	\]
	By Lemma~\ref{lem:dn-recursion}, the induction hypothesis and the recursion for $Q_n$ analogous to \eqref{eq:recursion}, we have
	\begin{align*}
		D_{n-1}^2&=D_n^2+q_i^2Q_{n-1}^2=d_2Q_{n-2}+(d_1+q_i^2Q_{n-1})Q_{n-1}+d_0Q_n\\
		&=d_0w_3Q_{n-3}+(d_0w_2+d_2)Q_{n-2}+(d_1+q_i^2Q_{n-1}+w_1d_0)Q_{n-1}
	\end{align*}
	which tell us the coefficients $d_i'$.
\end{proof}

\subsection{The ascended square}
Similarly, set $p=2^{t-1}-1$ and $j=n-p=n-2^{t-1}+1$. Recall that the $r_j$ are defined by the recursion
\[r_{j+1}=w_2r_j+w_3^2r_{j-2}.\]
The $A_n$'s also satisfy a recursion:
\begin{lemma}
  \label{lem:an-recursion}
  The $A_n$ satisfy the recursion
  \[A_{n+1}=w_3A_n+r_jQ_n.\]
\end{lemma}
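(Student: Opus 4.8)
The plan is to mimic the proof of the analogous recursion for $D_n$ in Lemma~\ref{lem:dn-recursion}. Recall from \eqref{eq:ascended_generator} that
\[
A_n=r_{j-1}p_n+w_3r_{j-2}p_{n-1}+r_jp_{n-2},
\]
where $j=n-2^{t-1}+1$, so that increasing $n$ by $1$ increases $j$ by $1$. The goal is to expand $A_{n+1}=r_jp_{n+1}+w_3r_{j-1}p_n+r_{j+1}p_{n-1}$ and manipulate it until it matches $w_3A_n+r_jQ_n$ modulo the relations, using the recursions \eqref{eq:precursion} for $p_l$ and \eqref{eq:r_recursion} for $r_l$.

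First I would apply the recursion $p_{n+1}=q_n+w_2p_{n-1}+w_3p_{n-2}$ (this is \eqref{eq:precursion} with $k=3$, noting $w_1p_n$ also appears — more precisely $p_{n+1}=q_n+w_1p_n+w_2p_{n-1}+w_3p_{n-2}$) to the first term $r_jp_{n+1}$. This produces a term $r_jq_n$, a term $r_jw_1p_n$, and terms $r_jw_2p_{n-1}+r_jw_3p_{n-2}$. Next I would use the recursion $r_{j+1}=w_2r_j+w_3^2r_{j-2}$ on the third term $r_{j+1}p_{n-1}$, rewriting it as $w_2r_jp_{n-1}+w_3^2r_{j-2}p_{n-1}$. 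The $w_2r_jp_{n-1}$ contributions from the two sources cancel mod $2$. What should remain, after collecting, is $r_jq_n + r_jw_1p_n + w_3r_{j-1}p_n + w_3r_jp_{n-2} + w_3^2r_{j-2}p_{n-1}$. The last three terms are exactly $w_3(r_{j-1}p_n + w_3r_{j-2}p_{n-1} + r_jp_{n-2}) = w_3 A_n$. The term $r_jw_1p_n$ together with $r_jq_n$ combine via $w_1p_n+q_n$; but $w_1p_n = P_n = Q_n + q_n$, so $r_jw_1p_n + r_jq_n = r_jQ_n$ (the two $r_jq_n$ cancel). Hence $A_{n+1}=w_3A_n+r_jQ_n$ as claimed.

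The only subtlety — and the main thing to get right — is bookkeeping of the $w_1$-term in the recursion \eqref{eq:precursion}: since $k=3$, that recursion reads $p_i=q_{i-1}+w_1p_{i-1}+w_2p_{i-2}+w_3p_{i-3}$, and it is precisely this $w_1p_{i-1}$ term, combined with $q_{i-1}$, that assembles into $P_{i}=Q_i+q_i$ and ultimately produces the $Q_n$ on the right-hand side rather than just a $q_n$. This mirrors exactly the step in the proof of Lemma~\ref{lem:dn-recursion} where $w_1p_{n-1}+q_{n-1}=Q_{n-1}$ was used. I expect no genuine obstacle here; it is a direct computation entirely parallel to the $D_n$ case, and the degrees match: $\deg A_{n+1}=3(n+1)-2^t-1=\deg A_n+3$, while $\deg(w_3A_n)=\deg A_n+3$ and $\deg(r_jQ_n)=2j+n=\deg A_n+3$ as well, using $\deg r_j = 2j$ and the relation $2j-2+n = 3n-2^t-1$ established in Corollary~\ref{cor:ascending-k3}.
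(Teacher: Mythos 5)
Your computation is correct and is essentially the paper's own proof: expand $A_{n+1}=r_jp_{n+1}+w_3r_{j-1}p_n+r_{j+1}p_{n-1}$, apply the recursions \eqref{eq:precursion} and \eqref{eq:r_recursion}, cancel the two $w_2r_jp_{n-1}$ terms, and assemble $w_1p_n+q_n=Q_n$ (the paper just performs this last substitution at the outset by writing $p_{n+1}=Q_n+w_2p_{n-1}+w_3p_{n-2}$). The bookkeeping of the $w_1p_n$ term and the degree check are both handled correctly.
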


\begin{proof}
	By definition \eqref{eq:ascended_generator},
	\[
	A_n=r_{j-1}p_n+w_3r_{j-2}p_{n-1}+r_jp_{n-2},
	\]
  Since $Q_{n}=w_1p_n+q_{n}$ (see Remark \ref{rmk:explicit}), the recursion \eqref{eq:precursion} can be written as $p_{n+1}=Q_n+w_2p_{n-1}+w_3p_{n-2}$, and we have
  \begin{align*}
    A_{n+1}&=r_{j}p_{n+1}+w_3r_{j-1}p_{n}+r_{j+1}p_{n-1}\\
    &=r_j(Q_n+w_2p_{n-1}+w_3p_{n-2})+w_3r_{j-1}p_n+(w_2r_j+w_3^2r_{j-2})p_{n-1}\\
    &=w_3\underbrace{(r_{j-1}p_n+w_3r_{j-2}p_{n-1}+r_jp_{n-2})}_{A_n}+r_jQ_n\qedhere
  \end{align*}
\end{proof}

\begin{proposition}\label{prop:ascendedsquare}
  For $2^{t-1}\leq n<2^{t}-3$,
  \[
  A_n^2\in (Q_n,Q_{n-1},Q_{n-2})
  \]
\end{proposition}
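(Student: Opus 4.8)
The plan is to mirror the proof of Proposition~\ref{prop:descendedsquare} for $D_n$, replacing its downward induction by an upward one driven by the recursion $A_{n+1}=w_3A_n+r_jQ_n$ of Lemma~\ref{lem:an-recursion} (with $j=n-2^{t-1}+1$; note this recursion is only valid once $j\geq 0$, i.e.\ for $n\geq 2^{t-1}-1$, since it rests on the $r$-recursion, which fails to reproduce $r_0=1$ for negative index). Precisely, I would prove by induction on $n$, with base case $n=2^{t-1}$, that $A_n^2$ lies in the ideal $(Q_n,Q_{n-1},Q_{n-2})\subseteq W_1$; this is the same as the assertion $A_n^2=0$ in ${\rm H}^*(\Gr_3(n);\F_2)$.

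\emph{Induction step.} Assume $A_n^2=c_2Q_{n-2}+c_1Q_{n-1}+c_0Q_n$ with $c_i\in W_1$. Squaring the recursion of Lemma~\ref{lem:an-recursion} over $\F_2$ kills the cross term, so $A_{n+1}^2=w_3^2A_n^2+r_j^2Q_n^2$. Here $r_j^2Q_n^2\in(Q_n)\subseteq(Q_{n+1},Q_n,Q_{n-1})$, while in $w_3^2A_n^2$ the summands $w_3^2c_1Q_{n-1}$ and $w_3^2c_0Q_n$ already lie in $(Q_{n-1},Q_n)$. For the last summand $w_3^2c_2Q_{n-2}$ I would use the Giambelli recursion $Q_{n+1}=w_1Q_n+w_2Q_{n-1}+w_3Q_{n-2}$ (the analogue of \eqref{eq:recursion}) to write $w_3^2Q_{n-2}=w_3\bigl(Q_{n+1}+w_1Q_n+w_2Q_{n-1}\bigr)\in(Q_{n+1},Q_n,Q_{n-1})$. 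Hence $A_{n+1}^2\in(Q_{n+1},Q_n,Q_{n-1})$. This step is entirely routine.

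\emph{Base case.} For $n=2^{t-1}$ set $t'=t-1$, so that $\OGr_3(2^{t-1})=\OGr_3(2^{t'})$ is one of the Grassmannians whose mod~$2$ cohomology ring has been determined in \cite{BasuChakraborty2020,ColovicPrvulovic2023_2t}. Since $q_{2^{t-1}-3}=0$ by Proposition~\ref{prop:qvanishing}, the descending relation at $n=2^{t-1}$ lies in the image of the Koszul boundary $d_2$ (Section~\ref{sec:koszulboundary}), so $D_{2^{t-1}}=0$ in $K_3(2^{t-1})$ and $A_{2^{t-1}}$ alone generates $K_3(2^{t-1})$; being cyclic, $K_3(2^{t-1})$ is free of rank one over $C$ by Corollary~\ref{cor:powerful}, and the extension \eqref{eq:w1ses} splits as ${\rm H}^*(\OGr_3(2^{t-1});\F_2)\cong C\oplus K_3(2^{t-1})$. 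A degree count ($\deg A_{2^{t-1}}=2^{t-1}-1$, which by Poincar\'e duality is the bottom degree of $K_3(2^{t-1})$) shows $A_{2^{t-1}}$ is a $C$-module generator of $K_3(2^{t-1})$, hence $A_{2^{t-1}}=\de(a)$ for a generator $a$ of the free $C$-summand; the known ring structure gives $a^2=0$. Since $\de$ commutes with Steenrod squares and preserves degree (as in the proof of Lemma~\ref{lemma:reduction}), $A_{2^{t-1}}^2=\Sq^{2^{t-1}-1}\de(a)=\de(\Sq^{2^{t-1}-1}a)=\de(a^2)=0$ in ${\rm H}^*(\Gr_3(2^{t-1});\F_2)$, i.e.\ $A_{2^{t-1}}^2\in(Q_{2^{t-1}},Q_{2^{t-1}-1},Q_{2^{t-1}-2})$, as required. (One may equally base the induction at $n=2^{t-1}-1$, where $A_{2^{t-1}-1}=p_{2^{t-1}-3}$ is the anomalous generator of $K_3(2^{t-1}-1)$ and $\OGr_3(2^{t-1}-1)$ is again covered by \cite{BasuChakraborty2020}, and then apply the induction step once, using $w_3^2Q_{2^{t-1}-3}\in(Q_{2^{t-1}},Q_{2^{t-1}-1},Q_{2^{t-1}-2})$.)

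The main obstacle is precisely the base case: it is not a formal consequence of the earlier sections but genuinely requires the known description of ${\rm H}^*(\OGr_3(2^{t-1});\F_2)$ — in particular that its anomalous class squares to zero — together with the identification of $A_{2^{t-1}}$ with that anomalous generator. This plays the same role for $A_n$ that the appeal to \cite{JovanovicPrvulovic2023} plays for $D_n$ in Proposition~\ref{prop:descendedsquare}. Once the base case is granted, the upward induction via Lemma~\ref{lem:an-recursion} finishes the proof with no further difficulty, and combined with Proposition~\ref{prop:descendedsquare} and Lemma~\ref{lemma:reduction} it yields Proposition~\ref{prop:products}.
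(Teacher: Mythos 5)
Your proof is correct and follows essentially the same route as the paper: an upward induction starting at $n=2^{t-1}$, with the base case settled by the known ring structure of ${\rm H}^*(\OGr_3(2^{t-1});\F_2)$ from \cite{ColovicPrvulovic2023_2t} together with the compatibility of $\de$ with Steenrod squares, and the induction step obtained by squaring the recursion of Lemma~\ref{lem:an-recursion} and rewriting $w_3^2Q_{n-2}$ via the Giambelli recursion $Q_{n+1}=w_1Q_n+w_2Q_{n-1}+w_3Q_{n-2}$. Your base case is in fact spelled out in more detail than the paper's (which simply cites the reference and the argument of Proposition~\ref{prop:descendedsquare}), but the content is identical.
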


\begin{proof}
  We prove this by induction on $n$. For $n=2^{t-1}$, the reduction of $A_{n}^2\in W_1$ to \[{\rm H}^*(\Gr_3(2^{t-1});\F_2)=W_1/(Q_{2^{t-1}},Q_{2^{t-1}-1}, Q_{2^{t-1}-2})\] vanishes by the results of \cite[Theorem 1.1.]{ColovicPrvulovic2023_2t} and the same arguments as in Proposition \ref{prop:descendedsquare}. 
  The induction step is contained in the following lemma.
\end{proof}

\begin{lemma}
  If $A_n^2\in(Q_n,Q_{n-1},Q_{n-2})$, then $A_{n+1}^2\in (Q_{n+1},Q_{n},Q_{n-1})$.
\end{lemma}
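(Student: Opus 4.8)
The plan is to mimic exactly the structure of the proof of the descended-square induction step (the lemma following Proposition~\ref{prop:descendedsquare}), using the ascending recursion $A_{n+1}=w_3A_n+r_jQ_n$ from Lemma~\ref{lem:an-recursion} in place of $D_{n-1}=q_iQ_{n-1}+D_n$. So first I would write, for some coefficients $a_0,a_1,a_2\in W_1$,
\[
A_n^2=a_2Q_{n-2}+a_1Q_{n-1}+a_0Q_n,
\]
which exists by the induction hypothesis. Squaring the recursion (using that squaring is additive mod~2) gives
\[
A_{n+1}^2=w_3^2A_n^2+r_j^2Q_n^2 = w_3^2a_2Q_{n-2}+w_3^2a_1Q_{n-1}+(w_3^2a_0+r_j^2Q_n)Q_n.
\]
The only term not manifestly in $(Q_{n+1},Q_n,Q_{n-1})$ is $w_3^2a_2Q_{n-2}$, so the task reduces to expressing $w_3^2Q_{n-2}$ modulo $(Q_{n+1},Q_n,Q_{n-1})$.

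The key computation is to use the recursion for the $Q_j$ analogous to \eqref{eq:recursion}, namely $Q_{n+1}=w_1Q_n+w_2Q_{n-1}+w_3Q_{n-2}$, which rearranges (over $\F_2$) to $w_3Q_{n-2}=Q_{n+1}+w_1Q_n+w_2Q_{n-1}$. Multiplying by $w_3$,
\[
w_3^2Q_{n-2}=w_3Q_{n+1}+w_1w_3Q_n+w_2w_3Q_{n-1}.
\]
Substituting this into the expression for $A_{n+1}^2$ collects everything into $(Q_{n+1},Q_n,Q_{n-1})$:
\[
A_{n+1}^2=a_2w_3Q_{n+1}+(a_2w_1w_3+w_3^2a_0+r_j^2Q_n)Q_n+(a_2w_2w_3+w_3^2a_1)Q_{n-1},
\]
which reads off the coefficients $a_i'$ and proves $A_{n+1}^2\in(Q_{n+1},Q_n,Q_{n-1})$.

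I do not anticipate a genuine obstacle here: the argument is purely formal once Lemma~\ref{lem:an-recursion} and the $Q$-recursion are in hand, exactly parallel to the descended case. The one point to be careful about is the index bookkeeping — in Lemma~\ref{lem:an-recursion} the parameter $j=n-2^{t-1}+1$ shifts with $n$, but since $r_j$ only ever enters through coefficients that are absorbed into the ideal membership, its precise value is irrelevant for this step. (The base case $n=2^{t-1}$ is already handled in Proposition~\ref{prop:ascendedsquare} via \cite{ColovicPrvulovic2023_2t}, so only this induction step remains.)

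\begin{proof}
	Write $A_n^2=a_2Q_{n-2}+a_1Q_{n-1}+a_0Q_n$ for suitable $a_0,a_1,a_2\in W_1$, using the induction hypothesis. By Lemma~\ref{lem:an-recursion} and since squaring is additive over $\F_2$,
	\[
	A_{n+1}^2=w_3^2A_n^2+r_j^2Q_n^2=w_3^2a_2Q_{n-2}+w_3^2a_1Q_{n-1}+(w_3^2a_0+r_j^2Q_n)Q_n.
	\]
	The $Q_j$ satisfy the recursion $Q_{n+1}=w_1Q_n+w_2Q_{n-1}+w_3Q_{n-2}$ analogous to \eqref{eq:recursion}, hence $w_3Q_{n-2}=Q_{n+1}+w_1Q_n+w_2Q_{n-1}$ and therefore
	\[
	w_3^2Q_{n-2}=w_3Q_{n+1}+w_1w_3Q_n+w_2w_3Q_{n-1}.
	\]
	Substituting this into the expression for $A_{n+1}^2$ gives
	\[
	A_{n+1}^2=a_2w_3Q_{n+1}+(a_2w_1w_3+w_3^2a_0+r_j^2Q_n)Q_n+(a_2w_2w_3+w_3^2a_1)Q_{n-1},
	\]
	so $A_{n+1}^2\in(Q_{n+1},Q_n,Q_{n-1})$.
\end{proof}
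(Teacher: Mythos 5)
Your proof is correct and follows essentially the same route as the paper: square the recursion $A_{n+1}=w_3A_n+r_jQ_n$ over $\F_2$, apply the induction hypothesis, and use $Q_{n+1}=w_1Q_n+w_2Q_{n-1}+w_3Q_{n-2}$ to absorb the $Q_{n-2}$ term. The final coefficients you obtain agree with those in the paper's computation.
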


\begin{proof}
  Assume that in $W_1$ we can write
  \[
  A_n^2=a_2Q_{n-2}+a_1Q_{n-1}+a_0Q_n.
  \]
  Then we want to show that increasing $n$ and $j$, there exist $a'_0,a'_1,a'_2$, such that
  \[
  A_{n+1}^2=a'_2Q_{n-1}+a'_1Q_{n}+a'_0Q_{n+1}.
  \]
  By Lemma~\ref{lem:an-recursion}, $Q_{n+1}=w_1Q_{n}+w_2Q_{n-1}+w_3Q_{n-2}$ and the induction hypothesis:
  \begin{align*}
    A_{n+1}^2&=w_3^2A_n^2+(r_jQ_n)^2=w_3^2(a_2Q_{n-2}+a_1Q_{n-1}+a_0Q_n)+(r_jQ_n)^2\\
    &=w_3a_2(Q_{n+1}+w_1Q_n+w_2Q_{n-1})+w_3^2(a_1Q_{n-1}+a_0Q_n)+(r_jQ_n)^2\\
    &=(w_2w_3a_2+w_3^2a_1)Q_{n-1}+\big(r_j^2Q_n+w_3^2a_0+w_1w_3a_2\big)Q_n+(w_3a_2)Q_{n+1}
  \end{align*}
  which tell us the coefficients $a_i'$.
\end{proof}
This concludes the proof of Proposition \ref{prop:products} and thus the proof of Theorem \ref{thm:main}.

\section{Discussion of the \texorpdfstring{$k>3$}{k>3} cases}
\label{sec:beyondk3}

In this final section, we want to outline some of the issues that arise when we go beyond the $k=3$ case. Some of these issues are well-known, but some haven't been noticed because the module structure over the characteristic subring hasn't been investigated much.

\subsection{Ascending and descending relations}

The technique of ascending and descending relations works rather generally, as we discussed in Section~\ref{sec:syzygies}.  However, starting from $k=4$, there are many further syzygies besides the ones obtained from Theorem \ref{thm:twopower_syzygies}. For now, it is not clear what to expect. As a first glance, we can use the \verb!Macaulay2! code in the appendix to check the degrees of generators of the anomalous module $K$ for $k=5$, starting with $n=10$:

\begin{center}
\begin{tabular}{|l|l||l|l||l|l||l|l|}
  \hline
  $n$ & degrees & $n$ & degrees & $n$ & degrees & $n$ & degrees\\
  \hline
  10 & 11, 13 &   16 & 15 &  22 & 31, 39, 40, 41, 42, 45&   28 & 31, 48, 51\\
  11 & 14, 15, 16 &   17 & 20, 24&   23 & 31, 40, 42, 42, 50&  29 & 31, 48\\
  12 & 15, 16, 19 &   18 & 25, 27, 29&   24 & 31, 40, 42, 47, 55&  30 & 31, 53\\
  13 & 15, 16 &  19 & 30, 30, 31, 32&   25 & 31, 40, 42&  31 & 31, 54 \\
  14 & 15, 21 &  20 & 31, 33, 35, 35, 40&   26 & 31, 43, 45& 32 & 31 \\
  15 & 15, 22&   21 & 31, 36, 38, 40, 40&   27 & 31, 46, 48& 33 & 36, 56\\
  \hline
\end{tabular}
\end{center}

We make some observations:
\begin{itemize}
\item 
  In the two cases $n=16$ and $n=32$, the module $K$ is free of rank 1 on the generator described in Theorem \ref{thm:charrank} (which subsequently implies that ${\rm H}^*(\OGr_5(2^t),\mathbb{F}_2)$ is a free $C$-module of rank 2 for $t=4$ and $t=5$).
\item These generators provide generators for other values of $n$ as predicted by the ascending and descending relations. Between $n=16\stb 24$, the ascending generators live in the following degrees: $(15,20,25\stb  55)$. The situation is similar as the one described for $k=3$: the ascended relation for $n=25$ in degree 60 is in the image of the Koszul boundary, and therefore represents 0, which is why it is not visible in this table. Similarly, we have descending generators for $n=32,31\stb 18$ in degree 31. 
  However, these generators describe only a small portion of all the generators of the anomalous module. 
\end{itemize}

We can make a similar table for $k=6$, recording the degrees of the generators of $K$ for $n=12$ to $n=21$: 
\begin{center}
\begin{tabular}{|l|l||l|l|}
  \hline
  $n$ & degrees & $n$ & degrees\\
  \hline
  12 & 14, 15, 16  &   17 & 21, 26, 28 \\
  13 & 15, 16 &   18 & 27, 28, 30, 32, 36 \\
  14 & 15, 16 &   19 & 31, 33, 34, 34, 36, 38, 38, 40, 42 \\
  15 & 15, 22 &   20 & 31, 38, 38, 38, 39, 40, 40, 40, 42, 42\\
  16 & 15, 22 &   21 & 31, 40, 42, 42, 44, 44, 44, 45, 46, 46\\
  \hline
  \end{tabular}
\end{center}
The previous phenomenon of a single generator for the case $n=2^t$ seems to disappear beyond $k=5$. The number of generators seems to grow. Some stabilization patterns (both for varying $n$ with fixed $k$ and with varying $k$) are discernible, but the rules of the game seem unclear for now. Nevertheless, the prevalence of one generator in degree $2^t-1$ seems to persist and the picture supports Conjecture~\ref{new-amazing-conjecture}. 

The ascending relations would move $k$ steps each time. One family of such generators for $k=5$ is visible, starting with the degree 20 generator for $n=17$, another one for $k=6$ starting with degree 21 in $n=17$. But most of the degrees do not seem to follow easy patterns compatible with ascending and descending relations.

\subsection{The kernel of the differential}

For our results in the case $\OGr_3(n)$, one of the key steps was that the kernel of $d_1$ in the Koszul complex was a free $W_2$-module, cf.~Proposition~\ref{prop:k3-basis}. For $k\geq 4$, it is no longer the case that $\ker(d_1)$ is a free $W_2$-module, but we can compute a resolution for $\ker(d_1)$ as $W_2$-module using the \verb!Macaulay2! code from the appendix, simply by running the line (after specifying $k$ and $n$):
\begin{verbatim}
resolution kernel kosz(k,n).dd_1
\end{verbatim}

Again, we can record a couple of observations:
\begin{itemize}
\item Computing this for a number of examples with $k=4,5,6$, suggests that in general $\ker(d_1)$ has a free resolution of length $k-2$. 
\item The ranks of the free modules in the resolution seem to be fairly complicated for $k>4$, but for $k=4$, most of the time, the resolution has the form $0\to W_2^{\oplus n}\to W_2^{\oplus(n+3)}\to\ker(d_1)\to 0$ for $n=1,2,3$.
\end{itemize}

It seems conceivable that the techniques of Section~\ref{sec:K-presentation} could possibly be adapted to provide a general formula for the resolution of $\ker(d_1)$ in the Koszul complex, as a $W_2$-module. To get an idea of what we could possibly expect, we compute the degrees of generators for $\ker(d_1)$ using \verb!presentation kernel kosz(4,n).dd_1!, for $n=17,\dots,32$:

\begin{center}
\begin{tabular}{|l|l||l|l|}
  \hline
  $n$ & degrees & $n$ & degrees\\
  \hline
  17 & 17, 29, 30, 31  &   25 & 29, 41, 46, 47, 49 \\
  18 & 21, 29, 31, 33, 34 &   26 & 29, 45, 47, 49, 50 \\
  19 & 25, 29, 33, 34, 35 &   27 & 29, 49, 49, 50, 51 \\
  20 & 29, 29, 33, 37, 38, 39 &   28 & 29, 49, 53, 54, 55\\
  21 & 29, 33, 37, 37, 38, 39 &   29 & 29, 53, 54, 55 \\
  22 & 29, 37, 37, 39, 41, 42 & 30 & 29, 55, 57, 57\\
  23 & 29, 37, 41, 42, 43 & 31 & 29, 58, 59, 61\\
  24 & 29, 37, 45, 46, 47 & 32 & 29, 61, 62, 63\\
  \hline
  \end{tabular}
\end{center}

Compared to the previous table, here we consider $\ker(d_1)$ instead of $K$. We also record here the degrees of elements in the Koszul complex, where in the previous subsection, we considered the degrees of generators of $K$. These two things differ by a shift of 1, i.e., the degrees for $K$ are the Koszul degrees minus 1, see the discussion in Section~\ref{sec:koszul-homology}.

By Corollary~\ref{cor:descending_4}, the descended relation for this stretch has degree 29, and that is prominently visible. By Corollary~\ref{cor:ascending-k4}, the ascended relation for given $n$ would have degree $(d-3)+4i$ for $d=16$ and $i=n-d$. The sequence starts with degree 17 for $n=17$, degree 21 for $n=18$, and so on. The stretch, however, ends prematurely at $n=25$, as there is no degree 53 generator for $\ker(d_1)$ in the case $n=26$.

However, another phenomenon is observable. The last three degrees of generators of $\ker(d_1)$ appearing in the above table are always degrees appearing in degree 2 of the Koszul complex, making it likely that these are coming from the image of $d_2$. But then there are some additional degrees not appearing from ascended/descended relations or the Koszul complex: degree 33 for $n=20$, one of the degree 37 relations for $n=21,\dots,24$, degree 41 for $n=25$, degree 45 for $n=26$, degree 49 for $n=27,28$. Currently there is no explanation for the appearance of these relations. 

\subsection{The presentation of $K$}

After discussing the presentation or the resolution of $\ker(d_1)$ in the Koszul complex, we now come to the presentation of $K$ as a $C$-module for the case $k=4$. The following table collects the degrees of the generators of $K$ for $\OGr_4(n)$ with $n=17,\dots,28$. 

\begin{center}
\begin{tabular}{|l|l||l|l||l|l|}
  \hline
  $n$ & degrees & $n$ & degrees & $n$& degrees \\
  \hline
  17 & 17  &  23 & 29, 37 &29 & 29\\
  18 & 21, 29 & 24 & 29, 37 & 30 & 29 \\
  19 & 25, 29 &   25 & 29, 41 & 31 & 29\\ 
  20 & 29, 29, 33 &  26 & 29, 45 & 32 & 29\\  
  21 & 29, 33, 37 &   27 & 29, 49 & & \\
  22 & 29, 37, 37 & 28 & 29, 49 & &\\
  \hline
  \end{tabular}
\end{center}

Again, we can make some observations:
\begin{itemize}
\item The generators whose degree we identified as appearing in the degree 2 part of the Koszul complex have vanished. This supports the previously formulated assumption that these generators of $\ker(d_1)$ are also already in the image of $d_2$.
\item Contrary to the case $k=3$, not all relations are ascended or descended from the relations $q_{2^t-3}=0$. There is an ascending relation between $n=17\stb  22$ and the descending relation persists between $n=18\stb 32$. There is a new relation appearing between $n=20\stb 28$, which also follows an ascending pattern, but this pattern is broken; sometimes the relation stays in the same degree, possibly due to the vanishing of some coefficients (to compare with $k=3$, cf.\ Remark \ref{rmk:nonvanishing_r_coefficients}).
\end{itemize}

We also include one more experiment concerning the degrees of the relations in the presentation of $K$. We check \verb!degrees relations prune K! where \verb!K! has been constructed as anomalous module from the Koszul complex. The following table collects the degrees of relations in the presentation for $K$, for $\OGr_4(n)$ with $n=17,\dots,28$. 

\begin{center}
\begin{tabular}{|l|l||l|l|}
  \hline
  $n$ & degrees & $n$ & degrees\\
  \hline
  17 & ------  &  23 & 43, 44, 45 \\
  18 & 32, 33, 35 & 24 & 43, 44, 45 \\
  19 & 35, 36, 37 &   25 & 45, 47, 48 \\ 
  20 & 35, 36, 37, 41, 45, 45 &  26 & 48, 49, 51\\  
  21 & 39, 40, 41, 41, 45, 49 &   27 & 51, 52, 53 \\
  22 & 40, 41, 43, 45, 45, 53 & 28 & 51, 52, 53\\
  \hline
  \end{tabular}
\end{center}

To state the observation here, we note that the Koszul complex for $k=4$ always has the form
\[
0\to W_2\to W_2^{\oplus 4}\to W_2^{\oplus 6}\xrightarrow{d_2} W_2^{\oplus 4}\xrightarrow{d_1} W_2.
\]
The middle degree always has six generators. The observation we record here is that three of those appeared in the presentation of $\ker(d_1)$ before, and the other three appear now in the above table, as the first three degrees.

As a final observation, checking \verb!resolution prune HH_1 kosz(4,n)! suggests that it is possible to get a fairly reasonably-looking and small free resolution of $K$ as a $W_2$-module by modifying the Koszul complex with the short free resolution of $\ker(d_1)$.

The study of the presentation of $K$ as $C$-module for $\OGr_4(n)$ will be subject of future study.

\subsection{Nontrivial Ext groups}

We now come to one of the bigger problems when going beyond the $k=3$ case. What simplified the presentation of ${\rm H}^*(\OGr_3(n),\mathbb{F}_2)$ as a $C$-module significantly, was the triviality of the Ext-group as discussed in Proposition~\ref{prop:ext-vanishing-k3}. This is no longer true for $k\geq 4$, and we will discuss a couple of examples where the Ext-group is nontrivial below, making again use of the \verb!Macaulay2!-code from the appendix.

Above, we made an observation about getting a resolution of $K$ as a $W_2$-module from a modification of the Koszul complex by the free resolution of $\ker(d_1)$. This could be a more conceptual way to determine the Ext-group $\Ext_C^1(K,C)$ for $\OGr_4(n)$ generally. But even with that, it remains a significant challenge to actually determine the class of the extension
\[
\xymatrix@R-2pc{
0\ar[r]& C\ar[r]&{\rm H}^*(\OGr_4(n),\mathbb{F}_2)\ar[r]& K\ar[r]& 0
}
\]
as an element in $\Ext_C^1(K,C)$ (or even the triviality or nontriviality of this element). We also hope to return to this question in future investigations. Lifts of generators and relations of $K$ to integral cohomology could potentially prove helpful.


In the following, we list the examples of non-trivial Ext-groups for $k=4,5,6$ and small $n$. Here, ``small'' is essentially determined by patience vs running time of the \verb!Macaulay2!-computation. More computational power or patience can easily extend the list.
\begin{itemize}
  \item For $k=4$ and $n\leq 36$, the following examples have non-trivial Ext-group: $n=18$ of rank 1, $n=24$ of rank 2, $n=25$ of rank 1, $n=34$ of rank 1, $n=35$ of rank 2, $n=36$ of rank 4.
  \item For $k=5$ and $n\leq 21$, the following examples have nontrivial Ext-group: $n=13$ of rank 1, $n=14$ of rank 1, $n=15$ of  rank 1, $n=17$ of rank 3, and $n=18$ of rank 5.
  \item For $k=6$, and $n\leq 18$, the following examples have nontrivial Ext-group: $n=12$ of rank 1, $n=13$ of rank 1, $n=14$ of rank 1, $n=15$ of rank 1, $n=16$ of rank 1, $n=17$ of rank 10, and $n=18$ of rank 26.
\end{itemize}

For examples that fall within the scope $(k,n)$ listed above but have trivial Ext-group, i.e., are not mentioned in the above list, a presentation of ${\rm H}^*(\OGr_k(n),\mathbb{F}_2)$ can be obtained using our \verb!Macaulay2!-code: simply get a presentation of $K$ as $C$-module, and then ${\rm H}^*(\OGr_k(n),\mathbb{F}_2)\cong C\oplus K$ as $C$-module. Note, however, that even if the Ext-group is non-trivial, it could still be possible that the cohomology of $\OGr_k(n)$ splits as $C\oplus K$; for now, we cannot make any more definite statements on nontriviality of the Ext-class. 

To conclude, we discuss what the Ext-group calculation by hand could look like in two specific cases:

\begin{example}
We consider the case $\OGr_4(18)$ in which there is a nontrivial Ext-group. The module $K=\ker w_1$ is generated by two elements $a_{20}$ and $a_{28}$ with three relations
\[
w_2^4w_3a_{20}+w_3a_{28}, \qquad (w_2^6+w_3^4+w_2^4w_4)a_{20}+w_4a_{28}, \qquad w_2^5w_4a_{20}+(w_3^2+w_2w_4)a_{28}
\]
There are six relations between relations in degrees 35, 37, 38, 43, 45 and 46, where $C^{35}=0$ and all of the remaining degrees are above the top degree of $C$. In particular, the Ext-group is the quotient of the differential $d_0\colon C^{20}\times C^{28}\to C^{31}\times C^{32}\times C^{34}$ given by the above relations. Note that the $\mathbb{F}_2$-dimensions of $C^{31}$ and $C^{34}$ are one and the $\mathbb{F}_2$-dimension of $C^{32}$ is two, i.e., the target of $d_0$ has $\mathbb{F}_2$-dimension~$4$. Using the \verb!diffrank!-function from the appendix, we can compute that the differential has rank 3, with a basis for the image given by
\begin{align*}
  d_0(0,w_2^3w_3^6w_4)&=(0,w_2w_3^2w_4^6,0),\\
  d_0(0,w_2^3w_3^2w_4^4)&=(0,w_2^2w_4^7,w_3^2w_4^7), \\
  d_0(0,w_4^7)&=(w_3w_4^7,0,w_3^2w_4^7).
\end{align*}
 We see that by adding suitable degree 28 elements, we can always achieve that the relations in degrees 31 and 32 are trivial. In such a normal form, the extension is determined by the relation in degree 34.  For example, one representative of the non-trivial extension of $K$ by $C$ in this case is the $C$-module generated by $\alpha_0,\alpha_{20}$ and $\alpha_{28}$ subject to the relations
\[
w_2^4w_3\alpha_{20}+w_3\alpha_{28}=(w_2^6+w_3^4+w_2^4w_4)\alpha_{20}+w_4\alpha_{28}=0, w_2^5w_4\alpha_{20}+(w_3^2+w_2w_4)\alpha_{28}=w_3^2w_4^7\alpha_0.
\]
\end{example}

\begin{example}
  We consider the case $\OGr_6(12)$ with a non-trivial Ext-group. The module $K=\ker w_1$ is generated by three elements $a_{14}$, $a_{15}$ and $a_{16}$ with six relations
\[
w_3a_{14}, \quad w_5a_{14}+w_3a_{16}, \quad w_2^2a_{15}+w_3a_{16},
\]
\[ (w_2w_4+w_6)a_{14}+(w_2w_3+w_5)a_{15}+w_4a_{16}, \quad w_2^2a_{16}, \quad w_3^2a_{15}+(w_2w_3+w_5)a_{16}.
\]
There are 13 relations between relations in degrees between 21 and 27. Of these, only the two relations
\[
w_2^2R_{17}, \qquad w_5R_{17}+w_3(R_{19,1}+R_{19,2})
\]
in degree 21 and 22 matter, all the others are above the top degree of $C$ which is 22. The differential $d_1$ has rank 1, and the differential $d_0$ has rank 7, and the dimension of the space of 1-cocycles is 9. The Ext-group therefore is a 1-dimensional $\mathbb{F}_2$-vector space.
\end{example}

\appendix
\section{Computing Koszul resolutions, presentations and Ext-groups with Macaulay2}

In the appendix, we include some \verb!Macaulay2! code that allows to compute Koszul complexes and presentations for the anomalous module $K$ as well as the relevant Ext-groups for the oriented Grassmannian computations. We briefly indicate what the code is doing, how it is used and how to compute one of the examples discussed in Section~\ref{sec:beyondk3}.

We start off with a couple of lines containing the preliminary definitions of Giambelli determinants. The function \verb!giambrow! produces the rows of the matrix $Q_j$ in \eqref{eq:giambelli} starting with a number of zeroes \verb!zs! and the list of Stiefel--Whitney classes \verb!ws!. Then \verb!giambmx! puts together the matrix, and \verb!giambdet! returns the relevant Giambelli determinant.

\begin{verbatim}
zs = (j) -> (return for i from 1 to j list 0)
ws = (j) -> (return {1, 0} | for i from 2 to j list w_i)

giambrow = (d) -> (return zs(d-1) | ws(d))

giambmx = (d) -> (
return for i from 1 to d list take(giambrow(d), {d - i + 1, 2 * d - i})
)

giambdet = (d) -> (return determinant(matrix(giambmx(d))))
\end{verbatim}

Next, we have a couple lines to construct the Koszul complex for the ideal $(q_{n-k+1},\dots,q_n)$ in $W_2$. The function \verb!q(k,j)! encodes the recursive definition of $q_j$ in $W_2=\mathbb{F}_2[w_2,\dots,w_k]$ from \eqref{eq:recursion}, and then \verb!kosz! constructs the Koszul complex (after turning the $q_i$ into a matrix to be used by the \verb!Macaulay2! function constructing the Koszul complex). Note that \verb!kosz(k,l)! constructs the Koszul complex relevant for $\OGr(k,k+l)$.

\begin{verbatim}
q = (k, j) -> (
  if j == 0 then return 1;
  if j <= k then return giambdet(j);
  return sum(2..k, i -> q(k, j-i) * w_(i)))

kosz = (k, l) -> (
  R = GF(2)[w_2..w_k, Degrees => {2..k}];
  f=matrix{for i from l+1 to k+l list q(k, i)};
  C = koszul f;
  return C)
\end{verbatim}

After this, the Koszul homology can be accessed. As we discussed in Section~\ref{sec:koszul-homology}, the characteristic subring and the anomalous module appear as 0th and 1st Koszul homology, respectively. Note, however, that \verb!HH_i cplx! would return the $i$-th homology of the complex as a $W_2$-module. The following lines then do some conversion: \verb!charsubring! takes the presentation of \verb!HH_0 cplx! as a $W_2$-module and uses it to actually construct $C$ as a quotient ring of $W_2$. The function \verb!anomalous! converts the description of \verb!HH_1 cplx! as $W_2$-module to a description as $C$-module. Using \verb!prune! everywhere helps cut down the complexity of the resulting presentations to human-readable size and form.

\begin{verbatim}
charsubring = (cplx) -> (
    C = prune HH_0 cplx;
    I = ideal (flatten entries presentation C);
    return R/I)

anomalous = (cplx) -> (
    K = prune HH_1 cplx;
    return cokernel (C**presentation(K)))
\end{verbatim}

Having $C$ and $K$, it is now easy to compute the Ext-group (or its rank) as follows. It is important to note that \verb!Ext^1! would compute a graded Ext-group for $K$ and $C$ as graded modules over the graded ring $C$ (with the grading coming from the grading of $W_2$ where $w_i$ has degree $i$). As explained in Section~\ref{sec:koszul-homology}, $K$ is actually the shift of the first homology of the Koszul complex by one. The command \verb!basis(-1,Ext^1(trim K,C))! takes that into account, actually computing a basis of the Ext-group that classifies degree 0 extensions of $C$ by a shifted copy of $K$.

\begin{verbatim}
rankExt = (k,l) -> (
  cplx = kosz(k,l);
  C = charsubring(cplx);
  K = anomalous(cplx);
  return #(transpose(entries basis(-1,Ext^1(trim K,C)))))
\end{verbatim}

Finally, some mystery code to compute information pertaining to the rank of the differential of the Koszul complex in a specified degree:

\begin{verbatim}
diffrank = (cplx, deg) -> (
  diffmat = cplx.dd_deg;
  baselist = apply(fold((a,b)->a**b, apply(degrees cplx#deg, 
    x->{0}|(flatten entries basis(-x-{1},C)))), deepSplice @@ toList);
  basemat = transpose(matrix(baselist));
  return basis(image(diffmat*basemat)))    
\end{verbatim}

\section{Basics on Ext-groups}
\label{sec:ext-basics}

In the following section, we recall the standard basics on Ext-groups and how they relate to extensions, such as they can be found e.g. in \cite{weibel}. This is to support our discussion in Section~\ref{sec:ext-class} (where we show triviality of Ext-groups for $k=3$) and Section~\ref{sec:beyondk3} (where we exhibit examples of nontrivial Ext-groups in some $k>3$ cases). We point out that the discussion below is for rings and modules, while our application to the oriented Grassmannians is actually about \emph{graded} modules over the \emph{graded} ring $C$. 

Fix a ring $R$. Recall that for two $R$-modules $A$ and $B$, the Ext-groups ${\rm Ext}^i_R(A,B)={\rm RHom}^i_R(A,B)$ can be computed as the cohomology groups of the complex ${\rm Hom}_R(P_\bullet,B)$ where $P_\bullet\to A$ is a chosen projective resolution (in the category of $R$-modules).

\begin{remark}  
	For an $R$-module $K$, the start of a projective resolution looks like
	\[\cdots\to P_2\to P_1\to P_0\to K\to 0.\]
	We can choose $P_0$ to be the free $R$-module on a chosen set of $R$-generators of $K$, with $P_0\to K$ mapping the elements corresponding to the generators to the respective generators in $K$. Then we can choose $P_1$ to be the free $R$-module on a chosen set of $R$-relations between the generators, with the morphism $P_1\to P_0$ mapping the generator corresponding to a relation in $K$ to ``itself'', written out in terms of the generators of $P_0$ (which correspond to the generators of $K$). Then we choose $P_2$ to be the free $R$-module on relations between relations, and so on. Given the resolution, we can compute ${\rm Ext}^1_R(K,C)$ as the first cohomology of the complex
	\[0\to {\rm Hom}_R(P_0,C)\to {\rm Hom}_R(P_1,C)\to {\rm Hom}_R(P_2,C)\to \cdots\]
	The differentials for this complex are induced by composition $P_{i+1}\to P_i\to C$.
	
	In terms of the above description of a resolution in terms of generators and (higher) relations of $K$, a class in ${\rm Ext}^1(K,C)$ is described by a choice of elements $r_1,\dots,r_m$ of $C$ corresponding to the relations in $K$. The cycle condition translates into the requirement that these elements satisfy the ``relations between relations'' in $K$. The Ext-class is a coboundary if there is a choice of elements $g_1,\dots,g_n$ of $C$ corresponding to the generators of $K$, such that $r_1,\dots,r_m$ are actually the relations of $K$ written in terms of the $g_1,\dots,g_n$.
\end{remark}

We also briefly recall that ${\rm Ext}^1(A,B)$ classifies equivalence classes of extensions
\[0\to B\to E\to A\to 0\]
of $R$-modules. The class $[E]\in {\rm Ext}^1(A,B)$ associated to the extension is given as the image of ${\rm id}_A$ under the boundary map $\partial\colon {\rm Hom}_R(A,A)\to {\rm Ext}^1(A,B)$ in the long exact Ext-group sequence associated to the given extension.

\begin{remark}
	The boundary map can be computed as usual in the exact sequence of complexes
	\[
	0\to {\rm Hom}_R(P_\bullet,B)\to {\rm Hom}_R(P_\bullet,E)\to {\rm Hom}_R(P_\bullet,A)\to 0.
	\]
	The class of ${\rm id}_A$ in ${\rm Hom}_R(A,A)={\rm Ext}^0_R(A,A)$ is represented by the map $P_0\to A$ in the projective resolution of $A$. Lift this to ${\rm Hom}_R(P_0,E)$ by lifting the generators of $A$ to $E$. Apply the boundary map ${\rm Hom}_R(P_0,E)\to {\rm Hom}_R(P_1,E)$ by writing out the relations in the presentation of $A$ in terms of the lifts of generators to $E$. By construction, the composition $P_1\to E\to A$ will be 0, so that the morphism $P_1\to E$ is in the image of ${\rm Hom}_R(P_1,B)\to {\rm Hom}_R(P_1,E)$. The resulting 1-cochain represents $[E]\in {\rm Ext}^1(A,B)$. In particular, the Ext-class can be computed explicitly from the $R$-module structure of $E$ and an $R$-module presentation of $A$.
\end{remark}

From the class $[E]\in {\rm Ext}^1(A,B)$, we can indeed completely recover the extension of $A$ by $B$. Starting from the extension $0\to M\to P_0\to A\to 0$ arising from a projective resolution of $A$, we obtain an exact sequence
\[
{\rm Hom}_R(P_0,B)\to {\rm Hom}_R(M,B)\to {\rm Ext}^1_R(A,B)\to 0
\]
by applying ${\rm Ext}^\bullet_R(-,B)$ (and noting that $P_0$ is projective). We choose a preimage $e\colon M\to B$ of $[E]\in {\rm Ext}^1_R(A,B)$ and obtain an extension
\[0\to B\to P_0\cup_M B\to A\to 0
\]
by pushout of $M\to P_0$ along $e\colon M\to B$.

\begin{remark}
	In the concrete situation of the cohomology rings of oriented Grassmannians, we can get an explicit presentation for the cohomology ring $H\cong {\rm H}^*(\widetilde{\rm Gr}(k,n),\mathbb{F}_2)$ as a $C$-module from the Ext-class $[H]\in {\rm Ext}^1_C(K,C)$. We again take the free resolutions starting with a presentation of $K$ as $C$-module. This provides an injection $P_1/P_2=M\hookrightarrow P_0$, where $M$ is the submodule of relations in $P_0$. The Ext-class $[H]$ is represented by a 1-cocycle $e\colon P_1\to C$, where the cocycle property means that the 1-cocycle factors through a morphism $P_1/P_2\to C$. Then $H\cong P_0\cup_{P_1/P_2} C$. Spelling this out, we can present $H$ as a $C$-module generated by 1 and the generators $g_1,\dots,g_m$ of $K$, and for each relation $R$ in $K$, we have a relation identifying the relation spelled out in terms $g_1,\dots,g_m$ with the image of the corresponding relation element under the map $e\colon P_1\to C$.
\end{remark}

\end{document}